\definecolor{blau}{rgb}{0,0,0.75} %color for in-document links
\newtheorem{theorem}{Theorem}
\newtheorem{lemma}{Lemma}
\newtheorem{defi}{Definition}
\newtheorem{prop}{Proposition}
\newtheorem{coroll}{Corollary}
\theoremstyle{definition}
\newtheorem{remark}{Remark}
\newtheorem{example}{Example}
\newtheorem{urn}{Urn}
\newcommand{\fallfak}[2]{\ensuremath{#1^{\underline{#2}}}}
\newcommand{\auffak}[2]{\ensuremath{#1^{\overline{#2}}}}
\newcommand{\Stir}[2]{\genfrac{ \{ }{ \} }{0pt}{}{#1}{#2}}
\newcommand{\stir}[2]{\genfrac{ [ }{ ] }{0pt}{}{#1}{#2}}
\newcommand{\ith}[1]{\ensuremath{ {#1}\textsuperscript{th}}}
\newcommand{\rec}{\textsc{Rect}}
\newcommand{\gport}{\textsc{Gport}}
\newcommand{\dinc}{\ensuremath{d}\textsc{-Inct}}
\newcommand{\Cc}{\ensuremath{\frac{c_2}{c_1}}}
\newcommand\gf{\varphi}
\newcommand{\N}{\ensuremath{\mathbb{N}}}
\newcommand{\R}{\ensuremath{\mathbb{R}}}
\DeclareMathOperator{\MPo}{\text{MPo}}
\DeclareMathOperator{\Po}{\text{Po}}
\newcommand{\law}{\ensuremath{\stackrel{\mathcal{L}}=}}
\newcommand{\claw}{\ensuremath{\xrightarrow{\mathcal{L}}}}
\def\P{{\mathbb {P}}}
\def\E{{\mathbb {E}}}
\newcommand\X{\ensuremath{X_{n,\ell}}}
\author[M.~Kuba]{Markus Kuba}
\address{Markus Kuba\\
Institute of Applied Mathematics and Natural Sciences\\
University of Applied Sciences - Technikum Wien\\
H\"ochst\"adtplatz 5, 1200 Wien} %
\email{kuba@technikum-wien.at}
\author[A.~Panholzer]{Alois Panholzer}
\address{Alois Panholzer\\
Institut f{\"u}r Diskrete Mathematik und Geometrie\\
Technische Universit\"at Wien\\
Wiedner Hauptstr. 8-10/104\\
1040 Wien, Austria} \email{Alois.Panholzer@tuwien.ac.at}
\thanks{The second author was supported by the Austrian Science Foundation FWF, grant P25337-N23.}
\title[Moment sequences and mixed Poisson distributions]{On moment sequences and mixed Poisson distributions}
\keywords{Mixed Poisson distribution, Factorial moments, Stirling transform, Limiting distributions, Urn models, Parking functions, Record-subtrees}%
\subjclass[2000]{60C05 } %
\begin{document}

\begin{abstract}
In this article we survey properties of mixed Poisson distributions and probabilistic aspects of the Stirling transform: given a non-negative random variable $X$ with moment sequence $(\mu_s)_{s\in\N}$ we determine
a discrete random variable $Y$, whose moment sequence is given by the Stirling transform of the sequence $(\mu_s)_{s\in\N}$, 
and identify the distribution as a mixed Poisson distribution. We discuss properties of this family of distributions and present a simple limit theorem based on expansions of factorial moments instead of power moments.
Moreover, we present several examples of mixed Poisson distributions in the analysis of random discrete structures, unifying and extending earlier results. 
We also add several entirely new results: we analyse triangular urn models, where the initial configuration or the dimension of the urn is not fixed, but may depend on the discrete time $n$. We discuss the branching structure of plane recursive trees and its relation to
table sizes in the Chinese restaurant process. Furthermore, we discuss root isolation procedures in Cayley-trees, a parameter in parking functions, zero contacts in lattice paths consisting of bridges, and a parameter related to cyclic points and trees in graphs of random mappings, all leading to mixed Poisson-Rayleigh distributions. Finally, we indicate how mixed Poisson distributions naturally arise in the critical composition scheme of Analytic Combinatorics.
\end{abstract}

\date{\today}
\maketitle

\section{Introduction}
In combinatorics the \emph{Stirling transform} %$\mathcal{S}[(a_s)_{s\in\N}]$ 
of a given sequence $(a_s)_{s\in\N}$, see~\cite{Sloane,Weisstein}, is the sequence 
$(b_s)_{s\in\N}$, with elements given by
\begin{equation}
\label{MOMSEQ-eqn1}
b_s=\sum_{k=1}^{s}\Stir{s}{k}a_k,\quad s\ge 1.
\end{equation}
The inverse Stirling transform of the sequence $(b_s)_{s\in\N}$
is obtained as follows:
\begin{equation}
\label{MOMSEQ-eqn1b}
a_s=\sum_{k=1}^{s}(-1)^{s-k}\stir{s}{k}b_k,\quad s\ge 1.
\end{equation}
Here $\Stir{s}{k}$ denote the Stirling numbers of the second kind, counting the number of ways to partition a set
of $s$ objects into $k$ non-empty subsets, see~\cite{Stanley} or~\cite{Concrete}, and $\stir{s}{k}$ denotes the unsigned Stirling numbers of the first kind, counting
the number of permutations of $s$ elements with $k$ cycles~\cite{Concrete}. These numbers appear as coefficients in the expansions
\begin{equation}
\label{MOMSEQ-eqn1c}
x^s=\sum_{k=0}^{s}\Stir{s}k \fallfak{x}k,\qquad \fallfak{x}s=\sum_{k=0}^{s}(-1)^{s-k}\stir{s}k x^k,\qquad 
\end{equation}
relating ordinary powers $x^s$ to the so-called falling factorials $\fallfak{x}{s}=x(x-1)\dots(x-(s-1))$, $s\in\N_0$. On the level of exponential generating functions $A(z)=\sum_{s\ge 1}a_s z^s/s!$ and $B(z)=\sum_{s\ge 1}b_s z^s/s!$, the Stirling transform and the relations~\eqref{MOMSEQ-eqn1} and~\eqref{MOMSEQ-eqn1b} turn into 
\begin{equation}
\label{MOMSEQ-eqn1d}
B(z)=A\big(e^{z}-1\big),\quad A(z)=B\big(\log(1+z)\big).
\end{equation}
This definition is readily generalized: given a sequence $(a_s)_{s\in\N}$ the generalized Stirling transform with parameter $\rho>0$ is the sequence 
$(b_s)_{s\in\N}$ with
\begin{equation}
\label{MOMSEQ-eqnNEU1}
b_s=\sum_{k=1}^{s}\rho^k\Stir{s}{k}a_k,\qquad\text{such that } a_s=\frac{1}{\rho^s}\sum_{k=1}^{s}(-1)^{s-k}\stir{s}{k}b_k,\quad s\ge 1.
\end{equation}
On the level of exponential generating functions: $B(z)=A\big(\rho(e^{z}-1)\big)$ and $A(z)=B\big(\log(1+\frac{z}{\rho})\big)$.
The aim of this work is to discuss several probabilistic aspects of a generalized Stirling transform with parameter $\rho>0$ in connection with moment sequences and \emph{mixed Poisson distributions}, pointing out applications in the analysis of random discrete structures. Given a non-negative random variable $X$ with power moments $\E(X^s)=\mu_s\in\R^+$, $s\ge 1$, we study the properties of another random variable $Y$, given its sequence of \emph{factorial moments}
$\E(\fallfak{Y}{s})=\E\big(Y(Y-1)\dots(Y-(s-1))\big)$, which are determined by the moments of $X$,
\begin{equation}
\label{MOMSEQ-eqn2}
\E(\fallfak{Y}{s})=\rho^s\E(X^s)=\rho^s\mu_s,\quad s\ge 1,
\end{equation}
where $\rho>0$ denotes an auxiliary scale parameter. Moreover, we discuss relations between the moment generating functions 
$\psi(z)=\E(e^{zX})$ and $\varphi(z)=\E(e^{zY})$ of $X$ and $Y$, respectively. 

\subsection{Motivation}
Our main motivation to study random variables with a given sequence of factorial moments~\eqref{MOMSEQ-eqn2} stems from the analysis of combinatorial structures. In many cases, amongst others the analysis of inversions in labelled tree families~\cite{PanSeitz2011}, stopping times in urn models~\cite{PanKuCPC,PanKuAdvances}, 
node degrees in increasing trees~\cite{KubPan2007}, block sizes in $k$-Stirling permutations~\cite{PanKuCPC}, descendants in increasing trees~\cite{Desc-KubPan2005}, 
ancestors and descendants in evolving k-tree models~\cite{PanSeitz2012}, pairs of random variables $X$ and $Y$ arise as limiting distributions for certain parameters of interest associated to the combinatorial structures. The random variable $X$ can usually be determined via its (power) moment sequence $(\mu_s)_{s\in\N}$, and the random variable $Y$ in terms of the sequence of factorial moments satisfying relation~\eqref{MOMSEQ-eqn2}. An open problem was to understand in more detail the nature of the random variable $Y$. In~\cite{PanKuAdvances,PanSeitz2011} a few results
in this direction were obtained. The goal of this work is twofold: first, to survey the properties of mixed Poisson distributions, and second to discuss their appearances in combinatorics and the analysis of random discrete structures, complementing existing results; in fact, we will add several entirely new results. It will turn out that the identification of the distribution of $Y$ can be directly solved using mixed Poisson distributions, which are widely used in applied probability theory, see for example~\cite{Ferrari,Karlis,Masse,Neyman,Willmot}. In the analysis of random discrete structures mixed Poisson distributions have been used mainly in the context of Poisson approximation, see e.g.~\cite{Gruebi2005}. In this work we point out the appearance of mixed Poisson distributions as a genuine limiting distribution, and also present closely related phase transitions. In particular, we discuss natural occurrences of mixed Poisson distributions in urn models of a \emph{non-standard nature} - either the size of the urn, or the initial conditions are allowed to depend on the discrete time.

\subsection{Notation and terminology}
We denote with $\R^+$ the non-negative real numbers. Here and throughout this work we use the notation $\fallfak{x}{s}=x(x-1)\dots(x-(s-1))$ for the falling factorials, and $\auffak{x}s=x(x+1)\dots(x+s-1)$ for the rising factorials.\footnote{The notation $\fallfak{x}{s}$ and $\auffak{x}s$ was introduced and popularized by Knuth; alternative notations for the falling factorials 
include the Pochhammer symbol $(x)_s$, which is unfortunately sometimes also used for the rising factorials.}
Moreover, we denote with $\Stir{s}{k}$ the Stirling numbers of the second kind. We use the notation $U \law V$ for the equality in distribution
of random variables $U$ and $V$, and $U_{n} \claw V$ denotes the converge in distribution of a sequence of random variables $U_{n}$ to $V$. 
The indicator variable of the event $A$ is denoted by $\boldsymbol{1}_{A}$.
Throughout this work the term ``convergence of all moments'' of a sequence of random variables refers exclusively to the convergence of all non-negative integer moments. 
Given a formal power series $f(z)=\sum_{n\ge 0} a_n z^n$ we denote with $[z^n]$ the extraction of coefficients operator: $[z^n]f(z)=a_n$. 
Furthermore, we denote with $E_{v}$ the evaluation operator of the variable $v$ to the value $v=1$, and with $D_v$ the differential operator with respect to $v$. 

\subsection{Plan of the paper}
In the next section we state the definition of mixed Poisson distributions and discuss its properties. In Section~\ref{secApplications} we collect several examples from the literature,
unifying and extending earlier results. 
Furthermore, in Section~\ref{MOMSEQtriangular} we present a novel approach to balanced triangular urn models and its relation to mixed Poisson distributions. Section~\ref{RayPoi} is devoted to new results concerning mixed Poisson distributions with Rayleigh mixing distribution;
in particular, we discuss node isolation in Cayley-trees, zero contacts in directed lattice paths, and also cyclic points in random mappings. Finally, in Section~\ref{SecMMPD} we discuss multivariate mixed Poisson distributions. 

\section{Moment sequences and mixed Poisson distributions\label{secMixedPoisson}}
\subsection{Discrete distributions and factorial moments}
In order to obtain a random variable $Y$ with a prescribed sequence of factorial moments, given according to Equation~\eqref{MOMSEQ-eqn2} by $\E(\fallfak{Y}{s})=\rho^s\mu_s$, a first \emph{ansatz} would be the following. 
Let $Y$ denote a discrete random variable with support the non-negative integers, 
and $p(v)$ its probability generating function, $$p(v)=\E(v^{Y})=\sum_{\ell\ge 0}\P\{Y=\ell\}v^{\ell}.$$ 
The factorial moments of $Y$ can be obtained from the probability generating function by repeated differentiation, 
\begin{equation}
\label{MOMSEQFactMomDiff}
\E(\fallfak{Y}{s})=\sum_{\ell\ge 0}\fallfak{\ell}s \, \P\{Y=\ell\}=E_{v}D_v^s p(v),\quad s\ge 0.
\end{equation}
Consequently, we can describe the probability mass function of the random variable $Y$ as follows:
\[
p(v)=\sum_{s\ge 0}\E(\fallfak{Y}{s})\frac{(v-1)^s}{s!} = \sum_{s\ge 0}(v-1)^s \frac{\mu_s\rho^s}{s!} = \sum_{\ell \ge 0} v^{\ell} \sum_{s\ge \ell}\binom{s}{\ell}(-1)^{s-\ell}\frac{\mu_s\rho^s}{s!}.
\]
This implies that
\begin{equation}
\label{MOMSEQmomentexpansion}
\P\{Y=\ell\}=[v^\ell]p(v)=\sum_{s\ge \ell}\binom{s}{\ell}(-1)^{s-\ell}\frac{\mu_s\rho^s}{s!},\quad \ell\ge 0.
\end{equation}
Up to now the calculations have been purely symbolic, no convergence issues have been addressed. In order to put the calculations above on solid grounds, and to identify the distribution, we discuss mixed Poisson distributions 
and their properties in the next subsection.

\subsection{Properties of mixed Poisson distributions}
\begin{defi}
\label{MOMSEQdef1}
Let $X$ denote a non-negative random variable, with cumulative distribution function $\Lambda(.)$,
then the discrete random variable $Y$ with probability mass function given by 
\[
\P\{Y=\ell\}=\frac{1}{\ell!}\int_{\R^{+}} X^{\ell}e^{-X} d\Lambda, \quad \ell\ge 0,
\]
has a mixed Poisson distribution with mixing distribution $X$, in symbol $Y\law \MPo(X)$.
\end{defi}
The boundary case $X\law 0$ leads to a degenerate distribution with all mass concentrated at zero.  A more compact notation for the probability mass function of $Y$ is sometimes used instead of the one given above, namely
$\P\{Y=\ell\}=\frac{1}{\ell!}\E( X^{\ell}e^{-X})$. One often encounters a slightly different definition, which includes a scale parameter $\rho\ge 0$:
\[
\P\{Y=\ell\}=\frac{\rho^\ell}{\ell!}\int_{\R^{+}} X^{\ell}e^{-\rho X} d\Lambda, \quad \ell\ge 0,
\]
or $\P\{Y=\ell\} = \frac{\rho^{\ell}}{\ell!}\E( X^{\ell}e^{-\rho X})$.
This corresponds to a scaling of the mixing distribution, $Y\law \MPo(\rho X)$. Here and throughout this work 
we call $Y\law \MPo(\rho X)$ a mixed Poisson distributed random variable with mixing distribution $X$ and scale parameter $\rho$.

\begin{example}
The ordinary Poisson distribution $Y\law \text{Po}(\rho)$ with parameter $\rho>0$,
\[
\P\{Y=\ell\}=\frac{\rho^\ell}{\ell!}e^{-\rho},\quad \ell\ge 0,
\]
arises as a mixed Poisson distribution with degenerate mixing distribution $X\law 1$.
\end{example}

\begin{example}
The negative binomial distribution $Y\law \text{NegBin}(r,p)$ with parameters $p\in[0,1)$ and $r>0$,
\[
\P\{Y=\ell\}=\binom{\ell+r-1}\ell p^\ell(1-p)^r,\quad \ell\ge 0,
\]
arises as a mixed Poisson distribution with a Gamma mixing distribution $X\law \text{Gamma}(r, \theta)$ scaled by $\rho\ge 0$, such that the parameters $\theta$ and $\rho$ satisfy $\theta\cdot \rho = p/(1-p)$. 
In particular, for $\theta=1$ the parameter $p$ is given by $p=\rho/(1+\rho)$.
A special instance of this class of distributions is the geometric distribution $\text{Geom}(p)=\text{NegBin}(1,p)$. 

Note that a Gamma distributed r.v.\ $X\law \text{Gamma}(r, \theta)$ has the probability density function
\begin{equation*}
  f(x; r,\theta) = \frac{x^{r-1} e^{-\frac{x}{\theta}}}{\theta^{r} \Gamma(r)}, \quad x > 0.
\end{equation*}
\end{example}

\begin{example}
\label{ExampleRayleigh}
A Rayleigh distributed r.v.\ $X \law \text{Rayleigh}(\sigma)$ with parameter $\sigma$ has the probability density function
\begin{equation*}
    f(x;\sigma) = \frac{x}{\sigma^{2}} e^{-\frac{x^{2}}{2\sigma^{2}}}, \quad x \geq 0, 
\end{equation*}
and is fully characterized by its (power) moment sequence:
\begin{equation*}
  \mathbb{E}(X^{s}) = \sigma^{s} \, 2^{\frac{s}{2}} \,\Gamma\left(\frac{s}{2}+1\right).
\end{equation*}
A discrete random variable $Y$ with probability mass function
\[
\P\{Y=\ell\}=\frac{\rho^\ell}{\ell!}\int_0^\infty x^{\ell+1}e^{-\rho x -\frac{x^2}2}dx,\quad \ell\ge 0,
\]
arises as a mixed Poisson distribution  $Y\law \MPo(\rho X)$ with mixing distribution $X\law \text{Rayleigh}(1)$ 
and scale parameter $\rho$. We call $Y$ a Poisson-Rayleigh distribution with parameter $\rho$. 
Note that for $\rho<1$ we can expand $e^{-\rho x}$ and obtain a series representation of $\P\{Y=\ell\}$. 
Another representation valid for all $\rho>0$ can be stated in terms of the incomplete gamma function $\Gamma(s,x) = \int_x^{\infty} t^{s-1}\,e^{-t}\,{\rm d}t$:
\[
\P\{Y=\ell\}=\frac{\rho^\ell}{\ell!}e^{\frac{\rho^2}2}\sum_{i=0}^{\ell+1}\binom{\ell+1}i (-\rho)^{\ell+1-i} \, 2^{\frac{i-1}2} \, \Gamma\big(\frac{i+1}2,\frac{\rho^2}2\big).
\]
\end{example}

\begin{example}
The Neyman Type $A$ Distribution is a discrete probability distribution often used in biology and ecology~\cite{Masse,Neyman}.
It is a mixed Poisson distribution with mixing distribution $X\law \Po(\lambda)$ given by an (ordinary) Poisson distribution with parameter $\lambda$, scaled by $\rho$:
\[
\P\{Y=\ell\}=\frac{\rho^{\ell}}{\ell!}\sum_{m\ge 0}m^{\ell}e^{-\rho m} \Big(e^{-\lambda} \frac{\lambda^m}{m!}\Big)
= \frac{\rho^{\ell}}{\ell!} e^{-\lambda+\lambda e^{-\rho}}\sum_{j=0}^{\ell}\Stir{\ell}{j}\big(\lambda e^{-\rho }\big)^{j}.
\]
\end{example} 
For a very comprehensive list of examples of mixed Poisson distributions we refer the reader to the article of Willmot~\cite{Willmot}.
Since by~\eqref{MOMSEQ-eqn1c} the factorial moments $\E(\fallfak{Y}{s})$ are related to the ordinary moments in terms of the Stirling numbers of the second kind,
the moment sequence of $Y$ is the (scaled) Stirling transform of the moment sequence of $X$. Next we collect similar basic properties of mixed Poisson distributions.

\begin{prop}
\label{MOMSEQPropEigenschaften}
Let $Y\law \MPo(\rho X)$ denote a mixed Poisson distributed random variable with mixing distribution $X$ and scale parameter $\rho>0$. 
\begin{itemize}
	\item[(a)] The factorial moments of $Y$ are given by the scaled power moments of its mixing distribution,	$\E(\fallfak{Y}{s})=\rho^s\E(X^s)$, $s\ge 1$.
	\item[(b)] The power moments of $Y$ and $X$ are related by the generalized Stirling transform with parameter $\rho$, and its inverse, respectively:
\[
\E(Y^s)=\sum_{j=0}^{s}\Stir{s}{j}\rho^j\E(X^j),\quad \E(X^s)=\frac{1}{\rho^s}\sum_{j=0}^{s}(-1)^{s-j}\stir{s}{j}\E(Y^j).
\]
Similarly, the cumulants of $Y$ and $X$ are related by the generalized Stirling transform with parameter $\rho$, and its inverse, respectively.
	\item[(c)] The moment generating functions $\varphi(z)=\E(e^{zY})$ and $\psi(z)=\E(e^{zX})$ are related by the (generalized) Stirling transform of functions and its inverse, respectively:
\begin{equation}
\label{MOMSEQeqn2}
\varphi(z)=\psi\Big(\rho (e^{z}-1)\Big),\qquad \quad \psi(z)=\varphi\Big(\log\big(1+\frac{z}{\rho}\big)\Big).
\end{equation}
	\item[(d)] Let $Y_1\law \MPo(\rho_1X_1)$ and $Y_2\law \MPo(\rho_2X_2)$ denote two independent mixed Poisson distributed random variables. Then, the sum $Y=Y_1\oplus Y_2$ is again mixed Poisson distributed,
\[
 Y \law \MPo(\rho_1X_1\oplus \rho_2X_2).
\]
\end{itemize}
\end{prop}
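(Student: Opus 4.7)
The plan is to base all four parts on the elementary observation that, by the form of the probability mass function in Definition~\ref{MOMSEQdef1}, the conditional law of $Y$ given $X$ is the ordinary Poisson distribution with parameter $\rho X$; that is, $Y\mid X\law\Po(\rho X)$. Once this is set, each of (a)--(d) reduces to a known property of the ordinary Poisson law combined with the tower property $\E(f(Y))=\E\bigl(\E(f(Y)\mid X)\bigr)$.

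For (a), I would use that the $s$-th factorial moment of $\Po(\lambda)$ equals $\lambda^s$, so $\E(\fallfak{Y}{s}\mid X)=(\rho X)^s$, and taking expectations yields $\E(\fallfak{Y}{s})=\rho^s\E(X^s)$. Part (c) is equally direct: since the moment generating function of $\Po(\lambda)$ equals $\exp(\lambda(e^z-1))$, one obtains
\[
\varphi(z)=\E\bigl(\E(e^{zY}\mid X)\bigr)=\E\bigl(e^{\rho X(e^z-1)}\bigr)=\psi\bigl(\rho(e^z-1)\bigr),
\]
and the inverse relation follows by substituting $w=\rho(e^z-1)$, i.e.\ $z=\log(1+w/\rho)$. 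For (d), I would condition on the pair $(X_1,X_2)$: by independence of $Y_1$ and $Y_2$ the conditional laws are independent Poissons, and sums of independent Poissons add parameters, so $Y_1\oplus Y_2\mid(X_1,X_2)\law\Po(\rho_1X_1+\rho_2X_2)$, which is precisely the defining property of $\MPo(\rho_1X_1\oplus\rho_2X_2)$.

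Part (b) then follows by combining (a) and (c). The power-moment identity is a direct application of the expansion $x^s=\sum_{j=0}^{s}\Stir{s}{j}\fallfak{x}{j}$ from~\eqref{MOMSEQ-eqn1c} to $x=Y$, together with (a); the inverse is the companion falling-factorial identity. For the cumulants, taking logarithms in (c) gives $\log\varphi(z)=\log\psi(\rho(e^z-1))$, and since both cumulant generating functions vanish at the origin, they are honest exponential generating functions of the cumulant sequences; the claim then follows by comparison with the functional form of the generalized Stirling transform in~\eqref{MOMSEQ-eqnNEU1}. I expect no genuine obstacle in the proof; the only subtlety is analytic, requiring that $\psi$, $\varphi$, and their logarithms are defined in a neighbourhood of the origin (or, if only moments of finite order are available, that the moment and cumulant identities be read as formal power series relations, in which case (a) already provides an unconditional justification).
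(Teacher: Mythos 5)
Your proof is correct and is essentially the paper's argument in disguise: the paper's direct manipulations of $\sum_{\ell\ge s}\fallfak{\ell}{s}\,\frac{\rho^\ell}{\ell!}\E\big(X^{\ell}e^{-\rho X}\big)$ for (a) and of $\E(e^{zY})$ for (c) are exactly what one obtains by unfolding your tower-property computations for $Y\mid X\law\Po(\rho X)$, and (b) is deduced from (a) and (c) in the same way. The one place you add content is (d), which the paper disposes of by citing Johnson, Kotz and Kemp, whereas your conditioning on $(X_1,X_2)$ plus additivity of independent Poisson laws is a complete proof — provided one reads ``independence of $Y_1$ and $Y_2$'' as independence of the underlying pairs $(X_i,Y_i)$, which is the convention the $\oplus$ notation for the mixing distributions presupposes.
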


\smallskip

\begin{proof}
(a) First we derive the factorial moments $\E(\fallfak{Y}{s})=\E\big(Y(Y-1)\dots(Y-s+1)\big)$ of $Y$ by a direct computation:
\begin{equation*}
\begin{split}
\E(\fallfak{Y}{s})&=\sum_{\ell\ge 0}\fallfak{\ell}{s} \, \P\{Y=\ell\}
= \sum_{\ell\ge s}\fallfak{\ell}{s} \, \frac{\rho^\ell}{\ell!}\int_{\R^+} X^{\ell}e^{-\rho X} d\Lambda\\
%= \sum_{\ell\ge s}\fallfak{\ell}{s}\frac{\rho^\ell}{\ell!}\E\big(X^{\ell}e^{-\rho X}\big)
&=\rho^s\sum_{\ell\ge s}\frac{\rho^{\ell-s}}{(\ell-s)!}\E\big(X^{\ell}e^{-\rho X}\big)
=\rho^s \E\Big(X^s e^{-\rho X}\sum_{\ell\ge 0}\frac{\rho^{\ell}X^\ell}{\ell!}\Big)
%=\rho^s\E(X^s )
=\rho^s\E(X^s).
\end{split}
\end{equation*}
(b) By converting $\fallfak{Y}{s}$ into ordinary powers~\eqref{MOMSEQ-eqn1c} the sequence of ordinary power moments $(\E(Y^s))_{s\in\N}$ of a mixed Poisson distributed random variable $Y$ is given by the Stirling transform of the moments of the mixing distribution in the following way:
\begin{equation}
\label{MOMSEQeqn1}
 %(\E(Y^s))_{s\in\N}=\mathcal{S}[(\mu_s)_{s\in\N}],\qquad 
 \E(Y^s)=\E\Big(\sum_{j=0}^{s}\Stir{s}{j}\fallfak{Y}{j}\Big)=\sum_{j=0}^{s}\Stir{s}{j}\E(\fallfak{Y}{j})
 =\sum_{j=0}^{s}\Stir{s}{j}\rho^j\E(X^j),\quad s\ge 1.
\end{equation}

The result concerning the moment generating function in (c) can be shown similar to~\eqref{MOMSEQ-eqn1d} by directly computing $\E(e^{zY})$, interchanging integration and summation:
\[
\E(e^{zY})= \sum_{\ell\ge 0}e^{z\ell}\P\{Y=\ell\}=\int_{\R^{+}} \sum_{\ell\ge 0} \frac{\big(e^{z}\rho X\big)^{\ell}}{\ell!}e^{-\rho X} d\Lambda
=\int_{\R^{+}}e^{\rho(e^{z}-1) X} d\Lambda.
\]
By definition, the latter expression is exactly $\psi\Big(\rho (e^{z}-1)\Big)$, where $\psi(z)=\E(e^{zX})$ denotes the moment generating function of the mixing distribution $X$. 
If the cumulative distribution function of $X$ is not known, we can compute the moment generating function $\varphi(z)$ of $Y$ utilizing only the moment sequences:
\begin{equation*}
\begin{split}
\varphi(z)&=\E(e^{zY})= \sum_{s\ge 0}\E(Y^s)\frac{z^{s}}{s!}= \sum_{s\ge 0} \sum_{j=0}^{s}\Stir{s}{j}\rho^j \mu_{j}\frac{z^{s}}{s!}
=\sum_{j\ge 0 }\rho^j \mu_{j}\sum_{s\ge j}\Stir{s}{j}\frac{z^{s}}{s!}.
\end{split}
\end{equation*}
Using the bivariate generating function identity of the Stirling numbers of the second kind (see Wilf~\cite{Wilf})
\begin{equation}
\label{MOMSEQWilf}
\sum_{s\ge 0}\sum_{j\ge 0} \Stir{s}{j}\frac{z^{s}}{s!} u^{j} = e^{u(e^{z}-1)},
\end{equation}
we obtain further
\begin{equation*}
\varphi(z)= \sum_{j\ge 0}\mu_{j} \frac{\rho^j(e^{z}-1)^j}{j!}.
\end{equation*}
The latter expression is exactly the Stirling transform of $\psi(z)=\sum_{j\ge 0 }\mu_j\frac{z^j}{j!}$ - in other words, of the moment generating function of $X$ evaluated at $\rho(e^{z}-1)$.
The relation for the cumulants now follows readily from (c), since the cumulant generating functions $k_X(z)$ and $k_Y(z)$ of $X$ and $Y$ are given by
$k_X(z)=\log(\psi(z))$ and $k_Y(z)=\log(\varphi(z))$. For a proof of part (d) we refer the reader to Johnson, Kotz and Kemp~\cite{Johnson1992}.
\end{proof}

In the applied probability literature, see~\cite{Karlis,Willmot}, given $Y\law \MPo(\rho X )$ it is usually assumed that the cumulative distribution function of the mixing distribution of $X$ is known. However, in many cases in the analysis of random discrete structures the mixing distribution $X$ is solely determined by the sequence of moments $\E(X^s)=\mu_s\in\R^+$, $s\ge 1$. Hence, it is beneficial to express the probability mass function of a mixed Poisson distributed random variable solely in terms of the moments of $X$, justifying~\eqref{MOMSEQmomentexpansion}. Note that for specific mixed Poisson distributions different simpler formulas may exist (compare with Corollary~\ref{MOMSEQCorollDimUrns}).

\begin{prop}
\label{MOMSEQthe1}
Let $X$ denote a random variable with moment sequence given by $(\mu_s)_{s\in\N}$ such that $\psi(z)=\E(e^{zX})$ exists in a neighbourhood of zero, including the value $z=-\rho$.
A random variable $Y$ with factorial moments given by $\E(\fallfak{Y}s)=\rho^s\mu_s$ has a mixed Poisson distribution $Y\law \MPo(\rho X)$ with mixing distribution  $X$ and scale parameter $\rho>0$, and the sequence of power moments of $Y$ is the Stirling transform of the moment sequence $(\mu_s)_{s\in\N}$. The probability mass function of $Y$ is given by
\[
\P\{Y=\ell\}=\sum_{s\ge \ell}(-1)^{s-\ell}\binom{s}{\ell}\mu_{s}\frac{\rho^s}{s!}, \quad \ell\ge 0.
\]
\end{prop}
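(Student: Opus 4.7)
The plan is to identify the probability generating function $p(v) = \E(v^Y)$ of $Y$ with that of a $\MPo(\rho X)$ random variable, and then extract the probability mass function by coefficient comparison. The bulk of the work consists in lifting the agreement of factorial moments to an equality of distributions.

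First, I would unpack the hypothesis. The assumption that $\psi(z) = \E(e^{zX})$ exists on an open neighbourhood of the origin containing $z = -\rho$ means that $\psi$ is analytic on a disc around $0$ of radius strictly larger than $\rho$, and in particular that $\E(X^\ell e^{-\rho X}) < \infty$ for every $\ell \in \N$, so $\MPo(\rho X)$ is a bona fide probability distribution possessing moments of all orders. By Proposition~\ref{MOMSEQPropEigenschaften}(a), its factorial moments are exactly $\rho^s\mu_s$, matching those prescribed for $Y$, so $\MPo(\rho X)$ is the natural candidate for the law of $Y$.

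The central step is uniqueness. By Proposition~\ref{MOMSEQPropEigenschaften}(c), the moment generating function of $\MPo(\rho X)$ is $\varphi(z) = \psi(\rho(e^z - 1))$, which is analytic on a neighbourhood of $z = 0$ precisely because $\psi$ is analytic on a disc around $0$ of radius strictly greater than $\rho$. A distribution whose MGF is analytic in a neighbourhood of the origin is determined by its sequence of power moments; hence $\MPo(\rho X)$ is the unique distribution with moments $\sum_{j=0}^s\Stir{s}{j}\rho^j\mu_j$. Since the factorial moments of $Y$ determine its power moments via the Stirling transform~\eqref{MOMSEQ-eqn1c}, $Y$ and $\MPo(\rho X)$ must share all power moments, and therefore $Y \law \MPo(\rho X)$. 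I expect this invocation of moment-determinacy, and the verification that $\varphi$ really is analytic at the origin, to be the main subtlety of the argument.

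With the distributional identification in hand, the probability mass function follows by a routine calculation. Writing the PGF as
\[
p(v) = \psi\big(\rho(v-1)\big) = \sum_{s\geq 0} \mu_s \rho^s \frac{(v-1)^s}{s!},
\]
which is valid on a neighbourhood of $[0,1]$ by the same analyticity considerations, I would expand $(v-1)^s$ binomially, interchange the summations (legitimised by absolute convergence on this neighbourhood), and read off the coefficient of $v^\ell$ to obtain the stated formula for $\P\{Y=\ell\}$. The Stirling-transform statement for the ordinary moments of $Y$ is then inherited directly from Proposition~\ref{MOMSEQPropEigenschaften}(b), completing the proof.
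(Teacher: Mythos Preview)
Your argument is correct and follows essentially the same route as the paper: both establish the identification $Y\law\MPo(\rho X)$ via the analyticity of $\varphi(z)=\psi(\rho(e^{z}-1))$ near the origin and moment determinacy, and then extract the probability mass function from the series expansion of $\psi$ around $-\rho$. The only cosmetic difference is that the paper obtains $\P\{Y=\ell\}$ by differentiating $\psi$ at $z=-\rho$ (writing $\P\{Y=\ell\}=\frac{\rho^\ell}{\ell!}\big(D_z^\ell\psi(z)\big)\big|_{z=-\rho}$), whereas you read off the coefficient of $v^\ell$ in the PGF $p(v)=\psi(\rho(v-1))$; these are the same computation.
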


\begin{proof}
By our assumption on the existence of $\psi(z)$ in a neighbourhood of zero, it follows that $\varphi(z)$ is also analytic around $z=0$, and the random variable $Y$ is uniquely determined by its (factorial) moments. 
Consequently, $Y$ has a mixed Poisson distribution. Moreover, the probability mass function of $Y$ is obtained by
\begin{equation}
\begin{split}
\P\{Y=\ell\}&=\frac{\rho^\ell}{\ell!}\E( X^{\ell}e^{-\rho X})=\frac{\rho^\ell}{\ell!}\Big(D_z^{\ell}\psi(z)\Big)\Big|_{z=-\rho}
=\frac{\rho^\ell}{\ell!}\Big(D_z^{\ell}\sum_{s\ge 0 }z^s\frac{\mu_s}{s!}\Big)\Big|_{z=-\rho}\\
&=\frac{\rho^\ell}{\ell!}\sum_{s\ge \ell}(-\rho)^{s-\ell}\frac{\mu_s}{(s-\ell)!}=
\sum_{s\ge \ell}(-1)^{s-\ell}\binom{s}{\ell}\mu_{s}\frac{\rho^s}{s!}.
\end{split}
\end{equation}
Alternatively, the formula for the probability mass function can formally be obtained directly from the definition 
\[
\P\{Y=\ell\}=\frac{\rho^\ell}{\ell!}\int_0^{\infty} X^{\ell}e^{-\rho X} d\Lambda
=\frac{1}{\ell!}\int_0^{\infty}\sum_{s\ge 0}(-1)^{s}\frac{(\rho X)^{s+\ell}}{s!}d\Lambda.
\]
Interchanging summation and integration also leads to the stated result.
\end{proof}

\subsection{The method of moments and basic limit laws}
The method of moments is a classical way of deriving limit laws (see for example Hwang and Neininger~\cite{Hwang2002} and the references therein).
Given a sequence of random variables $(X_{n})_{n\in\N}$ one first derives asymptotic expansions of the power moments; 
assume that the moments satisfy the asymptotic expansion
\begin{equation}
\label{MOMSEQmmexpansion}
\E(X_n^s)=\lambda_n^s \cdot \mu_s\cdot (1 + o(1)),\quad s\ge 1,
\end{equation}
with $\lambda_n$ denoting non-negative scale parameters. Then, one considers the scaled random variables $\frac{X_n}{\lambda_n}$, and tries to prove convergence in distribution of $\frac{X_n}{\lambda_n}$  by using the Fr\'echet-Shohat moment convergence theorem~\cite{Loe1977}:
if the power moments of $\frac{X_n}{\lambda_n}$ converge to the moments $(\mu_s)_{s\in\N}$, and the moment sequence $(\mu_s)_{s\in\N}$ determines a unique non-degenerate distribution, then the random variable $\frac{X_n}{\lambda_n}$
converges in distribution to $X$. A well-known sufficient criterion for the uniqueness of the distribution of $X$ is Carleman's condition: 
the distribution of $X$ is uniquely determined if 
\begin{equation}
\label{MOMSEQmmCarleman}
\sum_{s\ge 1}(\mu_{2s})^{-\frac1{2s}}=\infty.
\end{equation}
Note that~\eqref{MOMSEQmmCarleman} is satisfied, whenever $\E(e^{zX})$ exists in a neighbourhood of zero. We obtain the following result concerning mixed Poisson distributions.
\begin{lemma}[Uniqueness of mixed Poisson distributions]
\label{MOMSEQLemmaUniqueness}
The moments of a mixed Poisson distributed random variable $Y\law \MPo(\rho X)$, with $\rho\ge 0$ and non-negative mixing distribution $X$, satisfy Carleman's criterion if and only if the moments of $X$ do so. 
Moreover, the moment generating function $\psi(z)=\E(e^{zX})$ exists in a neighbourhood of zero, if and only if $\varphi(z)=\E(e^{zY})$ 
exists in a neighbourhood of zero.
\end{lemma}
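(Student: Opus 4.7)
My plan is to treat the moment-generating-function and Carleman parts separately. For the MGF statement I will use the functional identity $\varphi(z)=\psi\bigl(\rho(e^z-1)\bigr)$ from Proposition~\ref{MOMSEQPropEigenschaften}(c): since $z\mapsto\rho(e^z-1)$ is entire, vanishes at $0$ and has non-vanishing derivative there, it is locally biholomorphic near $0$, and analyticity of $\psi$ in a disk around $0$ therefore transfers to $\varphi$. The reverse implication is symmetric, via the local inverse relation $\psi(w)=\varphi\bigl(\log(1+w/\rho)\bigr)$.

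For Carleman's criterion, one direction is immediate from $\fallfak{Y}{s}\le Y^s$, which gives $\rho^s\mu_s=\E\bigl(\fallfak{Y}{s}\bigr)\le\E(Y^s)$ and hence $\E(X^{2s})^{-1/(2s)}\ge\rho\,\E(Y^{2s})^{-1/(2s)}$, so Carleman for $Y$ implies Carleman for $X$. For the converse I need an upper bound on $\E(Y^s)$ in terms of the moments of $X$. Expanding $Y^s=\sum_{k=0}^s\Stir{s}{k}\fallfak{Y}{k}$ and invoking Lyapunov's inequality $\E(X^k)\le\E(X^s)^{k/s}$ for $k\le s$ yields $\E(Y^s)\le B_s\bigl(\rho\,\E(X^s)^{1/s}\bigr)$, where $B_s(\lambda)=\sum_k\Stir{s}{k}\lambda^k$ is Touchard's polynomial. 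A Cauchy coefficient estimate applied to $\exp(\lambda(e^z-1))$ supplies the standard Poisson-moment bound $B_s(\lambda)^{1/s}\le c(\lambda+s)$, giving $\E(Y^s)^{1/s}\le c\bigl(\rho\,\E(X^s)^{1/s}+s\bigr)$.

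Setting $M_t=\E(X^{2t})^{1/(2t)}$ (which is non-decreasing in $t$ by Lyapunov), the task reduces to the elementary claim that $\sum_t 1/M_t=\infty$ implies $\sum_t 1/(\rho M_t+2t)=\infty$. I partition $\N$ into $S_1=\{t:\rho M_t\ge 2t\}$ and its complement $S_2$: on $S_1$ one has $1/(\rho M_t+2t)\ge 1/(2\rho M_t)$, while on $S_2$ one has $1/(\rho M_t+2t)\ge 1/(4t)$. If the $S_1$-contribution already diverges we are done; otherwise $\sum_{S_2} 1/M_t=\infty$, and here the monotonicity of $M_t$ becomes decisive: on each maximal block $[a,b]\subseteq S_2$ abutting $S_1$ one has $M_t\in[2(a-1)/\rho,\,2t/\rho)$, so a block-by-block comparison forces $\sum_{S_2} 1/M_t$ and $\sum_{S_2} 1/t$ to converge or diverge together. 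This monotonicity-based block argument is the main technical obstacle I foresee.
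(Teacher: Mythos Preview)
Your proposal is correct and follows essentially the same route as the paper. The only difference is in how the moment bound $\E(Y^s)^{1/s}\lesssim \rho\,\mu_s^{1/s}+s$ is obtained: the paper splits $\sum_{\ell}\ell^s\,\P\{Y=\ell\}$ at $\ell=2s$ and uses $\ell^s\le 2^s\fallfak{\ell}{s}$ for $\ell\ge 2s$, arriving directly at $\E(Y^s)\le 4^s(1+\rho)^s\max\{s^s,\mu_s\}$, whereas you pass through the Stirling expansion, Lyapunov, and a Touchard-polynomial Cauchy estimate. Both give the same bound up to constants, and the subsequent series argument---partitioning indices according to whether $\rho M_t$ or $t$ dominates, then exploiting monotonicity of $M_t$ on maximal blocks to compare $\sum 1/M_t$ with $\sum 1/t$---is exactly the paper's argument (with your $S_1,S_2$ playing the roles of the paper's $I_2,I_1$).
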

\begin{proof}
Note first that the second part follows directly from Proposition~\ref{MOMSEQPropEigenschaften} part (c).
Assume now that the moments of $Y$ satisfy Carleman's condition. We observe that the moments $(\mu_s)_{s\in\N}$ of $X$ are bounded by the scaled power moments of $Y$, $\mu_s \le \frac{1}{\rho^s}\E(Y^s)=\sum_{j=0}^{s}\Stir{s}{j}\rho^{s-j}\mu_j$. 
Consequently, the distribution of $X$ is also uniquely determined by its moment sequence:
\[
\sum_{s\ge 1}(\mu_{2s})^{-\frac1{2s}} \ge \rho \sum_{s\ge 1}(\E(Y^{2s}))^{-\frac1{2s}}=\infty.
\]
Conversely, assume that the moments of $X$ satisfy Carleman's condition:
\[
\sum_{s\ge 1}(\mu_{2s})^{-\frac1{2s}}=\infty.
\]
The \ith{s} power moment of $Y$ can be estimated using the \ith{s} factorial moment of $Y$ in the following way
\begin{equation*}
\begin{split}
\E(Y^s)&=\sum_{\ell\ge 0}\ell^s \, \P\{Y=\ell\}=\sum_{\ell= 0}^{2s-1}\ell^s \, \P\{Y=\ell\}+ \sum_{\ell\ge 2s}\ell^s \, \P\{Y=\ell\}\\
&\le (2s)^s\cdot 1 + \sum_{\ell\ge 2s}2^s\fallfak{\ell}s \, \P\{Y=\ell\}\le (2s)^s +2^s \E(\fallfak{Y}{s}).
\end{split}
\end{equation*}
This implies that
\[
\E(Y^s)\le 2^s(s^s + \E\big(\fallfak{Y}{s})\big)
\le 2^s(s^s + \rho^s\mu_s\big)\le  4^{s}(1+\rho)^s \cdot \max\{s^s,\mu_s\}.
\]
Consequently, 
\[
\big(\E(Y^{2s})\big)^{-\frac1{2s}}\ge \frac{1}{4(1+\rho)}  \cdot \min\{\frac{1}{2s},(\mu_{2s})^{-\frac1{2s}}\},
\]
such that
\[
\sum_{s\ge 1}\big(\E(Y^{2s})\big)^{-\frac1{2s}} \ge \frac{1}{4(1+\rho)} \sum_{s\ge 1}\min\{\frac{1}{2s},(\mu_{2s})^{-\frac1{2s}}\}.
\]
By H\"older's inequality, the moments of $X\ge 0$ satisfy for $0<r<s$ the inequality
\[
\E(X^r)\le \big(\E(X^s)\big)^{\frac{r}{s}}.
\]
Hence, for integers $0<r<s$ we have
\[
\mu_{2r}^{\frac1{2r}}\le \mu_{2s}^{\frac1{2s}},
\]
and sequence $(m_s)_{s\in\N}$, defined by $m_s:=\mu_{2s}^{-\frac1{2s}}$, is monotonically decreasing. 
It remains to show that
\begin{equation}
\label{MOMSEQseries1}
\sum_{s\ge 1}\min\{\frac{1}{s},m_s\}=\infty,
\end{equation}
which immediately implies the required result; note that we omitted the additional factor $\frac12$ for the sake of simplicity.
If $m_s$ is bounded away from zero this is immediately true. Hence, in the following we assume that $(m_s)_{s\in\N}$ is a null sequence.
Let $\N=I_1\cup I_2$, with $I_1\cap I_2=\emptyset$, such that for all $s\in I_1$ we have $\frac1{s}\le m_s$, 
and for $s\in I_2$ we have $\frac1{s}> m_s$.
We obtain 
\[
\sum_{s\ge 1}\min\{\frac{1}{s},m_s\}=\sum_{s\in I_1}\frac1s + \sum_{s\in I_2}m_s.
\]
By our initial assumption $\sum_{s\ge 1}m_s=\infty$, equation~\eqref{MOMSEQseries1} is directly satisfied if either $I_1$ or $I_2$ is finite.
Hence, we assume that both sets are infinite. Assume further that $\sum_{s\in I_1}\frac1s $ is finite. 
We can write the set $I_1$ as the disjoint union of 
infinitely many finite length intervals
\[
I_1=\bigcup_{\ell\ge 1}[a_\ell,b_\ell],
\]
with $[a_\ell,b_\ell]:=\{a_\ell,a_\ell+1,\dots,b_\ell\}$ and $a_\ell,b_\ell\in\N$ for all $\ell\in\N$.
If all but finitely many intervals are of length one, such that $a_\ell=b_\ell$, the values $s$ with $\min\{\frac{1}{s},m_s\}=\frac1s$ are essentially isolated.
In this case we note that $\ell\in I_1$ and $\ell-1\in I_2$ and use for $\ell\ge 2$ the inequality
\[
a_\ell\le a_{\ell-1}\le \frac{1}{\ell-1}\le\frac{2}{\ell}.
\]
This implies that also $\sum_{s\in I_1}m_s$ is finite too, such that $\sum_{s\in I_2}m_s$ is infinite.
Finally, we assume that infinitely many intervals are of length greater or equal two. 
By our earlier assumption $\sum_{s\in I_1}\frac1s $ is finite and satisfies
\[
\sum_{s\in I_1}\frac1s= \sum_{\ell\in\N}\sum_{s\in [a_\ell,b_\ell]}\frac1s > \sum_{\ell\in\N}\int_{a_\ell}^{b_\ell}\frac1x dx
=\sum_{\ell\in\N} \ln\Big(\frac{b_\ell}{a_\ell}\Big)>0.
\]
Furthermore, $\ln\Big(\frac{b_\ell}{a_\ell}\Big)<\epsilon$ for all sufficiently large $\ell$, such that $b_\ell <e^{\epsilon}a_\ell$. 
This implies that for $k\in [a_\ell,b_\ell]$ and sufficiently large $\ell$
\[
\frac{m_k}{\frac{1}k}\le \frac{m_{a_\ell}}{\frac{1}{b_\ell}}< \frac{m_{a_\ell}}{\frac{1}{e^{\epsilon}a_\ell}}
\le \frac{m_{a_\ell-1}}{\frac{1}{e^{\epsilon}a_\ell}}
\le e^{\epsilon}\frac{\frac1{a_{\ell-1}}}{\frac{1}{a_\ell}}\le 2e^{\epsilon}.
\]
Hence, $m_k\le \frac{2e^{\epsilon}}{k}$.
Combining this with our previous argument for the essentially isolated values we deduce that $\sum_{s\in I_1}m_s$ is finite too, such that $\sum_{s\in I_2}m_s=\infty$. 
\end{proof}

Concerning random discrete structures one usually encounters non-negative discrete random variables $X_n$. 
As mentioned before in \eqref{MOMSEQFactMomDiff} the factorial moments are readily obtained from the probability generating function $p(z)=\E\big(z^{X_n}\big)$ 
by repeated differentiation: 
\[
\E(\fallfak{X_n}{s})=E_z\,D_z^s\, p(z),\qquad s\ge 1.
\]
In contrast, the ordinary moments require the usage of the so-called theta differential operator $\Theta_z=zD_z$: 
$\E(X_n^{s})=E_z\,\Theta_z^s\, p(z)$. Mixed Poisson distributions and a related phase transition naturally occur if the factorial moments satisfy asymptotic expansions similar to~\eqref{MOMSEQmmexpansion} instead of the power moments. 
\begin{lemma}[Factorial moments and limit laws of mixed Poisson type]
\label{MOMSEQMainLemma}
Let $(X_n)_{n\in\N}$ denote a sequence of random variables, whose factorial moments are asymptotically of mixed Poisson type satisfying for $n$ tending to infinity the asymptotic expansion
\[
\E(\fallfak{X_n}s)=\lambda_n^s \cdot \mu_s\cdot (1 + o(1)),\quad s\ge 1,
\]
with $\mu_s\ge 0$, and $\lambda_n>0$. Furthermore assume that the moment sequence $(\mu_s)_{s\in\N}$ determines a unique distribution 
$X$ satisfying Carleman's condition. Then, the following limit distribution results hold:
\begin{itemize}
\item[(i)] if $\lambda_n\to\infty$, for $n\to\infty$, the random variable $\frac{X_n}{\lambda_n}$ converges in distribution, with convergence of all moments, to $X$. 

\item[(ii)] if $\lambda_n\to\rho \in (0,\infty)$, for $n\to\infty$, the random variable $X_n$ converges in distribution, with convergence of all moments, to a mixed Poisson distributed random variable $Y\law \MPo(\rho X)$.
\end{itemize}
Moreover, the random variable $Y\law \MPo(\rho X)$ converges for $\rho\to\infty$, after scaling, to its mixing distribution $X$: $\frac{Y}{\rho}\claw X$, 
with convergence of all moments.
\end{lemma}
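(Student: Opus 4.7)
The plan is to reduce both items to a single application of the Fréchet--Shohat moment convergence theorem, using the conversion $x^s=\sum_{k=0}^{s}\Stir{s}{k}\fallfak{x}{k}$ to translate the assumed factorial-moment expansion into an asymptotic expansion of the ordinary power moments. Specifically, for each fixed $s\ge 1$ one has
\[
\E(X_n^s) \;=\; \sum_{k=0}^{s}\Stir{s}{k}\,\E(\fallfak{X_n}{k}) \;=\; \sum_{k=0}^{s}\Stir{s}{k}\,\lambda_n^{k}\,\mu_k\,(1+o(1)),
\]
where the $o(1)$ is uniform in $n$ since $s$ is fixed. From here everything is bookkeeping.

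For part (ii), substituting $\lambda_n\to\rho\in(0,\infty)$ gives $\E(X_n^s) \to \sum_{k=0}^{s}\Stir{s}{k}\rho^k\mu_k$, which by Proposition~\ref{MOMSEQPropEigenschaften}(b) is precisely $\E(Y^s)$ for $Y\law \MPo(\rho X)$. Since $X$ satisfies Carleman's condition, Lemma~\ref{MOMSEQLemmaUniqueness} guarantees that $Y$ does too, so the limit distribution is uniquely determined by its moment sequence and Fréchet--Shohat yields $X_n \claw Y$, with convergence of all moments. For part (i), dividing the displayed identity by $\lambda_n^s$ produces
\[
\E\bigl((X_n/\lambda_n)^s\bigr) \;=\; \sum_{k=0}^{s}\Stir{s}{k}\,\lambda_n^{k-s}\,\mu_k\,(1+o(1)),
\]
and as $\lambda_n\to\infty$ every term with $k<s$ vanishes, leaving only the $k=s$ contribution $\Stir{s}{s}\mu_s=\mu_s$. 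Carleman's condition for $X$ together with Fréchet--Shohat then delivers $X_n/\lambda_n \claw X$ with convergence of all moments.

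The final assertion about $Y/\rho \claw X$ as $\rho\to\infty$ is essentially case (i) applied to the family $(Y_\rho)_{\rho>0}$ with $Y_\rho \law \MPo(\rho X)$: by Proposition~\ref{MOMSEQPropEigenschaften}(a) the factorial moments satisfy $\E(\fallfak{Y_\rho}{s})=\rho^s\mu_s$ exactly, so the same dominance argument shows $\E((Y_\rho/\rho)^s)\to\mu_s$ for every $s$, and Fréchet--Shohat with Carleman concludes the proof. The only nontrivial input beyond elementary manipulations is Lemma~\ref{MOMSEQLemmaUniqueness}, which secures the uniqueness of $Y$ from that of $X$ in part (ii); the remaining obstacle is purely notational, namely keeping track of which Stirling-number terms survive in the limit $\lambda_n\to\infty$ versus $\lambda_n\to\rho$.
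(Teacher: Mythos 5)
Your proposal is correct and follows essentially the same route as the paper: converting factorial moments to power moments via the Stirling numbers of the second kind, isolating the dominant term ($k=s$ when $\lambda_n\to\infty$, the full sum when $\lambda_n\to\rho$), invoking Lemma~\ref{MOMSEQLemmaUniqueness} for the uniqueness of the mixed Poisson limit in case (ii), and concluding with Fr\'echet--Shohat. The treatment of the final assertion $Y/\rho\claw X$ via the exact identity $\E(\fallfak{Y}{s})=\rho^s\mu_s$ is also exactly the paper's argument.
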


\begin{remark}
It may be possible to unify cases (i) and (ii) to arbitrary sequences $\lambda_n$ by a suitable result 
for the distance between random variables $X_n$ and $Y_n=\MPo(\lambda_n X)$.
\end{remark}

\begin{remark}
The results above complement the standard case when the distribution of $X$ degenerates, $X = 1$. The random variables $X_n$ are then asymptotically Poisson distributed with parameter $\lambda_n$. 
Thus, the distribution of $\frac{X_n}{\lambda_n}$ degenerates for $\lambda_n\to\infty$, since we expect a central limit theorem for $(X_n-\lambda_n)/\sqrt{\lambda_n}$.
It might also be necessary for non-degenerate $X$ to consider centered random variables similar to $X^{\ast}_n=X_n-\lambda_n$, and its (factorial) moments, instead of $X_n$. 
\end{remark}

\begin{remark}
The result above can be strengthened to also include the degenerate case $\lambda_n\to 0$, such that $X_n\claw 0$. 
By Markov's inequality it suffices to prove that $\E(X_n)\to 0$. In order to obtain additional moment convergence one has to show $\E(\fallfak{X_n}s)\to 0$ for every $s\in\N$.
\end{remark}

\begin{remark}[Moment generating functions and limit laws of mixed Poisson type]
Let $\psi(z)=\E(e^{zX})$ denote the moment generating function of $X$. If the moment generating function $\varphi(z)=\E(e^{zX_n})$ satisfies for $n\to\infty$ the asymptotic expansion
\[
\varphi(z)=\psi\left(\lambda_n\big(e^z-1\big)\right)\cdot(1+o(1)),
\] 
then the conclusion of the lemma above - convergence in distribution - still holds, but a priori without moment convergence. On the other hand, 
if the moments of $(\mu_s)_{s\in\N}$ do not determine a unique distribution, one still obtains by the Lemma above convergence of integer moments, but one cannot deduce convergence in distribution. 
\end{remark}

\begin{remark}
\label{MOMSEQRemarkAdditionalPhase}
In the analysis of random discrete structures the random variables $X_n$ often depend on an additional parameter describing or measuring a certain local aspect of the combinatorial structure, such that $X_n=X_{n,j}$. Moreover, the expansion of the factorial moments often depend on this parameter in a crucial way. A quite common situation (see \cite{Desc-KubPan2005,KubPan2007,PanKuAdvances,PanSeitz2012} and also~\cite{FlaDumPuy2006,Janson2006})
is the following dichotomy for the asymptotic expansion of the factorial moments:
\[
\E(\fallfak{X_n}s)=
\begin{cases}
\lambda_{n,0}^s \cdot \mu_{s,j}\cdot (1 + o(1)),\quad s\ge 1, &\quad j\text{ fixed},\\
\lambda_{n,1}^s \cdot \mu_s\cdot (1 + o(1)),\quad s\ge 1, &\quad j\to\infty,
\end{cases}
\]
where $\lambda_{n,0}$ is independent of $j$, but $\lambda_{n,1}=\lambda_{n,1}(j)$ also depends on the growth of this additional parameter $j$ compared to $n$. 
Consequently, one encounters one additional family of limit laws when $j$ is fixed, determined by the moment sequence $(\mu_{s,j})_{s\in\N}$.
Note that in all presented examples the following additional property holds for $s\ge 1$:
\[
\Lambda_j^{s}\mu_{s,j}\to\mu_s,\quad j\to\infty,
\]
where $\Lambda_j$ denotes an additional scale parameter; compare with the Remarks~\ref{MOMSEQExampleDimurnsRemark}, \ref{MOMSEQExampleDescRemark}, and~\ref{MOMSEQExampleTriangularRemark}. 
\end{remark}

\begin{proof}[Proof of Lemma~\ref{MOMSEQMainLemma}]
By~\eqref{MOMSEQ-eqn1c} the power moments of $X_n$ satisfy the following asymptotic expansion
\begin{equation*}
\E(X_n^s)=\sum_{j=0}^{s}\Stir{s}{j}\E(\fallfak{X_n}{j})=\sum_{j=0}^{s}\Stir{s}{j}\lambda_n^j \, \mu_j(1 + o(1))
=\Big(\sum_{j=0}^{s}\Stir{s}{j}\lambda_n^j \, \mu_j\Big)(1 + o(1)).
\end{equation*}
If $\lambda_n\to\infty$ for $n\to\infty$, we obtain further the expansion
\begin{equation*}
\begin{split}
\E(X_n^s)&=\Big(\sum_{j=0}^{s}\Stir{s}{j}\lambda_n^j \, \mu_j\Big)(1 + o(1))=\Big(\Stir{s}{s}\lambda_n^s \, \mu_s + \mathcal{O}(\lambda_n^{s-1})\Big)(1 + o(1))\\
&=\lambda_n^s \, \mu_s + \mathcal{O}(\lambda_n^{s-1})+ o(\lambda_n^{s}).
\end{split}
\end{equation*}
Consequently, the moments of $\frac{X_n}{\lambda_n}$ converge to the moments $\mu_s$ of the mixing distribution.
Since the moments of $X$ satisfy Carleman's condition, this proves by the Fr\'echet-Shohat moment convergence theorem convergence in distribution.

Furthermore, for $\lambda_n\to\rho$ for $n\to\infty$, we directly obtain
\[
\E(X_n^s)=\Big(\sum_{j=0}^{s}\Stir{s}{j}\lambda_n^j \, \mu_j\Big)(1 + o(1))=\sum_{j=0}^{s}\Stir{s}{j}\rho^j \mu_j + o(1).
\]
Consequently, the moments of $X_n$ converge to the moments of a mixed Poisson distributed random variable $Y\law \MPo(\rho X)$, 
which is uniquely determined by its moment sequence, according to Lemma~\ref{MOMSEQLemmaUniqueness}, and our assumption on the moments of $X$.
Finally, an identical argument proves that a mixed Poisson distributed random variable $Y\law \MPo(\rho X)$ converges to its mixing distribution 
for $\rho\to\infty$. 
\end{proof}

\section{Examples and applications\label{secApplications}}
We present several appearances of mixed Poisson distributions in the analysis of random discrete structures, in particular various families of random trees, $k$-Stirling permutations, and urn models. 
We discuss several families of random trees where a mixed Poisson law arises as the limit law of a discrete random variable $X_{n,j}$. The parameter $n\in\N$ usually measures the size of the investigated trees, and $j$ denotes an additional parameter measuring or marking a certain aspect of the combinatorial structure, i.e., a node with a certain label $j$ of interest, often satisfying a natural constraint of the type $1\le j\le n$, see \cite{Desc-KubPan2005,KubPan2007,PanKuCPC,PanSeitz2011}. In the limit $n\to\infty$, with $j=j(n)$, phase transitions were observed according to the relative growth of $j$ with respect to $n$, e.g., $j=1,2,\dots$ being a constant independent of $n$, $j\to\infty$ but with $j=o(n)$, or $j\sim \rho\cdot n$, for fixed $\rho$. As mentioned in the introduction, in this section we will unify and simplify earlier arguments by starting from explicit formulas for the factorial moments occurring in various works. These explicit formulas directly lead to mixed Poisson laws, using Lemmas~\ref{MOMSEQLemmaUniqueness} and~\ref{MOMSEQMainLemma}, and Stirling's formula for the Gamma function
\begin{equation}
\label{MOMSEQStirling}
\Gamma(x)= x^{x - \frac{1}{2}} e^{-x} \sqrt{2\pi}
        \left(1 + \mathcal{O}\big(x^{-1}\big)\right), \qquad \text{for $x\to\infty$}.
\end{equation}
Besides that, whenever possible we give an interpretation of the random variables occurring in terms of urn models.

\subsection{Block sizes in \texorpdfstring{$k$}{k}-Stirling permutations\label{MOMSEQExampleBlocks}}
Stirling permutations were defined by Gessel and Stanley \cite{GessStan1978}. A Stirling permutation is a permutation of
the multiset $\{1, 1, 2, 2, \dots , n, n\}$ such that, for each $i$, $1\le i \le n$, the elements occurring between the two
occurrences of $i$ are larger than $i$. E.g., $1122$, $1221$ and $2211$ are Stirling permutations, whereas the permutations $1212$ and $2112$ of $\{1, 1, 2, 2\}$ aren't. The name of these combinatorial objects is due to relations with the Stirling numbers, see \cite{GessStan1978} for details and \cite{Koganov} for bijections with certain tree families.
A straightforward generalization of Stirling permutations is to consider permutations of a more general multiset $\{1^{k}, 2^{k}, \dots, n^{k}\}$, 
with $k\in\N$ (we use in this context $j^{\ell} := \underbrace{j, \dots, j}_{\ell}$, for $\ell \ge 1$), such that for each
$i$, $1\le i \le n$, the elements occurring between two occurrences of
$i$ are at least $i$. Such restricted permutations on the multiset $\{1^{k}, 2^{k}, \dots, n^{k}\}$ are called $k$-Stirling permutations of order $n$; they have already been considered by Brenti \cite{Brenti1989,Brenti1998}, Park~\cite{Park1994b,Park1994a,Park1994c}, and Janson et al.~\cite{Janson2008,Janson2011}. 
These $k$-Stirling permutations can be generated in a sequential manner: we start with $1^k=1\dots 1$ and insert the string $(n+1)^k$ at any position (anywhere, including first or last) in a given $k$-Stirling permutation of $\{1^{k}, 2^{k}, \dots, n^{k}\}$, $n\ge 1$. In the case $k=3$, we have for example one permutation of order $1$: $111$; four
permutations of order $2$: $111222$, $112221$, $122211$, $222111$; etc. 

\smallskip

A \emph{block} in a $k$-Stirling permutation $\sigma=\sigma_1\dotsm \sigma_s$ is a substring
$\sigma_a \dotsm \sigma_b$, with $\sigma_a=\sigma_b$, that is maximal, i.e., which is not contained in any larger
such substring. There is obviously at most one block for every $j \in \{1, 2, \dots, n\}$,
extending from the first occurrence of $j$ to the last one; we say that
$j$ forms a block if this substring is indeed a block, i.e., when it
is not contained in a string $j' \dotsm j'$, for some $j'<j$.
It can be shown easily by induction that any $k$-Stirling permutation has a unique
decomposition as a sequence of its blocks. For example, the 3-Stirling permutation $\sigma=112233321445554777666$, has 
block decomposition\newline
$[112233321][445554][777][666]$.

\smallskip

Of course, the size of a block in a $k$-Stirling permutation is always a multiple of $k$. 
The number of blocks of size $k\cdot \ell$ in a random $k$-Stirling permutation of order $n$ was studied in~\cite{PanKuCPC}.
There, a simple exact expression for the factorial moments was derived:
\begin{equation*}
   \E(\fallfak{\X}{s})= \frac{s!}{(k\ell)^s}\binom{\ell-1-\frac1k}{\ell-1}^{s} \cdot
   \frac{\binom{n-\ell s+\frac{s+1}{k}-1}{n-\ell s}}{\binom{n-1+\frac1k}{n}}.
\end{equation*}

Depending on the growth of $\ell=\ell(n)$ as $n \to \infty$, 
two random variables $X$ and $Y$ arose as limiting distributions of $X_{n,\ell}$. The random variable $X$ 
with moment sequence
\begin{equation}
\label{ExampleBlocks0}
\E(X^s)=(s+1)!\frac{\Gamma(1+\frac1k)}{\Gamma(1+\frac{s+1}k)}
\end{equation}
could be characterized using observations by Janson et al.~\cite{Janson2011}, and Janson~\cite{Janson2006}. 
It has a density function $f(x)$ that can be written as 
\begin{equation}
\label{ExampleBlocks1}
f(x)=\frac{\Gamma(\frac1k)}{\pi}\sum_{j=1}^\infty
 (-1)^{j-1} \frac{\Gamma(\frac jk+1)\sin\frac{j\pi}k}{j!}\,x^{j}, \qquad \text{for $x>0$}.
\end{equation}
However, the characterization of the random variable $Y$ was incomplete, only the (factorial) moments were known:
\begin{equation}
\label{ExampleBlocks2}
\E(\fallfak{Y}s)=(s+1)!\frac{\Gamma(1+\frac1k)}{\Gamma(1+\frac{s+1}k)}\rho^s, \quad s\ge 1.
\end{equation}
Using Lemma~\ref{MOMSEQMainLemma}, we can fill this gap, extending the results of~\cite{PanKuCPC}.

\begin{coroll}
\label{MOMSEQCorollExampleBlocks}
The factorial moments of the random variable $X_{n,\ell}$, counting the number of blocks of size $k\cdot \ell$ in a random k-Stirling permutation of order $n$, are for $n\to\infty$ asymptotically of mixed Poisson type,
with mixing distribution $X$, determined by its moments and density given by~\eqref{ExampleBlocks0} and~\eqref{ExampleBlocks1} and scale parameter $\lambda_{n,\ell}=\frac1{k\ell}\binom{\ell-1-\frac1k}{\ell-1}n^{\frac1k}$:
\[
\E(\fallfak{X_{n,\ell}}{s})=\lambda_{n,\ell}^s(s+1)!\frac{\Gamma(1+\frac1k)}{\Gamma(1+\frac{s+1}k)}(1+o(1)),
\]

\begin{itemize}
\item[(i)] for $\ell=\ell(n)$, such that $\lambda_{n,\ell}\to\infty$, the random variable $\frac{X_{n,\ell}}{\lambda_{n,\ell}}$ converges in distribution, with convergence of all moments, to $X$. 

\item[(ii)] for $\ell=\ell(n)$, such that $\lambda_{n,\ell}\to\rho\in(0,\infty)$, the random variable $X_{n,\ell}$ converges in distribution, with convergence of all moments, to a mixed Poisson distributed random variable $Y\law \MPo(\rho X)$. Its probability mass function is given by
\[
\P\{Y=i\}=\sum_{s\ge i}\binom{s}{i}(-1)^{s-i}\rho^{s}\frac{\Gamma(1+\frac1k)}{\Gamma(1+\frac{s+1}k)}, \quad i\ge 0.
\]
\end{itemize}
Moreover, for $\rho\to\infty$, the random variable $Y/\rho$ converges in distribution to $X$, with convergence of all moments.
\end{coroll}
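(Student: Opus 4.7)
The plan is to reduce the statement to a direct application of Lemma~\ref{MOMSEQMainLemma} by extracting a sharp asymptotic from the explicit factorial moment formula. First I rewrite all binomial coefficients via Gamma functions, obtaining
\[
\E(\fallfak{\X}{s}) = \frac{s!}{(k\ell)^{s}}\binom{\ell-1-\tfrac{1}{k}}{\ell-1}^{s} \cdot \frac{\Gamma(\tfrac{1}{k})}{\Gamma(\tfrac{s+1}{k})} \cdot \frac{\Gamma(n-\ell s+\tfrac{s+1}{k})}{\Gamma(n-\ell s +1)} \cdot \frac{\Gamma(n+1)}{\Gamma(n+\tfrac{1}{k})}.
\]
Two applications of Stirling's formula~\eqref{MOMSEQStirling} give, under the relevant regime $\ell s = o(n)$, the asymptotic equivalents $\Gamma(n-\ell s+\tfrac{s+1}{k})/\Gamma(n-\ell s +1) \sim n^{(s+1)/k - 1}$ and $\Gamma(n+1)/\Gamma(n+\tfrac{1}{k}) \sim n^{1-1/k}$, whose product is $n^{s/k}(1 + o(1))$. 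A short rearrangement using the functional equation $\Gamma(1+x) = x\,\Gamma(x)$ then converts $s!\,\Gamma(\tfrac{1}{k})/\Gamma(\tfrac{s+1}{k})$ into $(s+1)!\,\Gamma(1+\tfrac{1}{k})/\Gamma(1+\tfrac{s+1}{k})$, yielding
\[
\E(\fallfak{\X}{s}) = \lambda_{n,\ell}^{s}\cdot (s+1)!\,\frac{\Gamma(1+\tfrac{1}{k})}{\Gamma(1+\tfrac{s+1}{k})}\cdot(1+o(1)),
\]
with $\lambda_{n,\ell}$ as in the statement. This is precisely the mixed Poisson type expansion required by Lemma~\ref{MOMSEQMainLemma}.

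Next I verify the remaining hypothesis of Lemma~\ref{MOMSEQMainLemma}, namely that the sequence $\mu_{s} = (s+1)!\,\Gamma(1+\tfrac{1}{k})/\Gamma(1+\tfrac{s+1}{k})$ determines a unique distribution via Carleman's criterion~\eqref{MOMSEQmmCarleman}. A further application of Stirling's formula yields $\mu_{s}^{1/s} \sim (s/e)^{1-1/k}\,k^{1/k}$, so $\mu_{2s}^{-1/(2s)}$ is of order $s^{-(1-1/k)}$ with $1 - 1/k < 1$ for every $k \in \N$. Hence $\sum_{s\ge 1}\mu_{2s}^{-1/(2s)}$ diverges, $X$ is the unique random variable with moment sequence~\eqref{ExampleBlocks0} (and density~\eqref{ExampleBlocks1} as an explicit realization), and Lemma~\ref{MOMSEQMainLemma} applied with $\lambda_{n} = \lambda_{n,\ell}$ immediately gives case (i) in the regime $\lambda_{n,\ell}\to\infty$ and case (ii) in the regime $\lambda_{n,\ell}\to\rho \in (0,\infty)$. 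The probability mass function of $Y\law\MPo(\rho X)$ then follows from Proposition~\ref{MOMSEQthe1} by inserting $\mu_{s}$, and the concluding $\rho\to\infty$ statement is the last assertion of Lemma~\ref{MOMSEQMainLemma}.

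The main obstacle lies in the first step: one has to ensure that the Stirling expansion is valid uniformly enough in $\ell = \ell(n)$ to cover both regimes simultaneously. Case (ii) is the delicate one: using $\binom{\ell-1-1/k}{\ell-1} \sim \ell^{-1/k}/\Gamma(1-1/k)$, the condition $\lambda_{n,\ell}\to\rho$ forces $\ell = \Theta(n^{1/(k+1)})$, so $\ell s/n = \mathcal{O}(n^{-k/(k+1)}) \to 0$ for each fixed $s$. This justifies replacing $(n-\ell s)^{(s+1)/k-1}$ by $n^{(s+1)/k-1}$ up to an $o(1)$ factor, and analogously for the remaining Gamma ratio. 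In case (i) one has $\ell = o(n^{1/(k+1)})$, so the same expansion is valid a fortiori.
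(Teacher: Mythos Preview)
Your proof is correct and follows the same approach as the paper: start from the exact factorial-moment formula quoted from \cite{PanKuCPC}, extract the asymptotic via Stirling's formula, and invoke Lemma~\ref{MOMSEQMainLemma} (together with Proposition~\ref{MOMSEQthe1} for the probability mass function). The paper itself does not spell out the details here, simply stating that Lemma~\ref{MOMSEQMainLemma} fills the gap left in \cite{PanKuCPC}; your write-up supplies exactly those details, including the careful check that $\ell s = o(n)$ in both regimes and the verification of Carleman's condition, so nothing is missing.
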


\smallskip

The result above can also be interpreted in terms of a suitable urn model. First we recall the definition of P\'olya-Eggenberger urn models. We start with an urn containing $n$ white balls and $m$ black balls.
The evolution of the urn occurs in discrete time steps. At every step
a ball is drawn at random from the urn. The color of the ball is
inspected and then the ball is returned to the urn. According to
the observed color of the ball there are added/removed balls due to the
following rules. If a white ball has been drawn, we put into the urn
$\alpha$ white balls and $\beta$ black balls, but if a black ball has been drawn, we put into the urn $\gamma$ white balls and $\delta$ black balls.
The values $\alpha, \beta, \gamma, \delta \in \mathbb{Z}$ are fixed integer values and the
urn model is specified by the $2\times 2$ ball replacement matrix $M = \bigl(\begin{smallmatrix} \alpha & \beta \\ \gamma & \delta\end{smallmatrix}\bigr)$.
This definition readily extends to higher dimensions, leading to $r\times r$ ball replacement matrices, if balls of $r$ different colours are involved.
Note that we may consider $\alpha, \beta, \gamma, \delta \in \mathbb{R}$, defining the urn process as a Markov process; see Remark~1.11 of Janson~\cite{Janson2006}.
One usually assumes that the urns are tenable: the process of drawing and adding/removing balls can be continued ad infinitum, 
never having to remove balls which are not present in the urn. Starting with $W_0=w_0$ white balls and $B_0=b_0$ black balls, one is then interested in the composition $(W_n,B_n)$ of the urn after $n$ draws. 
For a few recent results we refer the reader to~\cite{bai2002,FlaDumPuy2006,FlaGabPek2005,Janson2004,Janson2006,Pou2008}. 

In order to describe the growth of the r.v.\ $X_{n,i}$, $1 \le i \le \ell$, by means of a P\'{o}lya-Eggenberger urn model, we consider the simple growth process of random $k$-Stirling permutations: in order to generate a random $k$-Stirling permutation of order $n+1$, we select uniformly at random a $k$-Stirling permutation of order $n$ and insert the substring $(n+1)^{k}$ uniformly at random at one of the $kn+1$ insertion positions. In the urn model description, each ball in the urn will correspond to an insertion position in the $k$-Stirling permutation. We will require $\ell+2$ different colours of balls.
Balls of colours $i$, with $1 \le i \le \ell$, correspond to insertion positions within blocks of size $k i$, balls of colour $\ell+1$ correspond to insertion positions within blocks of size $\ge k(\ell+1)$, and balls of colour $0$ correspond to insertion positions between two consecutive blocks, or before the first or after the last block. When inserting a new substring the following situations can occur, which then describe the evolution of the urn:
\begin{enumerate}
\item[$(i)$] When inserting the string $(n+1)^{k}$ into a block of size $k i$, $1 \le i \le \ell$, then this block changes to a block of size $k (i+1)$. In the urn model this means that if a ball of colour $i$ is drawn, $ki-1$ balls of colour $i$ have to be removed and $k(i+1)-1$ balls of colour $i+1$ have to be added.
\item[$(ii)$] When inserting the string $(n+1)^{k}$ into a block of size $\ge k (\ell+1)$, then it remains a block of size $\ge k (\ell+1)$, but its size increases by $k$. In the urn model this means that if a ball of colour $\ell+1$ is drawn, $k$ balls of colour $\ell+1$ have to be added.
\item[$(iii)$] When inserting the string $(n+1)^{k}$ between two consecutive blocks, or before the first or after the last block, then a new block of size $k$ appears; furthermore an additional insertion position between two consecutive blocks occurs. In the urn model this means that if a ball of colour $0$ is drawn, $k-1$ balls of colour $1$ and one ball of colour $0$ have to be added.
\end{enumerate}
The initial $k$-Stirling permutation $1^{k}$ contains one block of size $k \cdot 1$ with $k-1$ insertion positions; furthermore there are one insertion position before the first and one insertion position after the last block, which describes the initial configuration of the urn.
Thus the following urn model description follows immediately.
\begin{urn}
Consider a balanced urn (i.e., each row of the ball replacement matrix has the same row sum) with balls of $\ell+2$ colours and let the random vector $(U_{n,0},\dots,U_{n,\ell+1})$ count the number of balls of each colour at time $n$ with the $(\ell+2) \times (\ell+2)$ ball replacement matrix $M$ given by
\begin{equation*}
    M = \left(
    \begin{smallmatrix}
        1 & k-1 & 0 & \cdots & 0 & 0 & 0& 0 \\[-1ex]
        0 & -(k-1) & 2k-1 & \ddots & \ddots & \ddots & 0& 0 \\[-1ex]
        0 & 0 & -(2k-1) & 3k-1 & \ddots & \ddots & 0 & 0\\[-1ex]
        \vdots & \ddots & \ddots & \ddots & \ddots &
        \ddots & \vdots & 0\\[-1ex]
        \vdots & \ddots & \ddots & \ddots & \ddots &
        \ddots & \vdots & 0\\[-1ex]
                0 & \ddots & \ddots & \ddots & 0 & -((\ell-1)k-1) & \ell k-1& 0\\[-1ex]
        0 & \ddots & \ddots & \ddots & 0 & 0 & -(\ell k-1) & (\ell+1)k-1\\
        0 & 0 & 0 & \cdots & 0 & 0 & 0& k
    \end{smallmatrix}
    \right).
\end{equation*}
The initial configuration of the urn (it is here convenient to start at time $1$) is given by
$(U_{1,0},\dots,U_{1,\ell+1})=(2,k-1,0,\dots,0)$. It holds that the random variables $U_{n,i}$, with $1 \le i \le \ell$, described by the urn model are related to the random variables 
$X_{n,i}$, $1 \le i \le \ell$, which count the number of blocks of size $k i$ in a random $k$-Stirling permutation of order $n$, as follows:
\begin{equation*}
   U_{n,i} = (ki-1) X_{n,i}, \quad 1\le i\le \ell.
\end{equation*}
\end{urn}
By Theorem~\ref{MOMSEQCorollExampleBlocks} and the results of~\cite{PanKuCPC} this implies that the random variables $U_{n,i}$ occurring in the urn model undergo a phase transition according to the growth of $\ell$ with respect to $n$, from continuous to discrete, where the moments of the appearing random variables $X$ and $Y$ are related by the Stirling transform.

\subsection{Diminishing P\'olya-Eggenberger urn models\label{MOMSEQExampleDimurns}}
A classical example of a non-tenable urn model is the sampling without replace\-ment urn with ball replacement matrix given by $\left(\begin{smallmatrix} -1 & 0 \\ 0 & -1\end{smallmatrix}\right)$. 
The process of drawing and replacing balls ends after $n+m$ steps, starting with $n$ white and $m$ black balls. 
Here, one is interested in the number of white balls remaining in the urn, after all black balls have been drawn. Several urn models of a similar non-tenable nature have recently received some attention under the name diminishing urn models,
see~\cite{PanKuAdvances} and the references therein.

\begin{urn}
Consider a possibly unbalanced generalized sampling without replacement urn model with ball replacement matrix 
$$M = \left(\begin{matrix} -\alpha  & 0 \\ 0 & -\delta\end{matrix}\right),\quad a,\delta\in\N.$$ 
The initial configuration of the urn consists of $\alpha \cdot n$ white balls and $\delta\cdot m$ black balls.
The random variable $X_{\delta m,\alpha n}$ counts the number of white balls remaining in the urn, after all black balls have been drawn.
\end{urn}

It was shown in~\cite{PanKuAdvances} that the factorial moments of the random variable $\hat{X}_{\delta m,\alpha n}=X_{\delta m,\alpha n}/\alpha $ are given by
\begin{equation*}
\E\Big(\fallfak{\hat{X}_{\delta m,\alpha n}}{s}\Big)=\frac{\fallfak{n}{s}}{\binom{m+\frac{\alpha s}{\delta }}{m}},\quad s\ge 1.
\end{equation*}
Moreover, a random variable $Y$ arises in the limit, whose factorial moments are given by
\begin{equation}
\label{ExampleDimUrns1}
\E(\fallfak{Y}s)=\rho^{s} \, \Gamma\big(1+\frac{\alpha s}{\delta }\big),\quad s\ge 1.
\end{equation}
Using a special case of Theorem~\ref{MOMSEQthe1} it was shown that $Y$ has a discrete distribution. However, the result of~\cite{PanKuAdvances}
contains a small gap: the moments $(\Gamma(1+\frac{\alpha s}{\delta }))_{s\in\N}$ only determine a unique distribution for $\alpha /\delta\le 2$, see~\cite{Gut2002}. Hence, only in this case the (factorial) moments of $Y$ determine a unique distribution. Since a Weibull distributed random variable $X\law \text{W}_{\delta /\alpha ,1}$, with shape parameter $\frac{\delta}{\alpha }$, scale parameter $1$, and density $f(t)=\frac{\delta }{\alpha }t^{\frac{\delta }{\alpha }-1}e^{-t^{\frac{\delta }{\alpha }}}$, $t\ge 0$, has moments $\E(X^s)=\Gamma(1+\frac{\alpha s}{\delta })$, we obtain the following characterization of $Y$,
extending the result of~\cite{PanKuAdvances}.

\begin{coroll}
\label{MOMSEQCorollDimUrns}
The random variable $\hat{X}_{\delta m,\alpha n}$, counting the number of white balls remaining in the urn, after all black balls have been drawn
in a generalized sampling without replacement urn, starting with $\alpha \cdot n$ white balls and $\delta \cdot m$ black balls, 
has for $\min\{n,m\}\to\infty$ factorial moments of mixed Poisson type with a Weibull mixing distribution $X\law \text{W}_{\delta /\alpha ,1}$, 
and scale parameter $\lambda_{n,m}=\frac{n}{m^{\frac{\alpha }{\delta }}}$:
\[
\E\Big(\fallfak{\hat{X}_{\delta m,\alpha n}}{s}\Big)=\lambda_{n,m}^s \Gamma\big(1+\frac{\alpha s}{\delta }\big)(1+o(1)).
\]
Assume that $\alpha/\delta \le 2$:
\begin{itemize}
\item[(i)] for $\lambda_{n,m}\to\infty$, the random variable $\frac{\hat{X}_{\delta m,\alpha n}}{\lambda_{n,m}}$ converges in distribution, with convergence of all moments, to $X$. 

\item[(ii)] for $\lambda_{n,m}\to\rho\in(0,\infty)$, the random variable $\hat{X}_{\delta m,\alpha n}$ converges in distribution, with convergence of all moments, to a mixed Poisson distributed random variable $Y\law \MPo(\rho X)$. Its probability mass function is given as follows:
\[
\P\{Y=\ell\}=\begin{cases}
\displaystyle
{
\sum_{j\ge \ell}(-1)^{j-\ell}\binom{j}{\ell}\rho^j \frac{\Gamma(1+\frac{\alpha j}{\delta})}{j!}
}
,&\quad\text{for }\frac{\alpha }{\delta }<1,\\
\displaystyle
{
\frac{1}{1+\rho}\Big(\frac{\rho}{1+\rho}\Big)^\ell
}
,&\quad\text{for }\frac{\alpha }{\delta }=1,\\

\displaystyle
{
\frac{\delta }{\alpha }\sum_{j\ge 0}(-1)^{j}\binom{j+\ell}{\ell}\rho^{-\frac{\delta}{\alpha }(j+1)} \frac{\Gamma(\frac{\delta(j+1)}{\alpha }+\ell)}{(j+\ell)!}
}
,&\quad\text{for }\frac{\alpha }{\delta }>1.
\end{cases}
\]
\end{itemize}
\end{coroll}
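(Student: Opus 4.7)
The plan is to proceed in three stages: derive the asymptotic factorial moments, apply the mixed Poisson limit framework, and then compute the probability mass function explicitly in each of the three regimes of $\alpha/\delta$.

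For the first stage, I would rewrite the exact factorial moment formula $\E(\fallfak{\hat{X}_{\delta m,\alpha n}}{s}) = \fallfak{n}{s}/\binom{m+\alpha s/\delta}{m}$ stated in the excerpt using Gamma functions and apply Stirling's formula~\eqref{MOMSEQStirling} to the ratio $\Gamma(m+\alpha s/\delta +1)/\Gamma(m+1) = m^{\alpha s/\delta}(1+o(1))$. Combined with $\fallfak{n}{s} = n^s(1+o(1))$, this yields the mixed-Poisson type expansion $\E(\fallfak{\hat{X}_{\delta m,\alpha n}}{s}) = \lambda_{n,m}^s \Gamma(1+\alpha s/\delta)(1+o(1))$ demanded by Lemma~\ref{MOMSEQMainLemma}.

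For the second stage, the limiting moment sequence $\mu_s = \Gamma(1+\alpha s/\delta)$ is that of a Weibull distribution $X \law \text{W}_{\delta/\alpha,1}$. The restriction $\alpha/\delta \le 2$ is exactly the M-determinacy range of the Weibull distribution established by Gut~\cite{Gut2002}; under this hypothesis the Fr\'echet--Shohat machinery inside Lemma~\ref{MOMSEQMainLemma} applies, and parts~(i) and~(ii) follow with convergence of all non-negative integer moments. Note that Carleman's condition~\eqref{MOMSEQmmCarleman} itself only succeeds for $\alpha/\delta\le 1$, since $\mu_{2s}^{-1/(2s)} \asymp s^{-\alpha/\delta}$ by Stirling; in the intermediate range $1<\alpha/\delta\le 2$ one invokes the M-determinacy of the mixed Poisson law directly via the analyticity of its probability generating function $p_Y(v)=\psi_X(\rho(v-1))$ on $[0,1]$, which is determined by the Stieltjes-determinate $X$.

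For the third stage, the explicit pmf formulas split naturally according to the sign of $\alpha/\delta - 1$. When $\alpha/\delta<1$ the Weibull mgf $\psi(z)$ is entire, so Proposition~\ref{MOMSEQthe1} applies directly and produces the stated alternating series. When $\alpha/\delta=1$ the mixing distribution collapses to $\text{Exp}(1)=\text{Gamma}(1,1)$, and the Gamma--negative binomial example in Section~\ref{secMixedPoisson} immediately identifies $Y$ as geometric with parameter $\rho/(1+\rho)$, matching the second formula.

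The main obstacle is the regime $\alpha/\delta>1$: here the moment series in $\rho$ diverges and Proposition~\ref{MOMSEQthe1} is unavailable because $\psi$ fails to exist in any neighbourhood of zero. I would instead work from Definition~\ref{MOMSEQdef1}, substituting the Weibull density into $\P\{Y=\ell\} = (\rho^\ell/\ell!)\E(X^\ell e^{-\rho X})$, changing variables $u = \rho t$, and Taylor-expanding the slow factor $\exp(-(u/\rho)^{\delta/\alpha})$. Since $\delta/\alpha<1$, the absolute value of the expanded integrand is dominated by $u^{\ell+\delta/\alpha-1}\exp(-u+\rho^{-\delta/\alpha}u^{\delta/\alpha})$, which is integrable on $(0,\infty)$, so Fubini--Tonelli justifies termwise integration; evaluating the resulting Gamma-integrals and rewriting $1/(j!\ell!) = \binom{j+\ell}{\ell}/(j+\ell)!$ reproduces the third formula exactly.
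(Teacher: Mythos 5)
Your proposal is correct and follows essentially the same route as the paper: Stirling's formula applied to the exact factorial-moment formula, Lemma~\ref{MOMSEQMainLemma} for the two limit regimes, Proposition~\ref{MOMSEQthe1} for $\alpha/\delta<1$, and, for $\alpha/\delta\ge 1$, substitution of the Weibull density into $\P\{Y=\ell\}=\frac{\rho^\ell}{\ell!}\E(X^\ell e^{-\rho X})$ followed by termwise expansion of $e^{-t^{\delta/\alpha}}$ and evaluation of the resulting Gamma integrals. Your additional remarks — the explicit dominated-convergence justification of the interchange of sum and integral in the case $\alpha/\delta>1$, and the observation that Carleman's condition itself only covers $\alpha/\delta\le 1$ so that the range $1<\alpha/\delta\le 2$ genuinely requires Gut's determinacy result rather than Lemma~\ref{MOMSEQLemmaUniqueness} — are welcome refinements of steps the paper leaves implicit.
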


\begin{remark}
\label{MOMSEQExampleDimurnsRemark}
As shown in~\cite{PanKuAdvances}, for fixed $m$ the random variable $\hat{X}_{\delta m,\alpha n}/n$ converges to the power $B^{\frac{1}{\alpha}}$ of a beta-distributed random variable $B$, with moments $\E(B^s)=1/\binom{m+\frac{\alpha s}{\delta}}{m}$.
The Weibull mixing distribution $X\law \text{W}_{\delta /\alpha ,1}$ can be recovered by considering the limit $m\to\infty$ of $B=B_m$:
\[
m^{\frac{\alpha}{\delta}}B_m\to X, \quad \text{for }m\to\infty,
\]
with convergence of all moments. Note that the results above can be extended to all $\alpha ,\delta \in\N$; however, for $\alpha/\delta\ge 2$ the method of moments cannot be used anymore. Instead, has to directly analyse the probability generating function $h_{n,m}(v)$, which can be derived using stochastic processes~\cite{PanKuAofA2012}.
\end{remark}

\begin{proof}[Proof of Corollary~\ref{MOMSEQCorollDimUrns}]
According to the definition of a mixed Poisson distributed random variable $Y\law\MPo(\rho X)$, it has factorial moments given by~\eqref{ExampleDimUrns1}.
In order to derive the integral-free series representation we proceed as follows. In the first case $\alpha /\delta <1$ we can directly use Theorem~\ref{MOMSEQthe1}, since the moment generating function of the mixing Weibull distribution $X$ exists at $-\rho$. In the remaining cases $\alpha /\delta \ge 1$ we use the definition and the density function of the Weibull distribution to get first
\[
\P\{Y=\ell\}=\frac{\rho^\ell}{\ell!}\int_0^{\infty}\frac{\delta }{\alpha }t^{\frac{\delta }{\alpha }+\ell-1}\exp\left(-t^{\frac{\delta }{\alpha }}-\rho t\right)dt.
\]
The case $\alpha /\delta =1$ readily leads to the stated geometric distribution after using the obvious simplification
\[
\frac{\delta}{\alpha}t^{\frac{\delta }{\alpha }+\ell-1}\exp\left(-t^{\frac{\delta }{\alpha }}-\rho t\right)=t^{\ell}e^{-t(\rho+1)}.
\]
Finally, for $\alpha /\delta >1$ we expand $e^{-t^{\frac{\delta }{\alpha }}}=\sum_{j\ge 0 }(-1)^j\frac{t^{\frac{j\delta }{\alpha }}}{j!}$ and obtain
\[
\P\{Y=\ell\}=\frac{\delta }{\alpha \cdot \ell!}\sum_{j\ge 0 }\frac{(-1)^j}{j!}\int_0^{\infty}t^{\frac{(j+1)\delta }{\alpha }+\ell-1}e^{-\rho t}dt.
\] 
The Gamma-function type integrals are readily evaluated and the stated result follows.
\end{proof}

\subsection{Descendants in increasing trees.\label{MOMSEQExampleDesc}}
Increasing trees are labelled trees, where the nodes of a tree of size $n$ are labelled by distinct integers
of the set $\{1,\dots, n\}$ in such a way that each sequence of labels along any branch starting at the root
is increasing. They have been introduce by Bergeron et al.~\cite{BerFlaSal1992}, and can be described combinatorially as follows.
Given a so-called degree-weight sequence $(\varphi_{k})_{k \ge 0}$, the corresponding degree-weight generating function $\varphi(t)$ is defined by $\varphi(t) := \sum_{k \ge 0} \varphi_{k} t^{k}$.
The simple family of increasing trees $\mathcal{T}$ associated with a degree-weight generating function $\varphi(t)$, can be described
by the formal recursive equation 
\begin{equation}
   \label{eqnz0}
   \mathcal{T} = \bigcirc\hspace*{-0.75em}\text{\small{$1$}}\hspace*{0.4em}
   \times \Big(\varphi_{0} \cdot \{\epsilon\} \; \dot{\cup} \;
   \varphi_{1} \cdot \mathcal{T} \; \dot{\cup} \; \varphi_{2} \cdot
   \mathcal{T} \ast \mathcal{T} \; \dot{\cup} \; \varphi_{3} \cdot
   \mathcal{T} \ast \mathcal{T} \ast \mathcal{T} \; \dot{\cup} \; \cdots \Big)
   = \bigcirc\hspace*{-0.75em}\text{\small{$1$}}\hspace*{0.3em} \times \varphi(\mathcal{T}),
\end{equation}
where $\bigcirc\hspace*{-0.75em}\text{\small{$1$}}\hspace*{0.4em}$ denotes the node
labelled by $1$, $\times$ the Cartesian product, $\dot{\cup}$ the disjoint union, $\ast$ the partition product for labelled
objects, and $\varphi(\mathcal{T})$ the substituted structure (see, e.g., the books \cite{FlaSed2009,VitFla1990}). 
Note that the elements of $\mathcal{T}$ are increasing plane trees, and that such a tree of size $n$, whose nodes have outdegrees $d_1,\dots,d_n$, has the weight $\prod_{i=1}^{n}\gf_{d_i}$. When speaking about a random tree of size $n$ from the family $\mathcal{T}$, we always use the random tree model for weighted trees, i.e., we assume that each tree of size $n$ from $\mathcal{T}$ can be chosen with a probability proportional to its weight.

\smallskip

Let $T_n$ be the total weight of all trees from $\mathcal{T}$ of size $n$.
It follows from \eqref{eqnz0} that the exponential generating function
$T(z) := \sum_{n \ge 1} T_{n} \frac{z^{n}}{n!}$ of the total weights
satisfies the autonomous first order differential equation
\begin{equation}
   \label{eqnz1}
   T'(z) = \varphi\big(T(z)\big), \quad T(0)=0.
\end{equation}

From now on we consider tree families $\mathcal{T}$ having degree-weights of one of the
following three forms, as studied by~\cite{PanPro2005+},
where we use the abbreviations \rec{} for recursive trees, 
\gport{} for generalized plane recursive trees (also called generalized plane-oriented recursive trees), and \dinc{} for $d$-ary increasing trees.

\begin{equation}\label{pp}
\varphi(t) = \begin{cases}
				e^{c_1 t}, & \text{for} \enspace c_1>0,
\quad\enspace\text{ for }\text{\rec,}\\
		\frac{\varphi_{0}}
      {(1 + \frac{c_{2} t}{\varphi_{0}})^{-\frac{c_{1}}{c_{2}}-1}}, 
			& \text{for} \enspace \varphi_{0} > 0, \; 0 < -c_{2} <
       c_{1},
\quad\enspace\text{ for }\text{\gport,}\\
       \varphi_{0}
      \Big(1 + \frac{c_{2} t}{\varphi_{0}}\Big)^{d},
			& \text{for} \enspace \varphi_{0},c_2 > 0,    \; d :=
	  \frac{c_{1}}{c_{2}}+1 \in \N\setminus\{1\},
\enspace \text{for \dinc}.
      \end{cases}
\end{equation}
Consequently, by solving \eqref{eqnz1}, we obtain the exponential generating function $T(z)$,
\begin{equation}\label{t}
T(z)=
     \begin{cases}
    \log\Big(\frac1{1-c_1 z}\Big), & \text{for \rec},\\
   \frac{\varphi_{0}}{c_{2}} \Big(\frac{1}{(1-c_{1} z)^{\frac{c_{2}}{c_{1}}}} -1 \Big), & \text{for \gport},\\
    \frac{\varphi_{0}}{c_{2}} \Big(\frac{1}{(1-(d-1) c_{2} z)^{\frac{1}{d-1}}} - 1 \Big), & \text{for \dinc},
    \end{cases}
\end{equation}
and the total weights $T_n$,
\begin{equation}\label{tn}
T_{n} = \varphi_{0} c_{1}^{n-1} (n-1)! \binom{n-1+\frac{c_{2}}{c_{1}}}{n-1}.
\end{equation}
Note that changing $\gf_k$ to $ab^k\gf_k$ for some positive constants
$a$ and $b$ will affect the weights of all trees of a given size $n$
by the same factor $a^n b^{n-1}$, which does not affect the
distribution of a random tree from the family. Hence, when considering
random trees from these three classes, $\gf_0$ is irrelevant and $c_1$
and $c_2$ are relevant only through the ratio $c_1/c_2$. (We may thus,
if we like, normalize $\gf_0=1$ and either $c_1$ or $|c_2|$, but not both.)
It is convenient to set $c_1=1$ for (random) recursive trees, to use the parameter $\alpha :=-1-\frac{c_{1}}{c_{2}} > 0$ 
for (random) generalized plane recursive trees, and
$d :=\frac{c_{1}}{c_{2}}+1 \in{2,3,\dots}$ for (random) $d$-ary
increasing trees, i.e., it suffices to consider the degree-weight generating functions
\begin{equation}\label{pp_simplified}
\varphi(t) = \begin{cases}
				e^{t}, & \enspace \text{for \rec},\\
		\frac{1}{(1-t)^{\alpha}}, & \enspace \text{with $\alpha > 0$}, \enspace \text{for \gport},\\
      (1+t)^{d}, & \enspace \text{with $d = 2, 3, 4, \dots$}, \enspace \text{for \dinc}.
      \end{cases}
\end{equation}
\begin{figure}[!htb]
\centering
\includegraphics[scale=0.7]{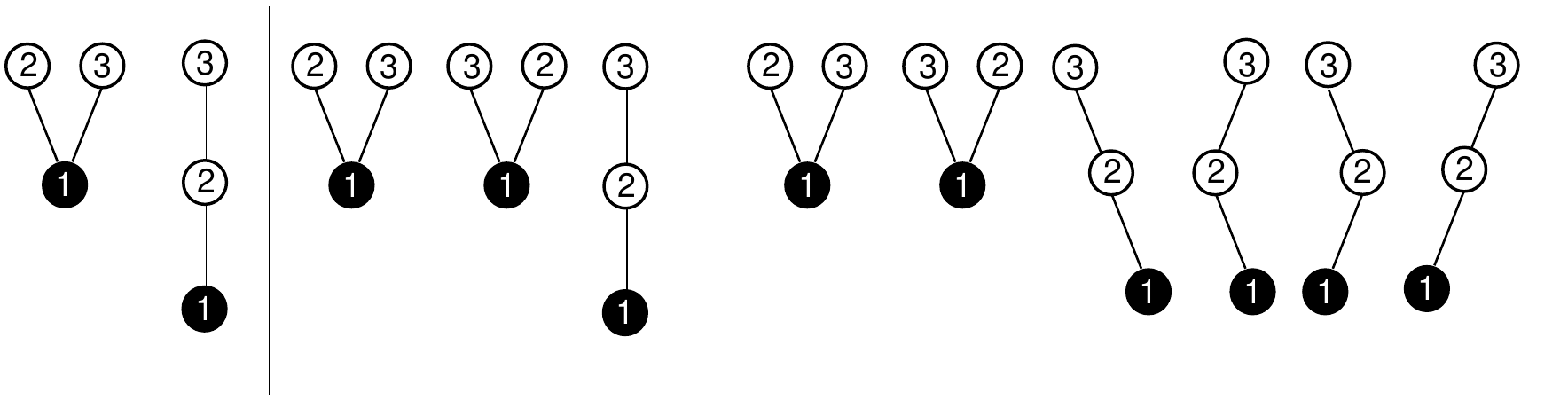}
\caption{Two recursive trees of size three - $\varphi(t)=e^t$ (no left-to-right order); three plane recursive trees of size three - $\varphi(t)=\frac1{1-t}$; six binary increasing trees - $\varphi(t)=(1+t)^2$.}
\label{fig:IncTrees}
\end{figure}
\smallskip

As shown by Panholzer and Prodinger~\cite{PanPro2005+}, random trees
in the three classes of families given in~\eqref{pp_simplified}
can be generated by an evolution process in the following way. 
The process, evolving in discrete time, starts with the root labelled by $1$.
At step $i+1$ the node with label $i+1$ is attached to any previous
node $v$ (with outdegree $d(v)$) of the already grown tree of size
$i$ with probabilities $p(v)$ given by
\begin{equation}\label{pv}
   p(v)=
   \begin{cases}
   \frac{1}{i}, & \text{for \rec},\\
   \frac{d(v)+\alpha}{(\alpha+1)i-1}, & \text{for \gport},\\
   \frac{d-d(v)}{(d-1)i+1}, & \text{for \dinc}.
   \end{cases}
\end{equation}
Moreover, it has been shown in \cite{PanPro2005+} that only random trees from simple families of trees $\mathcal{T}$ given in \eqref{pp} can be generated by such a tree evolution process, i.e., only for these tree families exist suitable attachment probabilities $p(v)$.

\bigskip

Let $D_{n,j}$ denote the random variable, counting the number of descendants, i.e., the size of the subtree rooted at node $j$, of a specific node $j$, with $1 \le j \le n$, in a tree of size $n$. 
In~\cite{Desc-KubPan2005} this random variable has been studied for the three tree families mentioned beforehand using a generating functions approach.  
In the following we collect and somewhat simplify these earlier results.
One obtains a simple exact formula for the factorial moments of $\hat{D}_{n,j}=D_{n,j}-1$ directly from the results of~\cite{Desc-KubPan2005}:
\begin{equation*}
   \E(\fallfak{\hat{D}_{n,j}}s) = s! \frac{\binom{n-j}{s} \binom{s+\frac{c_{2}}{c_{1}}}{s}}{\binom{j-1+\frac{c_{2}}{c_{1}}+s}{s}},
\end{equation*}
with $c_1,c_2$ as given in~\eqref{pp}. Hence, by using Stirling's formula for the Gamma function~\eqref{MOMSEQStirling}, for $n\to\infty$ and $j=j(n)\to \infty$, the factorial moments of $\hat{D}_{n,j}$ are of mixed Poisson type and Lemma~\ref{MOMSEQMainLemma} can be applied.
\begin{coroll}
\label{MOMSEQCorollDesc}
The random variable $\hat{D}_{n,j}$, counting the number of descendants minus one of node $j$ in a random increasing tree of size $n$, has for $n\to\infty$ and $j=j(n)\to \infty$, 
factorial moments of mixed Poisson type with a Gamma mixing distribution $X\law\gamma(1,1+\Cc)$, 
and scale parameter $\lambda_{n,j}=\frac{n-j}{j}$:
\[
\E(\fallfak{\hat{D}_{n,j}}s)= \lambda_{n,j}^s\frac{\Gamma(s+1+\Cc)}{\Gamma(1+\Cc)}(1+o(1)).
\]
\begin{itemize}
\item[(i)] for $\lambda_{n,j}\to\infty$, the random variable $\frac{\hat{D}_{n,j}}{\lambda_{n,j}}$ converges in distribution, with convergence of all moments, to $X$. 

\item[(ii)] for $\lambda_{n,j}\to\rho\in(0,\infty)$, the random variable $\hat{D}_{n,j}$ converges in distribution, with convergence of all moments, to a mixed Poisson distributed random variable $Y\law \MPo(\rho X)$, 
which has a negative binomial distribution. 
\end{itemize}
\end{coroll}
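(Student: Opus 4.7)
The plan is to start from the exact formula for $\E(\fallfak{\hat{D}_{n,j}}s)$ given just above the statement, extract its asymptotic form as $n\to\infty$ with $j=j(n)\to\infty$, and then invoke Lemma~\ref{MOMSEQMainLemma} to read off the two phases.

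For the asymptotics I would treat the three binomial factors separately. Stirling's formula~\eqref{MOMSEQStirling} yields $\binom{n-j}{s}\sim (n-j)^{s}/s!$ and $\binom{j-1+\Cc+s}{s}\sim j^{s}/s!$ as $n-j\to\infty$ and $j\to\infty$, while the factor $\binom{s+\Cc}{s}=\Gamma(s+1+\Cc)/(s!\,\Gamma(1+\Cc))$ is independent of $n$ and $j$. Combining these expansions yields
\[
\E(\fallfak{\hat{D}_{n,j}}s)=\Big(\frac{n-j}{j}\Big)^{s}\frac{\Gamma(s+1+\Cc)}{\Gamma(1+\Cc)}\bigl(1+o(1)\bigr),
\]
so the factorial moments are asymptotically of mixed Poisson type in the sense of Lemma~\ref{MOMSEQMainLemma}, with scale parameter $\lambda_{n,j}=(n-j)/j$ and limiting moment sequence $\mu_{s}=\Gamma(s+1+\Cc)/\Gamma(1+\Cc)$. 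I would recognise this as the moment sequence of the Gamma distribution $X\law\gamma(1,1+\Cc)$ (shape $1+\Cc$, unit scale); since the moment generating function of $X$ exists in a neighbourhood of the origin, Carleman's condition is trivially satisfied and $X$ is uniquely determined by $(\mu_{s})_{s\in\N}$.

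Lemma~\ref{MOMSEQMainLemma} then immediately yields assertion (i) and that the limit in (ii) is mixed Poisson $\MPo(\rho X)$ with convergence of all moments. To identify this limit as a negative binomial distribution I would invoke the $\mathrm{NegBin}(r,p)$ example from Section~\ref{secMixedPoisson}: a Poisson mixture with a Gamma mixing distribution is negative binomial, here with parameters $r=1+\Cc$ and $p=\rho/(1+\rho)$. There is no genuine obstacle in the argument; the whole proof is a direct instantiation of the general framework set up earlier in the paper, and the only mild care required is in matching the Gamma and negative binomial parameterizations used in the corresponding examples of Section~\ref{secMixedPoisson}.
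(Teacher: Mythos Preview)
Your proposal is correct and follows essentially the same approach as the paper: the paper's own argument consists of the single sentence that Stirling's formula~\eqref{MOMSEQStirling} applied to the exact moment formula shows the factorial moments are of mixed Poisson type, after which Lemma~\ref{MOMSEQMainLemma} applies. Your write-up simply fills in the routine details of this computation and of the identification of the Gamma and negative binomial laws via the examples in Section~\ref{secMixedPoisson}.
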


\begin{remark}
\label{MOMSEQExampleDescRemark}
Note that for fixed $n$ the random variable $\hat{D}_{n,j}/n\claw B_j$, where $B_j\law \beta(1+\Cc,j-1)$, is asymptotically beta-distributed (see~\cite{Desc-KubPan2005}). 
One readily recovers the mixing distribution $X$ from $B_j$ by taking the limit $j\to\infty$, using a well known result for beta-distributed random variables:
\[
j B_j\to X, \quad \text{for }j\to\infty,
\]
with convergence of all moments.
\end{remark}

\begin{remark}
Panholzer and Seitz~\cite{PanSeitz2012} studied labelled families of evolving $k$-tree models, generalizing simple families of increasing trees. 
An identical phase change and factorial moments of mixed Poisson type with a Gamma mixing distribution can be observed when studying the number of descendants of specific nodes in labelled families of evolving $k$-tree models.
\end{remark}

The parameter ``descendants of node $j$'' can be modelled also via urn models: we encounter classical P\'olya urns with \emph{non-standard initial values}, depending on the number of draws.
Note that Mahmoud and Smythe~\cite{MahSmy1991} used a similar approach to study the descendants of node $j$ in recursive trees, for $j$ fixed and $n \to \infty$.

In order to get an urn model description of the number of descendants of node $j$ one considers the tree evolution process generating random trees of the tree families studied. The probabilities $p(v)$ given in \eqref{pv} can then be translated into the ball replacement matrix of a two-colour urn model. Alternatively, for this task one can use combinatorial descriptions of these tree families, which we will here only figure out for the case $\dinc$, i.e., $d$-ary increasing trees; the other cases can be treated similarly. 

Consider such a $d$-ary increasing tree of size $j$: there are $1+(d-1)j$ possible attachment positions (often drawn as external nodes), where a new node can be attached. Exactly $d$ such attachment positions are contained in the subtree rooted $j$, whereas the other $(d-1)(j-1)$ are not. In the urn model description we will use balls of two colours, black and white. Each white ball will correspond to an attachment position contained in the subtree rooted $j$, whereas each black ball will correspond to an attachment position that is not contained in the subtree rooted $j$, which we call here ``remaining tree''. This already describes the initial conditions of the urn. 

Moreover, during the tree evolution process, when attaching a new node to a position in the subtree of $j$, then there appear $d-1$ new attachment positions in this subtree, whereas when attaching a new node to a position in the remaining tree, then there appear $d-1$ new attachment positions in the remaining tree. In the urn model description this simply means that when drawing a white ball one adds $d-1$ white balls and when drawing a black ball one adds $d-1$ black balls to the urn. After $n-j$ draws, which correspond to the $n-j$ attachments of nodes in the tree, the number of white balls in the urn is linearly related to the size of the subtree rooted $j$ in a tree of size $n$.
Thus, the following urn model description follows immediately.
\begin{urn}
Consider a P\'olya urn with ball replacement matrix
\[
M = \left(
\begin{matrix}
\kappa & 0\\
0 & \kappa
\end{matrix}
\right), \qquad 
\kappa =
\begin{cases}
1, &\rec,\\
1+\alpha, &\gport,\\
d-1, &\dinc,\\
\end{cases}
\]
and initial conditions
\[
W_0= 
\begin{cases}
1, \\
\alpha,\\ 
d,\\
\end{cases}
\qquad 
B_0= 
\begin{cases}
j-1, &\rec,\\
(j-1)(1+\alpha), &\gport,\\
(j-1)(d-1), & \dinc,\\
\end{cases}
\]
for $1\le j\le n$. The number $D_{n,j}$ of descendants of node $j$ in an increasing tree of size $n$ has the same distribution as the (shifted and scaled) number of white balls $W_{n-j}$ in the P\'olya urn after $n-j$ draws,
\[
D_{n,j}\law 
\begin{cases}
W_{n-j}, &\rec,\\
(W_{n-j}+1)/\kappa, &\gport,\\
(W_{n-j}-1)/\kappa, &\dinc.
\end{cases}
\]
\end{urn}

This implies that the number of white balls in the standard P\'olya urn model exhibit a phase transition according to the growth of the initial number of black balls present in the urn compared to the discrete time.

\subsection{Node-degrees in plane recursive trees.\label{MOMSEQExampleNodedegs}}
Let $X_{n,j}$ denote the random variable counting the outdegree of node $j$ in a generalized plane recursive tree of size $n$, i.e., a size-$n$ tree from the increasing tree family {\gport} defined in Section~\ref{MOMSEQExampleDesc}.
\begin{figure}[!htb]
\centering
  \includegraphics[scale=0.6]{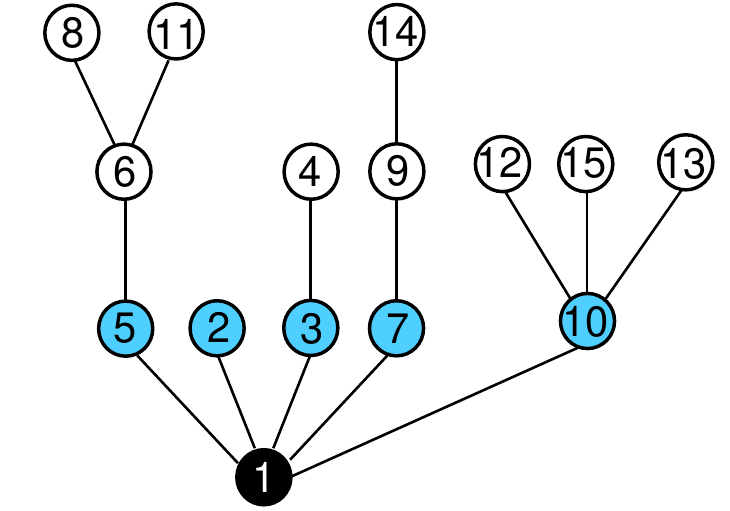} 
\caption{A size $15$ plane recursive tree, where the root node $j=1$ has outdegree five.\label{fig:PortNodeDegree}}
\end{figure}

It has been shown in~\cite{KubPan2007} using a generating functions approach that the factorial moments of the random variable $X_{n,j}$ are given by
\begin{equation*}
\E(\fallfak{X_{n,j}}{s})= \frac{\Gamma(s+\alpha)}{\Gamma(\alpha)} \sum_{k=0}^s\binom{s}{k}(-1)^k \frac{\Gamma\big(n+\frac{s-1-k}{1+\alpha}\big)
   \Gamma\big(j-\frac{1}{1+\alpha}\big)}{\Gamma\big(j+\frac{s-1-k}{1+\alpha}\big)\Gamma\big(n-\frac{1}{1+\alpha}\big)},
\end{equation*}
for $j\ge 2$, with $\alpha$ given in definition \eqref{pp_simplified} of the degree-weight generating function $\varphi(t)$. Lemma~\ref{MOMSEQMainLemma} and an application of Stirling's formula for the Gamma function~\eqref{MOMSEQStirling} leads to the following result.
\begin{coroll}
\label{MOMSEQCorollNodedeg}
The random variable $X_{n,j}$, counting the outdegree of node $j$ in a random generalized plane recursive tree of size $n$, $2 \le j\le n$, has for $n\to\infty$ and $j=j(n)\to \infty$, 
falling factorial moments of mixed Poisson type with a Gamma mixing distribution $X\law\gamma(1,\alpha)$, 
and scale parameter $\lambda_{n,j}=\Big(\frac{n}{j}\Big)^{1/(\alpha+1)}-1$:
\[
\E(\fallfak{X_{n,j}}s)= \lambda_{n,j}^s\frac{\Gamma(s+\alpha)}{\Gamma(\alpha)}(1+o(1)).
\]
\begin{itemize}
\item[(i)] for $\lambda_{n,j}\to\infty$, the random variable $\frac{X_{n,j}}{\lambda_{n,j}}$ converges in distribution, with convergence of all moments, to $X$. 

\item[(ii)] for $\lambda_{n,j}\to\rho\in(0,\infty)$, the random variable $X_{n,j}$ converges in distribution, with convergence of all moments, to a mixed Poisson distributed random variable $Y\law \MPo(\rho X)$, 
which has a negative binomial distribution. 
\end{itemize}
\end{coroll}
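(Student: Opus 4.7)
The plan is to start from the exact formula for $\E(\fallfak{X_{n,j}}{s})$ recalled just above the statement and extract the asymptotic leading term by applying Stirling's formula~\eqref{MOMSEQStirling} to each ratio of Gamma functions, then to invoke Lemma~\ref{MOMSEQMainLemma} to conclude both convergence results. Throughout, $s$ is kept fixed while $n,j\to\infty$, so the sum over $k$ has only $s+1$ terms and uniformity in $k$ is automatic.

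For the first step, I would write the generic ratio in the exact formula as
\[
\frac{\Gamma\bigl(n+\tfrac{s-1-k}{\alpha+1}\bigr)}{\Gamma\bigl(n-\tfrac{1}{\alpha+1}\bigr)}\cdot\frac{\Gamma\bigl(j-\tfrac{1}{\alpha+1}\bigr)}{\Gamma\bigl(j+\tfrac{s-1-k}{\alpha+1}\bigr)}
= n^{\frac{s-k}{\alpha+1}}\, j^{-\frac{s-k}{\alpha+1}}\bigl(1+o(1)\bigr),
\]
since $\frac{s-1-k}{\alpha+1}-\bigl(-\frac{1}{\alpha+1}\bigr)=\frac{s-k}{\alpha+1}$. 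Setting $u:=u_{n,j}=(n/j)^{1/(\alpha+1)}$, the inner sum telescopes through the binomial theorem:
\[
\sum_{k=0}^{s}\binom{s}{k}(-1)^{k}u^{s-k}=(u-1)^{s}=\lambda_{n,j}^{\,s}.
\]
Combined with the prefactor $\Gamma(s+\alpha)/\Gamma(\alpha)$ this yields precisely the asymptotic expansion claimed in the statement,
\[
\E(\fallfak{X_{n,j}}{s})=\lambda_{n,j}^{\,s}\,\frac{\Gamma(s+\alpha)}{\Gamma(\alpha)}\bigl(1+o(1)\bigr).
\]

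Next I would identify the mixing distribution. The numbers $\mu_s=\Gamma(s+\alpha)/\Gamma(\alpha)$ are exactly the power moments of a Gamma random variable $X\law\gamma(1,\alpha)$, whose moment generating function $\psi(z)=(1-z)^{-\alpha}$ is analytic on a neighbourhood of $0$. Hence $X$ satisfies Carleman's condition~\eqref{MOMSEQmmCarleman}, and the hypotheses of Lemma~\ref{MOMSEQMainLemma} are met. Part~(i) then gives $X_{n,j}/\lambda_{n,j}\claw X$ whenever $\lambda_{n,j}\to\infty$, and part~(ii) gives $X_{n,j}\claw Y\law\MPo(\rho X)$ whenever $\lambda_{n,j}\to\rho\in(0,\infty)$, both with convergence of all moments. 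The identification of $Y$ as a negative binomial random variable is immediate from the example following Definition~\ref{MOMSEQdef1}, which shows that a mixed Poisson with Gamma mixing distribution is precisely $\text{NegBin}(\alpha,\rho/(1+\rho))$.

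The only genuinely delicate point is the Stirling expansion, but since $j=j(n)\to\infty$ by assumption and $s$ is fixed, the error terms are of smaller order than the leading contribution $\lambda_{n,j}^{\,s}$ in every regime where $\lambda_{n,j}$ is bounded away from $0$ (which is the only regime relevant for (i) and (ii)). The algebraic collapse of the alternating $k$-sum into $(u-1)^{s}$ via the binomial theorem is what makes the asymptotic formula so clean, and is the structural reason the mixed Poisson picture of Lemma~\ref{MOMSEQMainLemma} applies without further work.
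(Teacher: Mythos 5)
Your proposal is correct and follows exactly the route the paper intends: the paper itself only remarks that the corollary follows from the displayed exact formula for $\E(\fallfak{X_{n,j}}{s})$ together with Stirling's formula~\eqref{MOMSEQStirling} and Lemma~\ref{MOMSEQMainLemma}, and your write-up is a faithful elaboration of that, including the collapse of the alternating sum to $(u-1)^s$ and the correct observation that the $o(1)$ errors are harmless because $\lambda_{n,j}$ is bounded away from zero in both regimes (i) and (ii).
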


\begin{remark}
\label{remarkNodeDegrees}
For fixed $j$, independent of $n$, a different limit law arises (compare with Remark~\ref{MOMSEQRemarkAdditionalPhase}): the random variable $X_{n,j}/n^{1/(\alpha+1)}$ converges for $n\to\infty$ in distribution to a random variable $Z_j$ characterized by its moments or by its  density function; see~\cite{KubPan2007} and also Corollary~\ref{MOMSEQCorollRefinedNodedeg} for details.
\end{remark}

The random variable $X_{n,j}$ also allows an urn model description. To get it we consider the tree evolution process for the family {\gport}, with $p(v) = \frac{d(v)+\alpha}{(\alpha+1)i-1}$, where $p(v)$ gives the probability that the new node $i+1$ will be attached to node $v$ in a tree of size $i$, depending on the outdegree $d(v)$ of $v$. We see that the probability $p(v)$ is proportional to $d(v)+\alpha$. Thus let us think about the quantity $d(v)+\alpha$ as the affinity of node $v$ attracting a new node. The total affinity of all nodes $v_{k}$ in a tree $T$ of size $i$ is then given by $\sum_{1 \le k \le i} (d(v_{k})+\alpha) = i-1+\alpha i= (\alpha+1)i-1$ giving an interpretation of the denominator of $p(v)$. In the two-colour urn model we use white balls describing the affinity of node $j$ to attract new nodes, whereas the black balls describe the affinity of all remaining nodes in the tree to attract new nodes. When considering a tree of size $j$, it holds $d(j) = 0$ and thus that node $j$ has affinity $\alpha$, whereas all remaining nodes have total affinity $(\alpha+1)j-1-\alpha = (\alpha+1)(j-1)$ to attract a new node. This already yields the initial conditions of the urn.

Each time a new node is attached during the tree evolution process the total affinity of all nodes increases by $\alpha+1$: $\alpha$ is the affinity of the new node (which has outdegree $0$) and by attaching this node the outdegree and thus affinity of one node increases by one.
In particular, when a new node is attached to node $j$ the affinity of $j$ increases by one, whereas the total affinity of the remaining nodes increases by $\alpha$, but if a new node is attached to another node the affinity of $j$ remains unchanged. Thus in the urn model description, when drawing a white ball, one white ball and $\alpha$ black balls are added, and when drawing a black ball, $\alpha+1$ black balls are added to the urn. The following description is then immediate.

\begin{urn}\label{urn:node_degrees}
Consider a balanced triangular urn with ball replacement matrix
\[
M = \left(
\begin{matrix}
1 & \alpha\\
0 & 1+\alpha
\end{matrix}
\right),
\qquad W_0=\alpha,\quad B_0=(\alpha+1)(j-1),
\]
for $1\le j\le n$. The outdegree $X_{n,j}$ of node $j$ in a generalized plane recursive tree of size $n$ has the same distribution as the shifted number of white balls $W_{n-j}$ in the P\'olya urn after $n-j$ draws,
\[
X_{n,j}\law W_{n-j}-\alpha.
\]
\end{urn}

This implies that the number of white balls in the standard P\'olya urn model exhibit several phase transitions according to the growth of the initial number of black balls present in the urn with respect 
to the total number of draws; this will be discussed in detail in a more general setting in Section~\ref{MOMSEQtriangular}.

\subsection{Branching structures in plane recursive trees}
Let $X_{n,j,k}$ denote the random variable, which counts the number of size-$k$ branches (= subtrees of size $k$)
attached to the node labelled $j$ in a random increasing tree of size n. 
The random variables $X_{n,j,k}$ are thus related to the random variable $X_{n,j}$ studied in Section~\ref{MOMSEQExampleNodedegs}, counting the outdegree of node labelled $j$, via
\[
X_{n,j}=\sum_{k=1}^{n-j}X_{n,j,k}.
\] 

\begin{figure}[!htb]
\centering
    \includegraphics[scale=0.6]{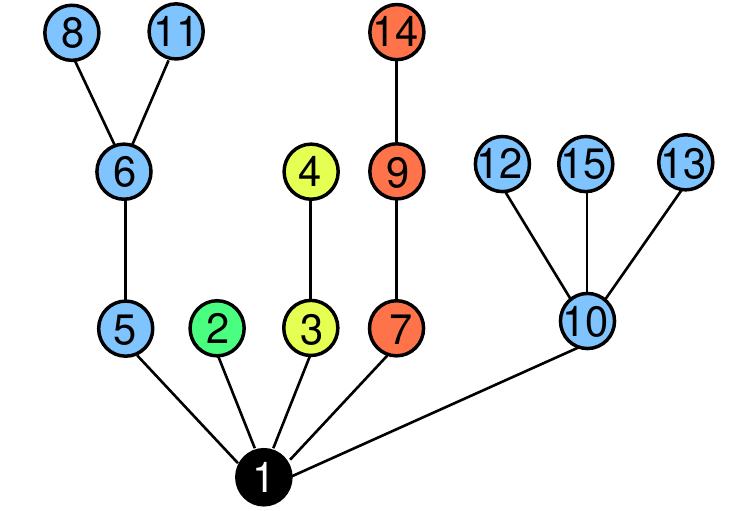} 
\caption{A size $15$ plane recursive tree, where the root node has one size-one, one size-two, one size three and two size-four branches.\label{fig:PortSubtreeSizes}}
\end{figure}

This parameter was studied in Su et al.~\cite{Su} for the particular case of the root node $j = 1$ and the instance of random recursive trees:
they derived the distribution of $X_{n,1,k}$ and a limit law for it. Furthermore they stated results for joint distributions.
The analysis was extended in~\cite{KubPan2006} to increasing tree families generated by a natural growth process (see Section~\ref{MOMSEQExampleDesc}). In particular, for the family {\gport} of generalized plane recursive trees with parameter $\alpha$ given in \eqref{pp_simplified}, the following result for the factorial moments of $X_{n,j,k}$ was obtained:
\[
\E(\fallfak{X_{n,j,k}}s)=\bigg(\frac{  \binom{k-\frac{1}{\alpha+1}}{k-1}}{(\alpha+1) k}\bigg)^s \cdot 
     \frac{\Gamma(s+\alpha)}{\Gamma(\alpha)}
     \frac{\binom{j-1-\frac{1}{\alpha+1}}{j-1}\binom{n-ks-1+\frac{s-1}{\alpha+1}}{n-j-ks}}{\binom{n-1}{j-1} \binom{n-1-\frac{1}{\alpha+1}}{n-1}}.
\]
In~\cite{KubPan2006} only the case of fixed $k$ was considered. We can easily use Lemma~\ref{MOMSEQMainLemma} and Stirling's formula for the Gamma function~\eqref{MOMSEQStirling} to extend the studies given there and to obtain the following result.
\begin{coroll}
\label{MOMSEQCorollRefinedNodedeg}
The random variable $X_{n,j,k}$, counting the number of size-$k$ branches attached to node $j$ in a random generalized plane recursive tree of size $n$, has for $j$ fixed, $n\to\infty$ and $1\le k \le n-j$, 
falling factorial moments of mixed Poisson type with mixing distribution $Z_j$ supported on $[0,\infty)$, uniquely defined by its 
moment sequence $(\mu_s)_{s\in\N}=\Big(\frac{\Gamma(s+\alpha)\Gamma(j-\frac{1}{\alpha+1})}{\Gamma(\alpha)\Gamma(j+\frac{s-1}{\alpha+1})}\Big)_{s\in\N}$, and scale parameter $\lambda_{n,j,k}=\frac{n^{\frac1{\alpha+1}}  \binom{k-\frac{1}{\alpha+1}}{k-1}}{(\alpha+1) k}$:
\[
\E(\fallfak{X_{n,j,k}}s)= \lambda_{n,j,k}^s\cdot \frac{\Gamma(s+\alpha)\Gamma(j-\frac{1}{\alpha+1})}{\Gamma(\alpha)\Gamma(j+\frac{s-1}{\alpha+1})}(1+o(1)).
\]
\begin{itemize}
\item[(i)] for $\lambda_{n,j,k}\to\infty$, the random variable $\frac{X_{n,j,k}}{\lambda_{n,j,k}}$ converges in distribution, with convergence of all moments, to $Z_j$. 

\item[(ii)] for $\lambda_{n,j,k}\to\rho\in(0,\infty)$, the random variable $X_{n,j,k}$ converges in distribution, with convergence of all moments, to a mixed Poisson distributed random variable $Y\law \MPo(\rho Z_j)$.
\end{itemize}
\end{coroll}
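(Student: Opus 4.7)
My plan is to follow the template already established in Corollaries~\ref{MOMSEQCorollExampleBlocks}--\ref{MOMSEQCorollNodedeg}: starting from the explicit factorial moment formula given just above the statement, I extract its large-$n$ asymptotics by repeated use of Stirling's formula~\eqref{MOMSEQStirling} for the Gamma function, match the result against the claimed expression $\lambda_{n,j,k}^s\cdot\mu_s\cdot(1+o(1))$, verify Carleman's condition for the limiting moment sequence, and then invoke Lemma~\ref{MOMSEQMainLemma} to obtain the two limit laws.

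For the asymptotic step, I first rewrite every binomial coefficient as a ratio of Gamma functions via $\binom{x}{y}=\Gamma(x+1)/(\Gamma(y+1)\Gamma(x-y+1))$. The prefactor $\bigl(\binom{k-1/(\alpha+1)}{k-1}/((\alpha+1)k)\bigr)^s$ and the factor $\Gamma(s+\alpha)/\Gamma(\alpha)$ are already in their final form, and the $j$-dependent constant $\binom{j-1-1/(\alpha+1)}{j-1}$ contributes the factor $\Gamma(j-\tfrac1{\alpha+1})$ that appears in the target $\mu_s$. The whole asymptotic problem therefore reduces to showing that, for $j$ fixed, $n\to\infty$, and uniformly in $1\le k\le n-j$,
\[
R_{n,k,s}:=\frac{\binom{n-ks-1+\frac{s-1}{\alpha+1}}{n-j-ks}}{\binom{n-1}{j-1}\binom{n-1-\frac{1}{\alpha+1}}{n-1}}=\frac{n^{s/(\alpha+1)}}{\Gamma(j+\frac{s-1}{\alpha+1})}\bigl(1+o(1)\bigr).
\]
Stirling's formula applied to the four Gamma functions arising in $R_{n,k,s}$ produces polynomial and exponential pieces in $n$ and $n-ks$, and the exponential pieces cancel after recognising $(1-ks/n)^{n-ks+O(1)}\sim e^{-ks}$. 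Collecting the remaining polynomial pieces gives the $n^{s/(\alpha+1)}$ scaling and the Gamma quotient on the right, which together with the $k$-dependent prefactor reproduces exactly $\lambda_{n,j,k}^s\cdot\mu_s$.

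For the uniqueness step, I verify Carleman's criterion~\eqref{MOMSEQmmCarleman} for
\[
\mu_s=\frac{\Gamma(s+\alpha)\,\Gamma(j-\tfrac{1}{\alpha+1})}{\Gamma(\alpha)\,\Gamma(j+\tfrac{s-1}{\alpha+1})}.
\]
Stirling gives $\mu_s^{1/s}=\Theta\bigl(s^{1-1/(\alpha+1)}\bigr)$, so $\mu_{2s}^{-1/(2s)}=\Theta\bigl(s^{-(1-1/(\alpha+1))}\bigr)$ and the corresponding series diverges because $1-1/(\alpha+1)\le 1$; hence $Z_j$ is uniquely determined by $(\mu_s)$. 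Parts~(i) and~(ii) now follow by direct application of Lemma~\ref{MOMSEQMainLemma} with $\lambda_n$ replaced by $\lambda_{n,j,k}$ and $\mu_s$ as above.

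The one delicate point, and the main obstacle, is the uniform-in-$k$ quality of the Stirling expansion for $R_{n,k,s}$: since $k$ may grow with $n$, the Gamma functions at arguments of size $n-ks$ can become small, and one has to control the error terms of Stirling's formula across the whole range $1\le k\le n-j$ rather than only for fixed $k$. The cleanest way to handle this is to expand the numerator binomial as an explicit product of $s$ consecutive factors of the form $n-ks+O(1)$ and pair them off with matching factors in the denominator binomials; the $k$-dependence then drops out at leading order except through $\lambda_{n,j,k}$ itself, while the $n$-dependence collapses to a clean factor of $n^{s/(\alpha+1)}$, completing the argument.
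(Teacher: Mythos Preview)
Your approach is exactly what the paper does: it simply says ``use Lemma~\ref{MOMSEQMainLemma} and Stirling's formula~\eqref{MOMSEQStirling}'' without spelling out details, and you are filling in precisely that computation plus the Carleman check. Two small corrections to your sketch are worth making. First, your displayed target for $R_{n,k,s}$ is missing the constant factor $\Gamma(j)\,\Gamma\bigl(1-\tfrac{1}{\alpha+1}\bigr)$ coming from $\binom{n-1}{j-1}\sim n^{j-1}/(j-1)!$ and $\binom{n-1-\frac{1}{\alpha+1}}{n-1}\sim n^{-1/(\alpha+1)}/\Gamma(1-\tfrac{1}{\alpha+1})$; these constants are exactly what combine with the leftover pieces of $\binom{j-1-\frac{1}{\alpha+1}}{j-1}$ to produce the clean $\Gamma(j-\tfrac{1}{\alpha+1})$ in $\mu_s$. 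Second, the sentence about ``exponential pieces cancel after recognising $(1-ks/n)^{n-ks+O(1)}\sim e^{-ks}$'' is spurious: when you take Stirling on the ratio $\Gamma(n-ks+a)/\Gamma(n-ks+b)$ the factors $e^{-(n-ks)}$ cancel identically and you go straight to $(n-ks)^{a-b}$; no approximation of the type you wrote (which would in any case fail once $ks\gg\sqrt{n}$) is needed. With $j$ fixed and $k$ restricted to the ranges relevant for (i) and (ii) one automatically has $ks=o(n)$, so $(n-ks)^{a-b}\sim n^{a-b}$ and the uniformity concern you raise dissolves; the corollary's asymptotic display should be read in that regime.
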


\begin{remark}
The results above can be generalized to growing $j=j(n)$, leading to results similar to our earlier findings for the ordinary outdegree
$X_{n,j}$. The random variable $Z_j$ is exactly the limit law of $X_{n,j}/n^{1/(\alpha+1)}$ for fixed $j$ as discussed in Remark~\ref{remarkNodeDegrees}. The density functions $f_j(x)$ of the $Z_j$, $j\in\N$, are known explicitly, see~\cite{KubPan2007}.
\end{remark}

We can interpret our findings in terms of an urn model reminiscent to the urn model for block sizes in $k$-Stirling permutations.
To do this we can extend the description given in Section~\ref{MOMSEQExampleNodedegs} leading to Urn~\ref{urn:node_degrees}.
Namely, we require a refinement of describing the affinities of the nodes in a tree to attract new nodes during the tree evolution process.

For doing that we use balls of $k+2$ different colours. Balls of colour $0$ describe the affinity of node labelled $j$ attracting a new node to become attached at $j$. Furthermore, balls of colour $i$, with $1 \le i \le k$, describe the affinity of attracting a new node of nodes that are contained in branches of size $i$ attached to node $j$, whereas balls of colour $k+1$ describe the affinity of attracting a new node of all remaining nodes, i.e., of nodes that are not contained in the subtree rooted $j$ or nodes contained in a branch of size $\ge k+1$ attached to $j$. Consider a generalized plane recursive tree of size $j$: node $j$ has affinity $\alpha$ to attract a new node, whereas the total affinity of all remaining nodes is $(\alpha+1)(j-1)$; this characterizes the initial configuration of the urn. 

The ball replacement matrix of the urn can be obtained as follows. When drawing a ball of colour $0$, i.e., when attaching a new node to $j$, the node degree of $j$ increases by one and there appears a new branch of size $1$ attached to node $j$, which means that we add one ball of colour $0$ and $\alpha$ balls of colour $1$.
When drawing a ball of colour $i$, with $1 \le i \le k$, i.e., when attaching a new node to a size-$i$ branch attached to $j$, this branch transforms into a size-$(i+1)$ branch attached to $j$, which means that we remove $(\alpha+1)i-1$ balls of colour $i$ (which corresponds to the affinity of a size-$i$ branch) and add $(\alpha+1)(i+1)-1$ balls of colour $i+1$ (which thus correspond to the affinity of a size-$(i+1)$ branch). Furthermore, when drawing a ball of colour $k+1$, i.e., when attaching a new node to node not contained in the subtree rooted $j$ or when attaching a new node to a node contained in a branch of size $\ge k+1$ attached to node $j$, this does neither affect the affinity of node $j$ nor of its branches of sizes $\le k$, which means that we add $\alpha+1$ balls of colour $k+1$. The following urn model description immediately follows.

\begin{urn}
Consider a balanced urn with balls of $k+2$ colors and let the random vector $(U_{n,0},U_{n,1},\dots,U_{n,k+1})$ count the number of balls of each color at time $n$ with $(k+2) \times (k+2)$ ball replacement matrix $M$ given by
\begin{equation*}
    M = \left(
    \begin{smallmatrix}
        1 & \alpha & 0 & \cdots & 0 & 0 & 0& 0 \\[-1ex]
        0 & -\alpha & 2(\alpha+1)-1 & \ddots & \ddots & \ddots & 0& 0 \\[-1ex]
        0 & 0 & -(2(\alpha+1)-1) & 3(\alpha+1)-1 & \ddots & \ddots & 0 & 0\\[-1ex]
        \vdots & \ddots & \ddots & \ddots & \ddots &
        \ddots & \vdots & 0\\[-1ex]
        \vdots & \ddots & \ddots & \ddots & \ddots &
        \ddots & \vdots & 0\\[-1ex]
                0 & \ddots & \ddots & \ddots & 0 & -((\alpha+1)(k-1)-1) & (\alpha+1)k-1& 0\\[-1ex]
        0 & \ddots & \ddots & \ddots & 0 & 0 & -((\alpha+1)k-1) & (\alpha+1)(k+1)-1\\
        0 & 0 & 0 & \cdots & 0 & 0 & 0& 1+\alpha
    \end{smallmatrix}
    \right).
\end{equation*}
The initial configuration of the urn (it is convenient to start here at time $0$) is given by
$(U_{0,0},U_{0,1},\dots,U_{0,k+1})=(\alpha,0,\dots,0,(\alpha+1)(j-1))$. The random variables $U_{n,i}$, with $1 \le i \le k$, described by the urn model are related to the random variables $X_{n,j,i}$, $1 \le i \le k$, which count the number of size-$i$ branches 
attached to the node labelled $j$ in a random generalized plane recursive tree of size n, as follows:
\begin{equation*}
   U_{n-j,i} = ((\alpha+1)i-1) X_{n,j,i}, \quad 1\le i\le k.
\end{equation*}
Moreover, $U_{n-j,0}$ is related to the outdegree $X_{n,j}$ via $U_{n-j,0}=X_{n,j}+\alpha$.
\end{urn}
This implies that the random variables $U_{n,i}$ occurring in the urn model undergo a phase transition according to the growth of $k$ with respect to $n$, from continuous to discrete.

\subsection{Distribution of table sizes in the Chinese restaurant process}
The Chinese restaurant process with parameters $a$ and $\theta$ is a discrete-time stochastic process, whose value at any positive integer time $n$ is one of the $B_n$ partitions of the set $[n]=\{1, 2, 3,\dots , n\}$ (see Pitman~\cite{Pitman}). The parameters $a$ and $\theta$ satisfy $0< a <1$ and $\theta >-a$. Here $B_n$ denotes the Bell number counting the number of partitions of an $n$-element set $B_0=B_1=1$, $B_2=2$, $B_3=5$, etc.\footnote{See sequence \href{http://oeis.org/A000110}{A000110} in OEIS.}
One imagines a Chinese restaurant with an infinite number of tables, and each table has an infinite number of seats. In the beginning the first customer takes place at the first table. 
At each discrete time step a new customer arrives and either joins one of the existing tables, or he takes place at the next empty table in line.
Each table corresponds to a block of a random partition. In the beginning at time $n = 1$, the trivial partition $\{ \{1\} \}$ is obtained with probability 1. Given a partition $T$ of $[n]$ with $|T|=k$ parts $t_i$, $1 \le i \le k \le n$, of sizes $|t_i|$. At time $n + 1$  the element $n + 1$ is either added to one of the existing parts $t_i\in T$ with probability
\[
\P\{n+1<_c t_i\}=\frac{|t_i|-a}{n+\theta},\quad 1\le i\le k,
\]
or added to the partition $T$ as a new singleton block with probability
\[
\P\{n+1<_c t_{|T|+1}\}=\frac{\theta+k \cdot a}{n+\theta}.
\]
This model thus assigns a probability to any particular partition $T$ of $[n]$. We are interested in the distribution of the random variable $C_{n,j}$, counting the number of parts of size $j$ in a partition of $[n]$ generated by the Chinese restaurant process. 

\smallskip

We will not directly study the Chinese restaurant process, but in order to analyse the number of tables of a certain size we study instead a variant of the growth rule for generalized plane recursive trees as introduced in Section~\ref{MOMSEQExampleDesc}. 
Combinatorially, we consider a family $\mathcal{T}_{\alpha,\beta}$ of generalized plane recursive trees, where the degree-weight generating function $\vartheta(t)=\frac{1}{(1-t)^\beta}$, $\beta>0$, associated to the root of the tree, is different to the one for non-root nodes in the tree, $\varphi(t)=\frac{1}{(1-t)^\alpha}$, $\alpha>0$. Then, the family $\mathcal{T}_{\alpha,\beta}$ is closely related to the corresponding family $\mathcal{T}$ of generalized plane recursive trees with degree-weight generating $\varphi(t)=\frac{1}{(1-t)^\alpha}$, $\alpha>0$, via the following formal recursive equations:
\begin{equation}
\begin{split}
 \mathcal{T}_{\alpha,\beta} = \bigcirc\hspace*{-0.75em}\text{\small{$1$}}\hspace*{0.3em} \times \vartheta(\mathcal{T}),
\qquad \mathcal{T} = \bigcirc\hspace*{-0.75em}\text{\small{$1$}}\hspace*{0.3em} \times \varphi(\mathcal{T}).
\end{split}
\end{equation}
The weight $w(T)$ of a tree $T \in \mathcal{T}_{\alpha,\beta}$ is then defined as
\[
w(T) := \vartheta_{d(\text{root})}\prod_{v\in T\setminus\{\text{root}\}} \varphi_{d(v)},
\]
where $d(v)$ denotes the outdegree of node $v$. Thus, the generating functions
$T_{\alpha,\beta}(z)=\sum_{n\ge 1}T_{\alpha,\beta;n}\frac{z^n}{n!}$ and $T(z)=\sum_{n\ge 1}T_n\frac{z^n}{n!}$ of the total weight of size-$n$ trees in $\mathcal{T}_{\alpha,\beta}$ and $\mathcal{T}$, respectively, satisfies the differential equations
\[
T'_{\alpha,\beta}(z)=\vartheta(T(z)),\qquad T'(z)=\varphi(T(z)).
\]

Moreover, the tree evolution process to generate a random tree of arbitrary given size in the family $\mathcal{T}$ described in Section~\ref{MOMSEQExampleDesc} can be extended in the following way to generate a random tree in the family $\mathcal{T}_{\alpha,\beta}$.
The process, evolving in discrete time, starts with the root labelled by zero.
At step $n+1$, with $n\ge 0$, the node with label $n+1$ is attached to any previous
node $v$ with outdegree $d(v)$ of the already grown tree with probabilities $p(v)$, 
which are given for non-root nodes $v$ by
\[
\P\{n+1<_c v\}=\frac{d(v)+\alpha}{\beta+(\alpha+1)n},
\]
and for the root by
\[
\P\{n+1<_c \text{root}\}=\frac{d(\text{root})+\beta}{\beta+(\alpha+1)n}.
\]
This growth process is similar to the Chinese restaurant process considered before.
Indeed, if we remove the root labelled zero, the remaining branches contain the nodes with labels given by $[n]=\{1,\dots,n\}$. 

\begin{figure}[!htb]
\centering
  \includegraphics[scale=0.6]{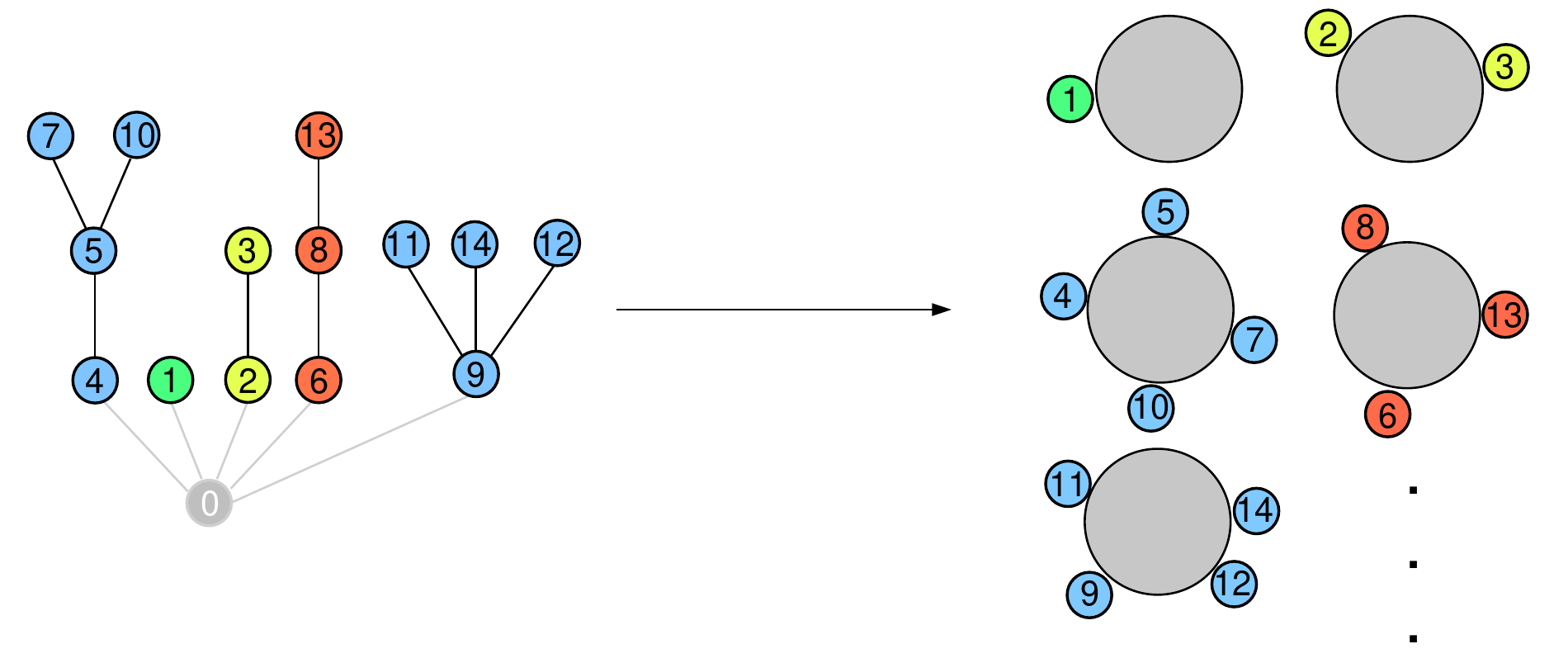} 
\caption{A size $15$ plane recursive tree, where the root node labelled zero has one size-one, one size-two, one size three, two size-four branches and the corresponding table structure in the Chinese restaurant model.\label{fig:PortSubtreeSizesb}}
\end{figure}

\begin{prop}[Chinese restaurant process and generalized plane recursive trees]
\label{ChineseProp}
A random partition of $\{1,\dots,n\}$ generated by the Chinese restaurant process with parameters $a$ and $\theta$ can be generated equivalently by the growth process of the family of generalized plane recursive trees $\mathcal{T}_{\alpha,\beta}$ when generating such a tree of size $n+1$. The parameters $a,\theta$ and $\alpha,\beta>0$, respectively, are related via
\[
a=\frac{1}{1+\alpha},\qquad \theta=\frac{\beta}{1+\alpha}.
\]
\end{prop}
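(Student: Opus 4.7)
The plan is to couple the two random processes step-by-step and then verify that after projecting the tree-growth process onto the partition it induces, the one-step transition probabilities coincide with those of the Chinese restaurant process under the stated parameter identification.

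First, I would set up the natural map from trees in $\mathcal{T}_{\alpha,\beta}$ of size $n+1$ to partitions of $[n]$: remove the root (labelled $0$); the remaining non-root nodes labelled $\{1,\dots,n\}$ split into the branches attached to the root, and the block structure of these branches yields a set partition of $[n]$. This map is well-defined and surjective, and it is easy to see inductively that the time-$(n+1)$ distribution of partitions coming from the tree-growth process is supported on all partitions of $[n]$.

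The core of the proof is a one-step matching. Suppose after $n$ steps the tree has root outdegree $k$ and branches $t_1,\dots,t_k$ of sizes $|t_1|,\dots,|t_k|$. Observe that the outdegrees of non-root nodes inside a branch $t_i$ sum to $|t_i|-1$ (the number of internal edges of the branch). Hence the total attachment weight inside $t_i$ is
\[
\sum_{v\in t_i}\bigl(d(v)+\alpha\bigr)=(|t_i|-1)+\alpha|t_i|=(1+\alpha)|t_i|-1,
\]
and the denominator in the growth rule is $\beta+(1+\alpha)n$. Therefore the probability that node $n+1$ is attached somewhere inside branch $t_i$ equals
\[
\frac{(1+\alpha)|t_i|-1}{\beta+(1+\alpha)n},
\]
while the probability that it is attached to the root (creating a new singleton branch) equals
\[
\frac{k+\beta}{\beta+(1+\alpha)n}.
\]
Substituting $a=\tfrac{1}{1+\alpha}$ and $\theta=\tfrac{\beta}{1+\alpha}$ and multiplying numerator and denominator by $\tfrac{1}{1+\alpha}$ turns these expressions into $\tfrac{|t_i|-a}{n+\theta}$ and $\tfrac{\theta+ka}{n+\theta}$, which are precisely the CRP transition probabilities for joining table $t_i$ and for opening a new table.

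Finally I would close the argument by induction on $n$: the base case $n=0$ is trivial (the tree has only the root, the partition is empty), and given that the partitions at time $n$ have the CRP distribution, the matching of one-step transition probabilities above, combined with the fact that the tree-growth step depends on the current tree only through the branch structure at the root (so that summing over internal positions inside a branch is legitimate), propagates the identity to time $n+1$. The only subtle point — and really the main thing to check carefully — is precisely this last reduction: although the tree-growth dynamics sees the full tree, the induced partition dynamics is Markov because the attachment probability into a branch depends only on its size $|t_i|$, and likewise the new-branch probability depends only on the current number of branches $k$. Given this Markov reduction, the parameter identification $a=\tfrac{1}{1+\alpha}$, $\theta=\tfrac{\beta}{1+\alpha}$ completes the proof.
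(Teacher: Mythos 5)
Your proposal is correct and follows essentially the same route as the paper's proof: summing the attachment probabilities $\frac{d(v)+\alpha}{\beta+(\alpha+1)n}$ over the nodes of a branch $t_i$, using $\sum_{v\in t_i}d(v)=|t_i|-1$ to get $\frac{(1+\alpha)|t_i|-1}{\beta+(1+\alpha)n}=\frac{|t_i|-a}{n+\theta}$, and matching the root-attachment probability with the new-table probability. Your explicit remarks on the induction and the Markov reduction of the induced partition dynamics are left implicit in the paper but are a sound way to finish the argument.
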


\begin{remark}
\label{ChineseRem}
In above relation, $\theta$ cannot be negative, since $\beta$ is assumed to be positive. 
The above correspondence can be extended to the full range $\beta>-1$ using a different degree-weight generating function $\vartheta(t)$ 
for the root node. Assume that $-1<\beta\le 0$. Then, we cannot directly use $\vartheta(t)=(1-t)^{-\beta}=1+\beta t + \dots$ due to the negative or zero weight.
Since the root connectivity is similar to the choice $\beta\mapsto 1+\beta$ for an outdegree of the root larger than one, 
we use a shifted connectivity of the root node: 
$$\vartheta(t)=1+\int_0^{t}\frac{1}{(1-x)^{1+\beta}}dx
= 1 + \frac{1}{\beta}\Big(\frac1{(1-t)^{\beta}}-1\Big)=1+\sum_{k\ge 1}\binom{\beta+k}{k-1}\frac{t^k}k,
$$
for $-1<\beta<0$, and 
$$\vartheta(t)=1-\log(1-t)=1+\sum_{k\ge 1}\frac{t^k}k,$$
for $\beta=0$.
\end{remark}

\begin{proof}[Proof of Proposition~\ref{ChineseProp}]
Assume that a size-$n$ tree $T$ of the family $\mathcal{T}_{\alpha,\beta}$ with labels $\{0,1,\dots ,n\}$ 
has $k$ branches $t_i$, $1\le i\le k$, of sizes $|t_i|$. 
By the considerations of~\cite{PanPro2005+}, at time $n + 1$ the element $n + 1$ is either attached to one of the 
existing non-root nodes $v$ with probability
\begin{equation*}
\begin{split}
\P\{n+1<_c v\}&=\frac{d(v)+\alpha}{\beta+(\alpha+1)n},
\end{split}
\end{equation*}
or to the root of the tree with probability 
\[
\P\{n+1<_c \text{root}\}=\frac{d(\text{root})+\beta}{\beta+(\alpha+1)n}
=\frac{k+\beta}{\beta+(\alpha+1)n}=\frac{\frac{\beta}{\alpha+1} + k\cdot \frac1{\alpha+1}}{n+\frac{\beta}{\alpha+1}}.
\]
Consequently, element $n+1$ is attached to one of the branches $t_i\in T$ with probability
\begin{equation*}
\begin{split}
\P\{n+1<_c t_i\}&=\sum_{v\in t_i}\P\{n+1<_c v\}=\sum_{v\in t_i}\frac{d(v)+\alpha}{\beta+(\alpha+1)n}\\
&=\frac{|t_i|-1+|t_i|\alpha}{\beta+(\alpha+1)n}=\frac{|t_i|-\frac{1}{\alpha+1}}{n+\frac{\beta}{\alpha+1}},
\end{split}
\end{equation*}
Thus, setting $a=\frac{1}{1+\alpha}$ and $\theta=\frac{\beta}{1+\alpha}$ proves the stated result.
\end{proof}

\begin{theorem}
\label{ChineseThe}
The random variable $C_{n,j}$, counting the number of parts of size $j$ in a partition of $\{1,\dots,n\}$ generated by the Chinese restaurant process with parameters $a$ and $\theta$, is distributed as $X_{n+1,j}$, which counts the number of branches of size $j$ attached to the root of a random size-$(n+1)$ generalized plane recursive tree of the family $\mathcal{T}_{\alpha,\beta}$, with $a=\frac{1}{1+\alpha}$ and $\theta=\frac{\beta}{1+\alpha}$:
\[
C_{n,j}\law X_{n+1,j}.
\]
Assume that $\beta> 0$. Then, $X_{n+1,j}$ has falling factorial moments of mixed Poisson type with mixing distribution $Z$ supported on $[0,\infty)$, and scale parameter $\lambda_{n,j}=\frac{n^{\frac1{\alpha+1}}  \binom{j-1-\frac{1}{\alpha+1}}{j-1}}{(\alpha+1) j}$:
\[
\E(\fallfak{X_{n+1,j}}s)= \lambda_{n,j}^s\cdot \frac{\Gamma(s+\beta)\Gamma(\frac{\beta}{\alpha+1})}{\Gamma(\beta)\Gamma(\frac{\beta+s}{\alpha+1})}(1+o(1)).
\]
\begin{itemize}
\item[(i)] for $\lambda_{n,j}\to\infty$, the random variable $\frac{X_{n+1,j}}{\lambda_{n,j}}$ converges in distribution, with convergence of all moments, to $Z$. 

\item[(ii)] for $\lambda_{n,j}\to\rho\in(0,\infty)$, the random variable $X_{n+1,j}$ converges in distribution, with convergence of all moments, to a mixed Poisson distributed random variable $Y\law \MPo(\rho Z)$.
\end{itemize}
\end{theorem}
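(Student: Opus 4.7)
The plan has two stages mirroring the two parts of the theorem. The identity $C_{n,j}\law X_{n+1,j}$ is essentially immediate from Proposition~\ref{ChineseProp}: under the coupling given there, a size-$(n+1)$ tree of $\mathcal{T}_{\alpha,\beta}$ is built by successively attaching the nodes $1,\dots,n$ to the tree already grown, starting from the root labelled $0$; two non-root nodes lie in the same root-branch if and only if the corresponding customers sit at the same table of the Chinese restaurant process. Hence the number of root branches of size $j$ equals the number of tables containing exactly $j$ customers.

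For the asymptotic statement I would adapt the generating-function analysis of~\cite{KubPan2006} that underlies Corollary~\ref{MOMSEQCorollRefinedNodedeg} to the family $\mathcal{T}_{\alpha,\beta}$. Marking each root branch of size $j$ by a formal variable $v$ and using $T'_{\alpha,\beta}(z)=\vartheta(T(z))$ together with the explicit form~\eqref{t} of $T(z)$, one obtains the bivariate generating function of $X_{n+1,j}$; repeated differentiation at $v=1$ then gives an exact closed form for $\E(\fallfak{X_{n+1,j}}s)$ as a product of a $j$-dependent branching factor and a ratio of binomial coefficients. The key structural difference compared with Corollary~\ref{MOMSEQCorollRefinedNodedeg} is that the degree weight $\vartheta(t)=(1-t)^{-\beta}$ at the root replaces $\varphi(t)=(1-t)^{-\alpha}$ used in $\mathcal{T}$ both in the branching factor at the root and in the normalising total weight $T_{\alpha,\beta;n}$, producing the substitution $\alpha\mapsto\beta$ in precisely the root-related Gamma factors. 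Applying Stirling's formula~\eqref{MOMSEQStirling} to the resulting ratio of Gamma functions for $n\to\infty$ with $1\le j$ and $js\le n$ then yields the claimed expansion
\[
\E(\fallfak{X_{n+1,j}}s)=\lambda_{n,j}^{s}\cdot\frac{\Gamma(s+\beta)\,\Gamma\bigl(\tfrac{\beta}{\alpha+1}\bigr)}{\Gamma(\beta)\,\Gamma\bigl(\tfrac{\beta+s}{\alpha+1}\bigr)}\bigl(1+o(1)\bigr).
\]

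To conclude via Lemma~\ref{MOMSEQMainLemma}, it remains to verify that the limit sequence $\mu_s=\Gamma(s+\beta)\Gamma(\tfrac{\beta}{\alpha+1})/\bigl(\Gamma(\beta)\Gamma(\tfrac{\beta+s}{\alpha+1})\bigr)$ determines a unique distribution $Z$ on $[0,\infty)$. A direct Stirling estimate gives $\mu_s^{1/s}=\Theta(s^{\alpha/(\alpha+1)})$, so $\mu_{2s}^{-1/(2s)}=\Theta(s^{-\alpha/(\alpha+1)})$ and Carleman's condition~\eqref{MOMSEQmmCarleman} is satisfied. Parts (i) and (ii) of Lemma~\ref{MOMSEQMainLemma}, applied with scale parameter $\lambda_{n,j}$, then deliver the two convergence regimes. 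The main obstacle is the explicit factorial-moment computation for $\mathcal{T}_{\alpha,\beta}$: the overall structure parallels that for standard generalized plane recursive trees, but the asymmetric degree weighting at the root requires careful bookkeeping so that only the root-related Gamma factor undergoes $\alpha\mapsto\beta$ while the branching factor $\binom{j-1-\frac{1}{\alpha+1}}{j-1}/((\alpha+1)j)$ inherited from the non-root part of the tree remains unchanged; once the exact formula is in hand, the asymptotic expansion and Carleman check are routine.
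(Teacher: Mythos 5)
Your proposal is correct and follows essentially the same route as the paper: the distributional identity is read off from Proposition~\ref{ChineseProp}, and the asymptotics are obtained by introducing the marker variable $v$ into $T'_{\alpha,\beta}(z,v)=\vartheta\bigl(T(z)-\frac{T_j}{j!}z^j(1-v)\bigr)$, extracting an exact closed form for $\E(\fallfak{X_{n+1,j}}{s})$ as a power of the branching factor $\binom{j-1-\frac{1}{\alpha+1}}{j-1}/((\alpha+1)j)$ times a ratio of binomial coefficients (with the root weight $\beta$ appearing exactly where you predict), and then applying Stirling's formula together with Lemma~\ref{MOMSEQMainLemma}. Your explicit Carleman check for $\mu_s$ is a detail the paper leaves implicit, and it is correct since $\alpha/(\alpha+1)<1$.
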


\begin{remark}
A similar result holds true for $-1<\beta \le 0$. The analysis is identical, but one has to use the adapted degree-weight generating functions stated in Remark~\ref{ChineseRem}.
\end{remark}

\begin{proof}[Proof of Theorem~\ref{ChineseThe}]
We can study $X_{n+1,j}$, which counts the number of branches of size $j$ attached to the root of a size-$(n+1)$ tree using the variable $v$ as a marker and the generating function 
$$
T_{\alpha,\beta}(z,v)=\sum_{n\ge 1}T_{\alpha,\beta;n}\E(v^{X_{n,j}})\frac{z^n}{n!}.
$$
We have
\[
T'_{\alpha,\beta}(z,v)=\vartheta\big(T(z)-\frac{T_j}{j!}z^j(1-v)\big),\qquad T'(z)=\varphi(T(z)).
\]
Solving the differential equation for $T(z)$ leads to
\[
T'_{\alpha,\beta}(z,v)=\frac1{\big(1-T(z)+\frac{T_j}{j!}z^j(1-v)\big)^\beta},\qquad T(z)=1-(1-(\alpha+1)z)^{\frac{1}{\alpha+1}}.
\]
We can access the \ith{s} moment of $X_{n+1,j}$ as follows:
\[
\E(\fallfak{X_{n+1,j}}{s})=s!\frac{[z^n w^s]T'_{\alpha,\beta}(z,v)}{\frac{T_{\alpha,\beta;n}}{n!}},
\]
where we set $w=v-1$. Consequently, an expansion of $T'_{\alpha,\beta}(z,v)$ at $w=0$ gives 
\[
\E(\fallfak{X_{n+1,j}}{s})=\frac{n!s!}{T_{\alpha,\beta;n}}\cdot\Big(\frac{T_j}{j!}\Big)^s\binom{\beta-1+s}{s} [z^{n-j s}]\frac{1}{(1-T(z))^{\beta+s}}.
\]
Since $\frac{T_{\alpha,\beta;n}}{n!}=(\alpha+1)^n\binom{\frac{\beta}{\alpha+1}+n-1}{n}$, we obtain the explicit result
\[
\E(\fallfak{X_{n+1,j}}{s})=\bigg(\frac{\binom{j-1-\frac{1}{\alpha+1}}{j-1}}{(\alpha+1)j}\bigg)^s \cdot \frac{s!}{\binom{\frac{\beta}{\alpha+1}+n-1}{n}}\cdot\binom{\beta-1+s}{s}
\binom{n -j s -1 + \frac{\beta+s}{\alpha+1}}{n-j s}.
\]
An application of Stirling's formula for the Gamma function yields then the stated result.
\end{proof}

\section{Triangular urn models\label{MOMSEQtriangular}}
During the study of node-degree in generalized plane recursive trees in Section~\ref{MOMSEQExampleNodedegs} we encountered a triangular urn model leading to factorial moments of mixed Poisson type. Here we study a more general triangular urn.
\begin{urn}
Consider a balanced triangular urn model with ball replacement matrix 
$$M = \left(\begin{matrix} \alpha & \beta \\ 0 & \gamma \end{matrix}\right),
\quad \alpha,\beta,\gamma \in \N, \qquad \gamma=\alpha+\beta\in\N.$$ 
The initial configuration of the urn consists of $w_0$ white balls and $b_0$ black balls, 
and the random variable $W_{n}$ counts the number of white balls in the urn after $n$ draws.
\end{urn}
This urn model has been studied by Puyhaubert~\cite{FlaDumPuy2006,Puy2005} who derived the probability mass function of $W_n$ and a limit law for $n\to\infty$. The results of~\cite{FlaDumPuy2006,Puy2005} were extended by Janson~\cite{Janson2006} to unbalanced triangular urn models. 
Here, using a simple closed formula for the rising factorial moments of $W_n$, we point out several phase transitions, involving amongst others moments of mixed Poisson type, for non-standard initial values $b_0=b_0(n)$, which may depend on the discrete time $n$. Due to the balanced nature of the urn 
the total number $T_n$ of balls after $n$ draws is a deterministic quantity:
\[
T_n=T_0+n\cdot \gamma,\quad n\ge 0,\qquad T_0=w_0+b_0.
\]

\smallskip

Our starting point is the analysis of the normalized number of white balls $X_n=W_n/\alpha$, such that $X_0=w_0/\alpha$.
Let $\mathcal{F}_{n}$ denote the $\sigma$-field generated by the first
$n$ steps. Moreover, denote by $\Delta_n=X_n-X_{n-1}\in\{0,1\}$ the
increment at step $n$.  
We have 
\begin{equation*}
\E(X_n \mid \mathcal{F}_{n-1})= \E(X_{n-1}+\Delta_n \mid \mathcal{F}_{n-1})
=X_{n-1} +  \E(\Delta_n \mid \mathcal{F}_{n-1}).
\end{equation*}
Since the probability that a new white ball is generated at step $n$
is proportional to the number $W_{n-1}=X_{n-1}\cdot \alpha$ of existing white balls (at step $n-1$),
we obtain further 
\begin{equation*}
\E(X_n \mid \mathcal{F}_{n-1})
=X_{n-1} +\frac{X_{n-1}\cdot \alpha}{T_{n-1}}
=\frac{T_{n-1}+\alpha}{T_{n-1}}W_{n-1},\quad n\ge 1.
\end{equation*}
Hence, let 
\[
\mathcal{X}_n= X_n\cdot\prod_{k=0}^{n-1}\frac{T_k}{T_k+\alpha}=X_n\cdot \frac{\binom{n-1+\frac{T_0}{\gamma}}n}{\binom{n-1+\frac{T_0+\alpha}{\gamma}}n}.
\]
Then
\begin{equation*}
\E( \mathcal{X}_n \mid \mathcal{F}_{n-1})
= \frac{\binom{n-1+\frac{T_0}{\gamma}}n}{\binom{n-1+\frac{T_0+\alpha}{\gamma}}n} \cdot \frac{T_{n-1}+\alpha}{T_{n-1}}W_{n-1}
=\mathcal{X}_{n-1},\quad n\ge 1.
\end{equation*}
Consequently, $\mathcal{X}_n$ is a positive martingale. By taking the unconditional expectation, this implies that the expected value 
of $X_n$ is given 
by
\[
\E(X_n)= \frac{\binom{n-1+\frac{T_0+\alpha}{\gamma}}n}{\binom{n-1+\frac{T_0}{\gamma}}n} \cdot \E(X_0)=\frac{\binom{n-1+\frac{T_0+\alpha}{\gamma}}n}{\binom{n-1+\frac{T_0}{\gamma}}n}\cdot X_0.
\]

More generally, we similarly have for any positive integer $s$
\begin{equation*}
\begin{split}
\E\bigg(\binom{X_n+s-1}{s}\biggm| \mathcal{F}_{n-1}\bigg)&= \binom{X_{n-1}+s-1}{s} + \binom{X_{n-1}+s-1}{s-1}\frac{\alpha X_{n-1}}{T_{n-1}}\\
&=\binom{X_{n-1}+s-1}{s}\frac{T_{n-1}+s\alpha}{T_{n-1}}.
\end{split}
\end{equation*}
Hence, this implies that the \ith{s} binomial moment is given by
\begin{equation*}
\begin{split}
\E\bigg(\binom{X_n+s-1}{s}\bigg)&=\frac{\binom{n-1+\frac{T_0+s\alpha}{\gamma}}n}{\binom{n-1+\frac{T_0}{\gamma}}n}\cdot \binom{X_0+s-1}{s}\\
&= \frac{\Gamma(n+\frac{w_0+b_0+s\alpha}{\gamma})\Gamma(\frac{w_0+b_0}{\gamma})\Gamma(\frac{w_0}\alpha +s )}{\Gamma(\frac{w_0+b_0+s\alpha}{\gamma})\Gamma(n+\frac{w_0+b_0}{\gamma})\Gamma(\frac{w_0}\alpha)\Gamma(s+1)}.
\end{split}
\end{equation*}

\begin{theorem}\label{thm:triangular_urn}
The \ith{s} rising factorial moment of the random variable $X_n=W_{n}/\alpha$, where $W_n$ counts the number of white ball in a balanced triangular urn 
with ball replacement matrix given by $\left(\begin{smallmatrix}\alpha & \beta \\ 0 & \gamma \end{smallmatrix}\right)$, $\alpha,\beta,\gamma\in\N$, $\gamma=\alpha+\beta$, 
is given by the exact formula 
\begin{equation*}
\begin{split}
\E\big(\auffak{X_n}s\big)&= \frac{\Gamma(n+\frac{w_0+b_0+s\alpha}{\gamma})\Gamma(\frac{w_0+b_0}{\gamma})\Gamma(\frac{w_0}\alpha +s )}{\Gamma(\frac{w_0+b_0+s\alpha}{\gamma})\Gamma(n+\frac{w_0+b_0}{\gamma})\Gamma(\frac{w_0}\alpha)},
\end{split}
\end{equation*}
where $w_0$, $b_0$ denote the initial number of white and black balls, respectively.
The factorial moments of $\hat{X}_n=X_n-\frac{w_0}\alpha$ are for $\min\{n,b_0\}\to\infty$ asymptotically of mixed Poisson type 
with a gamma mixing distribution $X\law\gamma(\frac{w_0}\alpha ,1)$, and scale parameter $\lambda_{n,b_0}=\Big(\frac{n+\frac{b_0}\gamma}{\frac{b_0}\gamma}\Big)^{\frac{\alpha}\gamma}-1$, 
\[
\E(\fallfak{\hat{X}_n}{s})=(\lambda_{n,b_0})^s\cdot \frac{\Gamma(\frac{w_0}\alpha +s )}{\Gamma(\frac{w_0}\alpha)}(1+o(1)).
\]

\begin{itemize}
\item[(i)] for $\lambda_{n,b_0}\to\infty$, the random variable $\frac{\hat{X}_{n}}{\lambda_{n,b_0}}$ converges in distribution, with convergence of all moments, to $X$. 

\item[(ii)] for $\lambda_{n,b_0}\to\rho\in(0,\infty)$, the random variable $\hat{X}_{n}$ converges in distribution, with convergence of all moments, to $Y\law\MPo(\rho X)$. 
\end{itemize}
\end{theorem}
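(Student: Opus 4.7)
The exact rising-factorial-moment formula is essentially already in place: the martingale calculation immediately preceding the statement produced $\E\bigl(\binom{X_n+s-1}{s}\bigr)$ as an explicit ratio of Gamma functions, and multiplying by $s! = \Gamma(s+1)$ cancels the $\Gamma(s+1)$ in the denominator to yield the first displayed equation of the theorem.

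For the asymptotic analysis of $\E(\fallfak{\hat X_n}{s})$ with $\hat X_n = X_n - w_0/\alpha$, my plan is to derive a closed-form representation via a generating function identity. Using $(1+u)^{X_n} = (1 - u/(1+u))^{-X_n}$ together with the rising-factorial series for $(1-v)^{-X_n}$ gives
\begin{equation*}
\sum_{s\ge 0}\E(\fallfak{\hat X_n}{s})\frac{u^s}{s!} = \E\bigl((1+u)^{\hat X_n}\bigr) = (1+u)^{-w_0/\alpha}\sum_{k\ge 0}\E(\auffak{X_n}{k})\frac{u^k}{k!(1+u)^k}.
\end{equation*}
Substituting the exact rising-factorial formula and extracting $[u^s]$ simplifies (after standard Gamma-function manipulations, in particular $\frac{s!}{k!}\binom{s+w_0/\alpha-1}{s-k}\frac{\Gamma(w_0/\alpha+k)}{\Gamma(w_0/\alpha)} = \binom{s}{k}\frac{\Gamma(w_0/\alpha+s)}{\Gamma(w_0/\alpha)}$) to the closed form
\begin{equation*}
\E(\fallfak{\hat X_n}{s}) = \frac{\Gamma(w_0/\alpha+s)}{\Gamma(w_0/\alpha)}\sum_{k=0}^{s}(-1)^{s-k}\binom{s}{k}R_n^{(k)},
\end{equation*}
where $R_n^{(k)} := \Gamma(n+(T_0+k\alpha)/\gamma)\Gamma(T_0/\gamma)/[\Gamma((T_0+k\alpha)/\gamma)\Gamma(n+T_0/\gamma)]$ with $T_0 = w_0+b_0$. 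Stirling's formula~\eqref{MOMSEQStirling} (together with $T_0 \sim b_0$ as $b_0\to\infty$) then gives $R_n^{(k)} = (1+n\gamma/b_0)^{k\alpha/\gamma}(1+o(1)) = (\lambda_{n,b_0}+1)^k(1+o(1))$, so the binomial theorem collapses the alternating sum:
\begin{equation*}
\sum_{k=0}^{s}(-1)^{s-k}\binom{s}{k}R_n^{(k)} = \sum_{k=0}^{s}(-1)^{s-k}\binom{s}{k}(\lambda_{n,b_0}+1)^k (1+o(1)) = \lambda_{n,b_0}^s(1+o(1)).
\end{equation*}
The main subtlety is that cancellation of the ``$+1$'' inside $(\lambda_{n,b_0}+1)^k$ via the alternating binomial sum is precisely what distinguishes the mixed-Poisson scale $\lambda_{n,b_0}$ for $\hat X_n$ from the Poisson-like scale $\lambda_{n,b_0}+1$ that $R_n^{(s)}$ alone would produce; it encodes the shift from $X_n$ to $\hat X_n$ and must be handled exactly (not merely to leading order), since in regime (ii) all terms of the sum are of the same order.

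With the asymptotic in hand, the two stated conclusions follow immediately from Lemma~\ref{MOMSEQMainLemma}. The moment sequence $(\Gamma(w_0/\alpha+s)/\Gamma(w_0/\alpha))_{s\in\N}$ is the power-moment sequence of the Gamma distribution $X\law\gamma(w_0/\alpha,1)$, whose moment generating function exists in a neighbourhood of the origin; consequently Carleman's criterion holds and $X$ is uniquely determined by its moments. Part (i) of Lemma~\ref{MOMSEQMainLemma} applied when $\lambda_{n,b_0}\to\infty$ and part (ii) when $\lambda_{n,b_0}\to\rho\in(0,\infty)$ then give the claimed limit laws with convergence of all moments.
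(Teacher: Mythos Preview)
Your proof is correct, and the intermediate closed form
\[
\E(\fallfak{\hat X_n}{s})=\frac{\Gamma(w_0/\alpha+s)}{\Gamma(w_0/\alpha)}\sum_{k=0}^{s}(-1)^{s-k}\binom{s}{k}R_n^{(k)}
\]
checks out exactly as you derived it. The route, however, is genuinely different from the paper's.

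The paper converts $\fallfak{\hat X_n}{s}$ to rising factorials of $X_n$ by composing two binomial-type identities, $\fallfak{x}{s}=\sum_{\ell}\binom{s}{\ell}\auffak{x}{\ell}(-1)^{s-\ell}\fallfak{(s-1)}{s-\ell}$ followed by $\auffak{(x+c)}{\ell}=\sum_{j}\binom{\ell}{j}\auffak{x}{j}\auffak{c}{\ell-j}$, which produces a double sum over $\ell$ and $j$. After inserting the asymptotic $\E(\auffak{X_n}{j})\sim(\lambda_{n,b_0}+1)^{j}\Gamma(w_0/\alpha+j)/\Gamma(w_0/\alpha)$ and collecting powers of $\lambda_{n,b_0}$, the coefficients still require the hypergeometric form of the Vandermonde convolution to collapse to a Kronecker delta $\delta_{s,i}$.

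Your generating-function substitution $u\mapsto u/(1+u)$ bypasses all of this: it yields a single sum directly, and the ordinary binomial theorem finishes the job. This is cleaner and makes the role of the shift by $w_0/\alpha$ (the prefactor $(1+u)^{-w_0/\alpha}$) completely transparent. Both proofs must confront the same delicate point in regime~(ii), where no single term of the alternating sum dominates; your remark about handling the ``$+1$'' exactly is precisely what is needed, and the step is sound because the sum has only $s+1$ terms and each error $R_n^{(k)}-(\lambda_{n,b_0}+1)^k=o((\lambda_{n,b_0}+1)^k)$ contributes $o((\lambda_{n,b_0}+1)^s)=o(\lambda_{n,b_0}^s)$ in both regimes.
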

\begin{remark}
\label{MOMSEQExampleTriangularRemark}
It is well known from the works of Puyhaubert~\cite{FlaDumPuy2006,Puy2005} and Janson~\cite{Janson2006}, 
that for fixed $b_0$ the random variable $X_n/n^{\frac{\alpha}\gamma}$ tends to a random variable $Z$, depending on the initial condition $w_0,b_0$ and also $\alpha$ and $\gamma$.
Its power moments are given by
\[
\E(Z^s)=\frac{\Gamma(\frac{w_0+b_0}{\gamma})\Gamma(\frac{w_0}\alpha +s )}{\Gamma(\frac{w_0+b_0+s\alpha}{\gamma})\Gamma(\frac{w_0}\alpha)},\quad s\ge 1;
\] 
for more details about the nature of this random variable we refer the reader to~\cite{FlaDumPuy2006,Janson2006}.
This result can easily be re-obtained using the explicit expression for the rising factorial moments of $X_n$ and the method of moments.
We obtain the gamma mixing distribution $X\law\gamma(\frac{w_0}\alpha ,1)$ from $Z=Z_{b_0}$ 
as follows:
\[
b_0^{\frac{\alpha}\gamma}Z_{b_0}\claw X,\quad \text{for }b_0\to\infty,
\]
with convergence of all moments.
\end{remark}

\begin{proof}[Proof of Theorem~\ref{thm:triangular_urn}]
Let $Y$ denote a random variable with \emph{rising factorial moments} $\E(\auffak{Y}s)=\E(Y(Y+1)\dots(Y+s-1))$ satisfying an expansion of mixed Poisson type, $\E(\auffak{Y}s)=\rho^s \cdot \mu_s$, for $s\ge 1$, with $\mu_s\ge 0$. We obtain the (falling) factorial moments using the binomial theorem for rising factorials (see~\cite{Concrete}):
\begin{equation*}
\begin{split}
\fallfak{x}{s}&=\auffak{(x-s+1)}{s}=\sum_{\ell=1}^{s}\binom{s}\ell \cdot\auffak{x}{\ell}\cdot\auffak{(-s+1)}{s-\ell}\\
&=\sum_{\ell=1}^{s}\binom{s}\ell \auffak{x}{\ell}(-1)^{s-\ell}\fallfak{(s-1)}{s-\ell},\quad s\ge 1.
\end{split}
\end{equation*}
Moreover, we can obtain the rising factorial moments of the shifted random variable $\hat{X}_n=X_n-\frac{w_0}a$
by using again the binomial theorem
\[
\auffak{(x+c)}{\ell}=\sum_{j=0}^\ell\binom\ell{j} \cdot\auffak{x}{j}\cdot\auffak{c}{\ell-j},\quad \ell\ge 0.
\]
This implies that we can express the factorial moments of $\hat{X}_n$ in terms of the rising factorial moments of $X_n$ by combining the two identities above in the following way.
\begin{equation*}
\begin{split}
\E(\fallfak{\hat{X}_n}{s})&=\sum_{\ell=1}^{s}\binom{s}\ell \E(\auffak{\hat{X}_n}{\ell})(-1)^{s-\ell}\fallfak{(s-1)}{s-\ell}\\
&=\sum_{\ell=1}^{s}\binom{s}\ell(-1)^{s-\ell}\fallfak{(s-1)}{s-\ell}\sum_{j=0}^\ell\binom\ell{j}\E\big(\auffak{X_n}\ell\big)\auffak{(-w_0/\alpha)}{\ell-j}.
\end{split}
\end{equation*}
Next we use the asymptotic expansion of the rising factorial moments of $X_n$, 
\[
\E\big(\auffak{X_n}s\big)= (\lambda_{n,b_0}+1)^{s}\frac{\Gamma(\frac{w_0}\alpha +s )}{\Gamma(\frac{w_0}\alpha)}(1+o(1)),\quad s\ge 1,
\]
where $\lambda_{n,b_0}=\left(\frac{n+\frac{b_0}\gamma}{\frac{b_0}\gamma}\right)^{\frac{\alpha}c}-1$. 
Interchanging summations, and collecting powers of $\lambda_{n,b_0}$ leads to the expansion
\begin{equation*}
\begin{split}
\E(\fallfak{\hat{X}_n}{s})&=s!\sum_{i=0}^{s}\lambda_{n,b_0}^i\bigg[\sum_{j=i}^s   \binom{j}i(-1)^j\binom{\frac{w_0}\alpha +j-1}{j}\times\\
&\qquad\times\sum_{\ell=j}^{s}\binom{\ell-1}{\ell-j} \binom{-\frac{w_0}\alpha }{s-\ell}(-1)^{\ell}\bigg](1+o(1)).
\end{split}
\end{equation*}
Next, using the hypergeometric form of the Vandermonde convolution, see~\cite[p.~212]{Concrete}, we obtain for the inner sum
\[
\sum_{\ell=j}^{s}\binom{\ell-1}{\ell-j} \binom{-\frac{w_0}\alpha }{s-\ell}(-1)^{\ell}= 
\frac{\binom{-\frac{w_0}\alpha }{s-j} \binom{j-s-\frac{w_0}\alpha }{j}}{\binom{-\frac{w_0}\alpha+j-1 }{j}}.
\]
We get further 
\begin{align*}
& s!\sum_{j=i}^s   \binom{j}i\binom{-\frac{w_0}\alpha }{s-j} \binom{j-s-\frac{w_0}\alpha }{j}(-1)^j
=\delta_{s,i}\,\cdot s!(-1)^s \binom{-\frac{w_0}\alpha}{s}\\
& \quad = \delta_{s,i}\,\cdot  \frac{\Gamma(\frac{w_0}\alpha +s )}{\Gamma(\frac{w_0}\alpha)},
\end{align*}
where $\delta_{s,i}$ denotes the Kronecker-delta function. This proves the stated result.
\end{proof}

\section{Mixed Poisson-Rayleigh laws\label{RayPoi}}

In the analysis of various combinatorial objects as, e.g., lattice paths, trees and mappings, the Rayleigh distribution occurs frequently. In this section we give several examples, where during the study of such objects, a mixed Poisson distribution with Rayleigh mixing distribution occurs in a natural way. Apart from the first example, the occurrence and proof of the mixed Poisson distribution is novel, best to our knowledge.

\subsection{The number of inversions in labelled tree families}\label{Ssec:InvTrees}
Consider a rooted labelled tree $T$, where the nodes of $T$ are labelled with distinct integers (usually of the set $\{1, 2, \dots, |T|\}$, with $|T|$ the size, i.e., the number of vertices, of $T$).
An \emph{inversion} in $T$ is a pair $(i,j)$ of vertices (we may always identify a vertex with its label), such that $i > j$ and $i$ lies on the unique
path from the root node of $T$ to $j$ (thus $i$ is an ascendant of $j$ or, equivalently, $j$ is a descendant of $i$).
Given a tree family, we introduce the r.v.\ $I_{n,j}$, which counts, for a random tree of size $n$, the number of inversions induced by the node labelled $j$, $1 \le j \le n$, i.e., it counts the number of inversions of the kind $(i,j)$, with $i > j$ an ancestor of $j$. See Figure~\ref{fig:TreeInversions} for an illustration of the quantity considered.
\begin{figure}
\begin{center}
  \includegraphics[height=2.5cm]{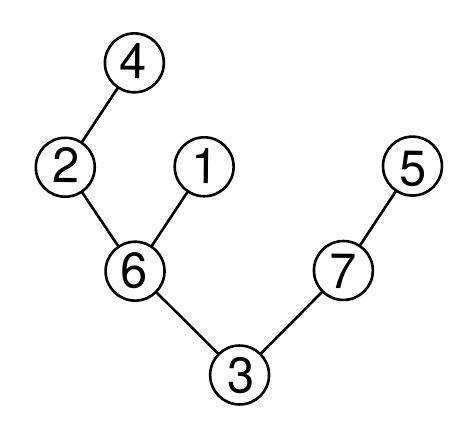}
\end{center}
\caption{A binary labelled tree of size $7$ with a total number of $6$ inversions, namely $(3,1)$, $(6,1)$, $(3,2)$, $(6,2)$, $(6,4)$, $(7,5)$. Thus two inversions each are induced by the nodes
$1$ and $2$, whereas one inversion each is induced by the nodes $4$ and $5$.\label{fig:TreeInversions}}
\end{figure}

Panholzer and Seitz~\cite{PanSeitz2011} studied the r.v.\ $I_{n,j}$ for random trees of so-called labelled simply generated tree families (see, e.g., \cite{FlaSed2009}; note that in the probabilistic literature such tree models are more commonly called Galton-Watson trees), which contain many important tree families as, e.g., ordered, unordered, binary and cyclic labelled trees as special instances.

Formally, a class $\mathcal{T}$ of labelled simply generated trees is defined in the following way:
One chooses a sequence $(\varphi_{\ell})_{\ell \geq 0}$ (the so-called \emph{degree-weight sequence}) of nonnegative real numbers with $\varphi_0 > 0$. 
Using this sequence, the \textit{weight $w(T)$} of each labelled ordered tree (i.e., each labelled rooted tree, 
in which the children of each node are ordered from left to right) is defined by 
$w(T) := \prod_{v \in T} \varphi_{d(v)}$, where by $v \in T$ we mean that $v$ is a vertex of $T$ and 
$d(v)$ denotes the number of children of $v$ (i.e., the outdegree of $v$). The family $\mathcal{T}$ associated to the degree-weight sequence $(\varphi_{\ell})_{\ell \geq 0}$ 
then consists of all trees $T$ (or all trees $T$ with $w(T) \neq 0$) together with their weights.
We let $T_n := \sum_{|T| = n} w(T)$ denote the the \emph{total weight} of all trees of size $n$ in $\mathcal{T}$;
for many important simply generated tree families, $(T_n)_{n \geq 1}$ is a sequence of natural numbers, and then the total weight $T_n$ can be interpreted simply as the 
number of trees of size $n$ in $\mathcal{T}$.

When analysing parameters in a simply generated tree family $\mathcal{T}$ it is common to assume the \emph{random tree model for weighted trees}, i.e.,
when speaking about a random tree of size $n$ one assumes that each tree $T$ in $\mathcal{T}$ of size $n$ is chosen with a probability proportional to its weight $w(T)$, i.e., is chosen with probability $\frac{w(T)}{T_{n}}$.
Under mild conditions on the degree-weight sequence $(\varphi_{\ell})_{\ell \geq 0}$ of a family $\mathcal{T}$ of labelled simply generated trees and assuming the random tree model, in \cite{PanSeitz2011} the following asymptotic formula for the factorial moments of $I_{n,j}$ has been obtained:
\[
\E(\fallfak{I_{n,j}}{s})=\Gamma\big(\frac{s}{2}+1\big)\Big(\frac{2}{\kappa}\Big)^{\frac{s}{2}}\frac{\fallfak{(n-j)}{s}}{n^{\frac{s}{2}}} \cdot \big(1+o(1)\big),
\]
where the constant $\kappa$ depends on the particular tree family, i.e., on the degree-weight sequence, and is given in \cite{PanSeitz2011}. Consequently, 
an application of Lemma~\ref{MOMSEQMainLemma} and taking into account Example~\ref{ExampleRayleigh} yields the following result, which adds to the results of \cite{PanSeitz2011} the characterization of the limiting distribution as a mixed Poisson distribution.
\begin{coroll}
The random variable $I_{n,j}$, which counts the number of inversions induced by node $j$ in a random labelled simply generated tree of size $n$ has for $n\to\infty$ and arbitrary $1 \le j=j(n) \le n$ asymptotically factorial moments of mixed Poisson type with a Rayleigh mixing distribution $X$ and scale parameter $\lambda_{n,j}=\sqrt{\frac{1}{\kappa}}\frac{n-j}{\sqrt{n}}$ (with constant $\kappa$ given in \cite{PanSeitz2011}),
\[
\E(\fallfak{I_{n,j}}{s})=\lambda_{n,j}^{s} \, 2^{\frac{s}{2}} \, \Gamma\big(\frac{s}{2}+1\big) \cdot \big(1+o(1)\big).
\]
\begin{itemize}
\item[(i)] for $\lambda_{n,j}\to\infty$, the random variable $\frac{I_{n,j}}{\lambda_{n,j}}$ converges in distribution, with convergence of all moments, to $X$. 

\item[(ii)] for $\lambda_{n,j}\to\rho\in(0,\infty)$, the random variable $I_{n,j}$ converges in distribution, with convergence of all moments, to a mixed Poisson distributed random variable $Y\law \MPo(\rho X)$.
\end{itemize}
Moreover, the random variable $Y\law \MPo(\rho X)$ converges for $\rho\to\infty$, after scaling, to its mixing distribution $X$: $\frac{Y}{\rho}\claw X$, with convergence of all moments.
\end{coroll}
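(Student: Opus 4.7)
The plan is to realize the corollary as a direct consequence of Lemma~\ref{MOMSEQMainLemma} combined with the Rayleigh moment identity of Example~\ref{ExampleRayleigh}. First I would rewrite the factorial moment asymptotics of \cite{PanSeitz2011} in the canonical mixed-Poisson form. Fixing $s \ge 1$, for $n-j \to \infty$ one has $\fallfak{(n-j)}{s} = (n-j)^s(1+o(1))$, so
\[
\E(\fallfak{I_{n,j}}{s}) = \Big(\frac{n-j}{\sqrt{\kappa n}}\Big)^{s} \cdot 2^{s/2}\,\Gamma\!\Big(\frac{s}{2}+1\Big)\cdot(1+o(1)) = \lambda_{n,j}^{s}\cdot 2^{s/2}\,\Gamma\!\Big(\frac{s}{2}+1\Big)\cdot(1+o(1)),
\]
with $\lambda_{n,j} = \sqrt{1/\kappa}\,(n-j)/\sqrt{n}$. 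When $n-j$ stays bounded we have $\lambda_{n,j}\to 0$ and also $\E(\fallfak{I_{n,j}}{s})\to 0$ for every $s$, so the degenerate branch of the remark following Lemma~\ref{MOMSEQMainLemma} applies and $I_{n,j}\claw 0$ with moment convergence; this corner case need only be mentioned.

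Next I would identify the candidate mixing distribution. By Example~\ref{ExampleRayleigh}, the moment sequence $\mu_s = 2^{s/2}\Gamma(s/2+1)$ is exactly that of a Rayleigh random variable $X \law \text{Rayleigh}(1)$. Hence the factorial moments of $I_{n,j}$ are asymptotically of the mixed Poisson form $\lambda_{n,j}^{s}\mu_s(1+o(1))$ required by Lemma~\ref{MOMSEQMainLemma}.

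I would then verify Carleman's condition for $X$: since $\mu_{2s} = 2^{s}\,s!$, Stirling gives $(\mu_{2s})^{-1/(2s)} \sim \sqrt{e/(2s)}$, whose series diverges, so the Rayleigh distribution is uniquely determined by its moments. Invoking Lemma~\ref{MOMSEQLemmaUniqueness}, any corresponding mixed Poisson law $\MPo(\rho X)$ is likewise characterized by its moments. Now Lemma~\ref{MOMSEQMainLemma}(i) yields $I_{n,j}/\lambda_{n,j} \claw X$ with moment convergence whenever $\lambda_{n,j}\to\infty$, and Lemma~\ref{MOMSEQMainLemma}(ii) yields $I_{n,j} \claw Y \law \MPo(\rho X)$ with moment convergence whenever $\lambda_{n,j}\to\rho \in (0,\infty)$. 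The final assertion $Y/\rho \claw X$ as $\rho\to\infty$ is exactly the closing statement of Lemma~\ref{MOMSEQMainLemma}.

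The proof is essentially an application of the machinery already set up in Section~\ref{secMixedPoisson}, so there is no real obstacle. The only point requiring a little care is the passage $\fallfak{(n-j)}{s}/n^{s/2}\to(\sqrt{\kappa}\lambda_{n,j})^s$: one must separate the generic case $n-j\to\infty$ (where the falling factorial is asymptotic to $(n-j)^s$) from the regime of bounded $n-j$ (handled degenerately as above), so that the expansion matches the hypothesis of Lemma~\ref{MOMSEQMainLemma} uniformly across the range $1\le j=j(n)\le n$.
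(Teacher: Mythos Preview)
Your proposal is correct and follows essentially the same approach as the paper: start from the factorial moment asymptotics of \cite{PanSeitz2011}, recognize $\mu_s=2^{s/2}\Gamma(\frac{s}{2}+1)$ as the Rayleigh$(1)$ moment sequence via Example~\ref{ExampleRayleigh}, and apply Lemma~\ref{MOMSEQMainLemma}. Your explicit verification of Carleman's condition and the separate treatment of the bounded $n-j$ regime are welcome additions, but the core argument is identical to the paper's.
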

\begin{remark}
Considering $\lambda_{n,j}$, the critical phase occurs at $j =  n - \Theta(\sqrt{n})$: $\E(I_{n,j}) \to \infty$, for $n-j \gg \sqrt{n}$, whereas $\E(I_{n,j}) \to 0$, for $n-j \ll \sqrt{n}$.
\end{remark}

\subsection{Record-subtrees in Cayley-trees\label{Ssec:Records}}
Given a rooted labelled tree $T$, a \emph{min-record} (or simply \emph{record}, for short) is a node $x \in T$, which has the smallest label amongst all nodes on the (unique) path from the root-node of $T$ to $x$. Let us assume that $\{r_{1}, \dots, r_{k}\}$ is the set of records of $T$; then this set naturally induces a decomposition of the tree $T$ into what is called here \emph{record-subtrees} $\{S_{1}, \dots, S_{k}\}$: $S_{i}$, $1 \le i \le k$, is the largest subtree rooted at the record $r_{i}$ not containing any of the remaining records $r_{1}, \dots, r_{i-1}, r_{i+1}, \dots, r_{k}$. In other words, a record-subtree $S$ is a maximal subtree (i.e., it is not properly contained in another such subtree) of $T$ with the property that the root-node of $S$ has the smallest label amongst all nodes of $S$.
See Figure~\ref{fig:RecordSubtrees} for an illustration of these quantities.
\begin{figure}
\begin{center}
  \includegraphics[height=2.5cm]{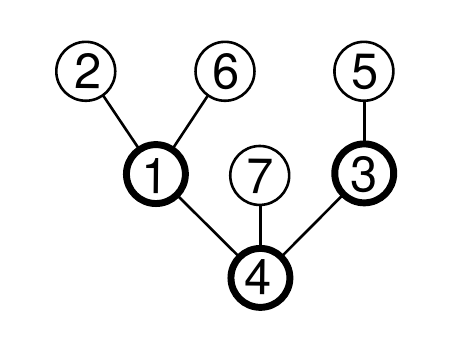} \quad \raisebox{1.1cm}{\Large{$\Rightarrow$}} \quad \includegraphics[height=2.5cm]{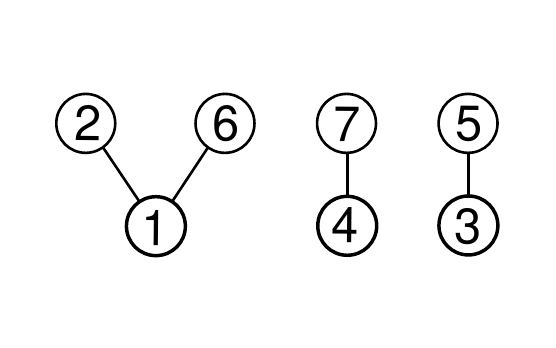}
\end{center}
\caption{A labelled tree of size $7$ with three min-records: nodes $1$, $3$, and $4$. This decomposes the original tree into three record subtrees, one of them is of size $3$ and two of them are of size $2$.\label{fig:RecordSubtrees}}
\end{figure}

In the following we will study the occurrence of record-subtrees of a given size for one of the most natural random tree models, namely random rooted labelled unordered trees, often called random Cayley-trees. A Cayley-tree is a rooted tree $T$, where the nodes of $T$ are labelled with distinct integers of $\{1, 2, \dots, |T|\}$ and where the children of any node $x \in T$ are not equipped with any left-to-right ordering (i.e., we may think that each node in $T$ has a possibly empty set of children). 
Combinatorially, the family $\mathcal{T}$ of Cayley-trees can be described formally via the \textsc{Set} construction as
\begin{equation}
\label{MOMSEQCayley}
\mathcal{T} = \bigcirc \ast \text{\sc{Set}}(\mathcal{T}).
\end{equation}
Note that Cayley-trees are a particular family of labelled simply generated trees as described in Section~\ref{Ssec:InvTrees}, where the degree-weight sequence $(\varphi_{\ell})_{\ell \ge 0}$ is given by $\varphi_{\ell} = \frac{1}{\ell!}$. It is well-known that there are exactly $T_{n} = n^{n-1}$ different Cayley-trees of size $n$ (see, e.g., \cite{FlaSed2009,Stanley}) and in the random tree model, which we will always assume here, each of these trees may occur with the same probability when considering a size-$n$ tree.

The r.v.\ $R_{n}$, counting the number of records in a random size-$n$ Galton-Watson tree (i.e., in a simply generated tree), has been studied by Janson~\cite{Janson2006b} showing (after a suitable scaling by $\frac{1}{\sqrt{n}}$) a Rayleigh limiting distribution result; in particular, for Cayley-trees it holds $\frac{R_{n}}{\sqrt{n}} \claw \text{Rayleigh}(1)$. Here we introduce the r.v.\ $R_{n,j}$, which counts the number of record-subtrees of size $j$ in a random Cayley-tree of size $n$.
Of course, the random variables $R_{n,j}$, $1 \le j \le n$, are a refinement of $R_{n}$ and are related by the identity
\begin{equation*}
  R_{n} = \sum_{j=1}^{n} R_{n,j}.
\end{equation*}
As has been pointed out already in \cite{Janson2006b}, records in trees are closely related to a certain node removal procedure for trees. Starting with a tree $T$ one chooses a node $x \in T$ at random and cuts off the subtree $T''$ of $T$ rooted at $x$, and iterates this cutting procedure with the remaining subtree $T'$ until only the empty subtree remains. The r.v.\ $C_{n}^{[v]}$ counting the number of (vertex) cuts required to cut-down the whole tree by this cutting procedure when starting with a random Cayley-tree of size $n$ is then distributed as $R_{n}$, i.e., $R_{n} \law C_{n}^{[v]}$. 
We can extend this relation by considering the r.v.\ $C_{n,j}^{[v]}$ counting the number of subtrees of size $j$, which are cut-off during the (vertex) cutting procedure when starting with a random Cayley-tree of size $n$, where it holds $R_{n,j} \law C_{n,j}^{[v]}$. This can be seen easily by means of coupling arguments given in \cite{Janson2006b}: consider the node-removal procedure, where, starting with a tree $T$, in each step the node with smallest label amongst all nodes in the remaining tree is selected and together with all its descendants detached from the tree. Then it holds that node $x$ is a min-record in the tree $T$ if and only if node $x$ is selected as a vertex cut during this node-removal procedure and in this case the record-subtree (and thus its size) rooted at $x$ corresponds to the subtree (with its respective size), which is removed in this cut.

We will show that $R_{n,j}$ and thus also $C_{n,j}^{[v]}$ has factorial moments of mixed Poisson type yielding the following theorem.
\begin{theorem}\label{the:Records}
The random variable $R_{n,j}$, counting the number of record-subtrees of size $j$ in a random Cayley-tree of size $n$, has, for $n\to\infty$ and arbitrary $1 \le j=j(n) \le n$, 
asymptotically factorial moments of mixed Poisson type with a Rayleigh mixing distribution $X$ and scale parameter $\lambda_{n,j} = \frac{\sqrt{n} j^{j-1}}{j! e^{j}}$:
\begin{equation*}
\E(\fallfak{R_{n,j}}s) = \lambda_{n,j}^{s} \, 2^{\frac{s}{2}} \, \Gamma\big(\frac{s}{2}+1\big) \cdot \big(1+o(1)\big), \enspace \text{for $j=o(n)$}, \quad
\E(\fallfak{R_{n,j}}s) = \mathcal{O}\big(\lambda_{n,j}^{s}\big).
\end{equation*}
\begin{itemize}
\item[(i)] for $\lambda_{n,j}\to\infty$, the random variable $\frac{R_{n,j}}{\lambda_{n,j}}$ converges in distribution, with convergence of all moments, to $X$. 

\item[(ii)] for $\lambda_{n,j}\to\rho\in[0,\infty)$, the random variable $R_{n,j}$ converges in distribution, with convergence of all moments, to a mixed Poisson distributed random variable $Y\law \MPo(\rho X)$. 
\end{itemize}
Moreover, the random variable $Y\law \MPo(\rho X)$ converges, for $\rho\to\infty$, after scaling, to its mixing distribution $X$: $\frac{Y}{\rho}\claw X$, with convergence of all moments.
\end{theorem}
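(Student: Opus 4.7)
The plan is to obtain the mixed Poisson-Rayleigh limit by the method of factorial moments, exactly as in Lemma~\ref{MOMSEQMainLemma}. To that end, I would first derive a usable (asymptotically explicit) expression for $\E(\fallfak{R_{n,j}}s)$ and then compare it with the Rayleigh moment sequence $\mu_s=2^{s/2}\Gamma(s/2+1)$ from Example~\ref{ExampleRayleigh}. Writing
\[
\fallfak{R_{n,j}}s=\sum^{\ne}_{(V_1,\dots,V_s)} \boldsymbol{1}_{\{V_1,\dots,V_s\text{ are size-}j\text{ record-subtrees of }T\}},
\]
where the sum runs over ordered $s$-tuples of distinct $j$-subsets of $\{1,\dots,n\}$, one sees that only pairwise disjoint tuples contribute.

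For each such disjoint tuple $(V_1,\dots,V_s)$ with roots $r_i=\min V_i$ I would enumerate the rooted Cayley trees $T$ in which every $V_i$ appears as a record-subtree by means of a contraction: collapse each $V_i$ to a ``super-vertex'' $*_i$ carrying the effective label $r_i$. Then $T$ is reconstructed from three ingredients. First, a rooted labeled tree on each $V_i$ with root $r_i$, contributing the factor $j^{j-2}$ per $V_i$ (for $j\ge 2$; the case $j=1$ is trivial). Second, a rooted Cayley skeleton on the $n-sj+s$ effective vertices---the residual labels together with $*_1,\dots,*_s$---subject to the constraints that every strict ancestor of $*_i$ in the skeleton carries an effective label larger than $r_i$ (the record property) and every child of $*_i$ carries an effective label smaller than $r_i$ (the maximality condition). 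Third, for each $*_i$ of skeleton-degree $k_i$, a choice of which node of $V_i$ becomes the parent of the corresponding skeleton-child, contributing $j^{k_i}$. The constrained skeleton enumeration weighted by $\prod_{i=1}^{s} j^{k_i}$ is then the technical heart of the argument, and I would attack it through the exponential generating function $T(z)=z\,e^{T(z)}$ of rooted Cayley trees, either by a direct Lagrange inversion extraction of coefficients, or by first summing over the record-tree $\hat T$ (a rooted decreasing tree whose vertices are the records of $T$) and then summing over its decorations by the associated record-subtrees, since the records together with their ancestor order inherited from $T$ form precisely such a decreasing tree.

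Once a closed-form (or controlled asymptotic) expression for $\E(\fallfak{R_{n,j}}s)$ is in hand, an application of Stirling's formula~\eqref{MOMSEQStirling} should yield
\[
\E(\fallfak{R_{n,j}}s)=\lambda_{n,j}^{s}\cdot 2^{s/2}\,\Gamma\!\big(\tfrac{s}{2}+1\big)\cdot(1+o(1))\qquad\text{for }j=o(n),
\]
with $\lambda_{n,j}=\sqrt{n}\,j^{j-1}/(j!\,e^{j})$, while the $\Gro(\lambda_{n,j}^s)$ bound in the remaining range of $j$ follows from elementary estimates on the same expression. The factor $j^{j-1}/(j!\,e^{j})$ is precisely the Borel probability mass $T_j/(j!\,e^{j})$ associated with $T_j=j^{j-1}$ evaluated at the singular point $z=1/e$ of $T(z)$, which is the natural location where the Rayleigh scaling $\sqrt n$ emerges by a local-limit / saddle-point analysis. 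Since the Rayleigh moments satisfy Carleman's criterion, Lemmas~\ref{MOMSEQLemmaUniqueness} and~\ref{MOMSEQMainLemma} together with Example~\ref{ExampleRayleigh} yield both parts (i) and (ii) of Theorem~\ref{the:Records}, while the concluding assertion $Y/\rho\claw X$ as $\rho\to\infty$ is exactly the last sentence of Lemma~\ref{MOMSEQMainLemma}.

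The main obstacle is the constrained skeleton enumeration: the record and maximality constraints are global conditions on label-ordered paths from the root, so the sum over skeletons does not factorize across the $s$ super-vertices without additional care. Isolating the contribution of each $*_i$ through the decreasing record-tree decomposition---so that the $s$ marked record-subtrees become $s$ designated vertices of a decreasing tree on all the records of $T$---looks like the cleanest way to disentangle these constraints, and reduces the whole computation to standard Cayley-tree generating-function identities and Stirling asymptotics.
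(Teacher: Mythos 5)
Your high-level frame is the right one: reduce the theorem to an asymptotic expansion of $\E(\fallfak{R_{n,j}}{s})$ of mixed Poisson type with the Rayleigh moments $2^{s/2}\Gamma(\frac{s}{2}+1)$ and then invoke Lemma~\ref{MOMSEQMainLemma} together with Lemma~\ref{MOMSEQLemmaUniqueness} and Example~\ref{ExampleRayleigh}; the identification of the scale parameter $\lambda_{n,j}=\sqrt{n}\,j^{j-1}/(j!\,e^{j})$ as $\sqrt{n}$ times the Borel weight $\tilde{T}_{j}e^{-j}$, and the count $j^{j-2}$ for the internal structure of a marked block, are also correct. But the entire substance of the theorem is the asymptotic formula for the factorial moments, and that is exactly the step you do not carry out: you name the ``constrained skeleton enumeration'' as the main obstacle, gesture at a remedy via the decreasing record tree, and never produce a closed or asymptotically controlled expression for $\E(\fallfak{R_{n,j}}{s})$. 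Both displayed claims --- the $(1+o(1))$ expansion for $j=o(n)$ \emph{and} the uniform $\mathcal{O}(\lambda_{n,j}^{s})$ bound valid for all $1\le j\le n$ (which is needed to cover case (ii) with $\rho\in[0,\infty)$, including the degenerate range) --- are asserted to ``follow from the same expression'', but there is no expression. As it stands this is a proof plan with its central computation missing, so there is a genuine gap.

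The paper sidesteps the global record/maximality constraints that block your enumeration by passing to the equivalent node-removal process $C_{n,j}^{[v]}\law R_{n,j}$: the random preservation property gives the one-step recurrence $R_{n,j}\law R_{K_n,j}+\boldsymbol{1}_{\{n-K_n=j\}}$ with explicit splitting probabilities, which translates into a first-order linear ODE for $F(z,v)=\sum_n\tilde{T}_n\E(v^{R_{n,j}})z^n$ with the explicit solution
\begin{equation*}
F_{z}(z,v)=\frac{e^{T(z)}+(v-1)\tilde{T}_{j}z^{j-1}}{1-T(z)-(v-1)\tilde{T}_{j}z^{j}},
\qquad\text{hence}\qquad
[w^{s}]\tilde F_{z}(z,w)=\frac{\tilde{T}_{j}^{s}z^{js-1}}{(1-T(z))^{s+1}},
\end{equation*}
and singularity analysis of $(1-T(z))^{-s-1}$ at $z=e^{-1}$ (expansion for the main term, an $\mathcal{O}$-bound for uniformity) plus the duplication formula delivers both displayed estimates. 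If you insist on your direct route, note that the sequence-type factor $(1-T(z))^{-s-1}$ is precisely what your ``sum over the record tree, then decorate'' disentanglement must reproduce; deriving it that way amounts to re-proving the sequence-of-trees decomposition underlying the cutting recurrence, so you would need to supply that identity explicitly before Stirling's formula and Lemma~\ref{MOMSEQMainLemma} can be applied.
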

\begin{remark}
Stirling's formula for the Gamma function gives $\lambda_{n,j} = \frac{\sqrt{n} j^{j-1}}{j! e^{j}} \sim \sqrt{\frac{n}{2 \pi j^{3}}}$, for $j,n \to \infty$, thus the critical phase occurs at $j = \Theta(n^{\frac{1}{3}})$: $\E(R_{n,j}) \to \infty$, for $j \ll n^{\frac{1}{3}}$, whereas $\E(R_{n,j}) \to 0$, for $j \gg n^{\frac{1}{3}}$. In all succeeding examples in this section the critical phase behaviour also occurs at $j=\Theta(n^{\frac{1}{3}})$.
\end{remark}
\begin{proof}[Proof of Theorem~\ref{the:Records}]
We consider the description of the problem via the node removal procedure. This immediately yields the following stochastic recurrence for the r.v.\ $R_{n,j} \law C_{n,j}^{[v]}$:
\begin{equation}\label{eqn:Record_rec}
  R_{n,j} \law R_{K_{n},j} + \boldsymbol{1}_{\{n-K_{n}=j\}}, \quad \text{for $1 \le j \le n$},
\end{equation}
with $R_{n,j} = 0$, for $0 \le n < j$, and where the r.v.\ $K_{n}$ measures the size of the subtree remaining after selecting a random node and removing the subtree rooted at it from a randomly selected size-$n$ Cayley-tree.

In the following we will compute the splitting probabilities $p_{n,k} := \mathbb{P}\{K_{n} = k\}$, with $0 \le k \le n-1$, and by doing this we also show that the recurrence \eqref{eqn:Record_rec} is indeed valid, i.e., that the subtree $T'$ (let us assume of size $k \ge 1$) remaining after removing the subtree containing the selected node $x$ of a random size-$n$ Cayley-tree $T$ is (after an order-preserving relabelling of the nodes) again a random Cayley-tree of size $k$ (the so-called \emph{random preservation property} holds).

This can be done by simple combinatorial reasoning. Consider a pair $(T,x)$ of a size-$n$ Cayley-tree $T$ and a node $x \in T$. When detaching the subtree rooted at $x$ from $T$, we obtain a pair $(T',T'')$
of subtrees with $T''$ containing $x$ and $T'$ the possibly empty remaining subtree. Of course, $T'$ is the empty subtree exactly if $x$ is the root-node of $T$ and thus there are exactly $T_{n}$ pairs $(T,x)$ yielding $|T'| = k = 0$. Let us now assume that $1 \le |T'| = k \le n-1$. After an order-preserving relabelling of $T'$ and $T''$ with labels $\{1, \dots, k\}$ and $\{1, \dots, n-k\}$, respectively, both subtrees are Cayley-trees of size $k$ and $n-k$, respectively. Consider now a particular pair $(\tilde{T}',\tilde{T}'')$ of Cayley-trees of size $|\tilde{T}'|=k$ and $|\tilde{T}''|=n-k$, respectively, and let us count the number of pairs $(T,x)$, with $T$ a size-$n$ Cayley-tree and $x \in T$, yielding the pair $(\tilde{T}',\tilde{T}'')$ of subtrees after cutting. By constructing such pairs $(T,x)$, one obtains that there are exactly $k \binom{n}{k}$ possibilities ($k$ possible ways of attaching $\tilde{T}''$ to a node in $\tilde{T}'$ and $\binom{n}{k}$ possibilities of distributing the labels $\{1, \dots, n\}$ order-preserving to the subtrees), independent of the chosen pair of trees; thus the random preservation property holds.

Moreover, one obtains that there are exactly $k \binom{n}{k} T_{k} T_{n-k}$ pairs $(T,x)$ splitting after a cut into a pair $(T',T'')$ of subtrees with respective sizes $k$ and $n-k$, for $1 \le k \le n-1$.
Of course, in total there are $n T_{n}$ pairs $(T,x)$ and thus we get the following result for the splitting probabilities $p_{n,k}$:
\begin{equation*}
  p_{n,k} = \mathbb{P}\{K_{n}=k\} = \begin{cases} \frac{k \tilde{T}_{k} \tilde{T}_{n-k}}{n \tilde{T}_{n}}, & \quad 1 \le k \le n-1, \\ \frac{1}{n}, & \quad k=0, \end{cases}
\end{equation*}
where we use throughout this section the abbreviation
\begin{equation}
\label{eqn:Tntilde}
\tilde{T}_{n} := \frac{T_{n}}{n!} = \frac{n^{n-1}}{n!}.
\end{equation}

In order to treat the stochastic recurrence \eqref{eqn:Record_rec} and to compute the asymptotic behaviour of the (factorial) moments of $R_{n,j}$ we find it appropriate to introduce the generating function
\begin{equation*}
  F(z,v) := F_{j}(z,v) = \sum_{n \ge 1} \tilde{T}_{n} \E(v^{R_{n,j}}) z^{n} = \sum_{n \ge 1} \sum_{m \ge 0} \tilde{T}_{n} \mathbb{P}\{R_{n,j}=m\} z^{n} v^{m}.
\end{equation*}
Starting with \eqref{eqn:Record_rec}, straightforward computations (which are omitted here) yield then the following differential equation
\begin{equation*}
  F_{z}(z,v) = T(z) F_{z}(z,v) + \frac{T(z)}{z} + \tilde{T}_{j} z^{j-1} (v-1) + \tilde{T}_{j} z^{j} F_{z}(z,v) (v-1),
\end{equation*}
where the so-called tree function (the exponential generating function of the number of size-$n$ Cayley-trees) appears:
\begin{equation}
\label{eqn:Tree-function}
T(z) = \sum_{n \ge 1} \tilde{T}_{n} z^{n} = \sum_{n \ge 1} \frac{n^{n-1}}{n!} z^{n}.
\end{equation}
Simple manipulations and using the well-known functional equation of the tree function (which is thus closely related to the Lambert-W function),
\begin{equation*}
  T(z) = z e^{T(z)},
\end{equation*}
give then the following explicit formula for the derivative of $F(z,v)$ w.r.t.\ $z$:
\begin{equation}\label{eqn:Fzv_formula}
  F_{z}(z,v) = \frac{e^{T(z)} + (v-1) \tilde{T}_{j} z^{j-1}}{1-T(z)-(v-1)\tilde{T}_{j}z^{j}}.
\end{equation}

To get the (factorial) moments of $R_{n,j}$ we use the substitution $w := v-1$ and introduce $\tilde{F}(z,w) := F(z,w+1)$. 
Extracting coefficients $[w^{s}]$, $s \ge 1$, from
\begin{equation*}
  \tilde{F}_{z}(z,w) = \frac{e^{T(z)} + w \tilde{T}_{j} z^{j-1}}{1-T(z)-w\tilde{T}_{j}z^{j}} = \frac{1}{z} \Big(\frac{1}{1-T(z)-w\tilde{T}_{j}z^{j}}-1\Big),
\end{equation*}
easily gives
\begin{equation*}
  [w^{s}] \tilde{F}_{z}(z,w) = \frac{\tilde{T}_{j}^{s} z^{js-1}}{(1-T(z))^{s+1}}.
\end{equation*}
We further obtain
\begin{equation}\label{eqn:coeff_Fzw}
  [z^{n}w^{s}] \tilde{F}(z,w) = \frac{1}{n} [z^{n-1} w^{s}] \tilde{F}_{z}(z,w) = \frac{\tilde{T}_{j}^{s}}{n} [z^{n-js}] \frac{1}{(1-T(z))^{s+1}}.
\end{equation}

It is not difficult to extract coefficients from \eqref{eqn:coeff_Fzw} and stating an explicit formula for $[z^{n}w^{s}] \tilde{F}(z,w)$ and the factorial moments of $R_{n,j}$; however, for asymptotic considerations it is easier to use well-known analytic properties of the tree function $T(z)$ and deduce from it the desired asymptotic growth behaviour of $\E(R_{n,j}^{\underline{s}})$.
Namely, we use standard applications of so-called singularity analysis, see~\cite{FlaSed2009}, to transfer the local behaviour of a generating function in a complex neighbourhood of the dominant singularity (i.e., the singularity of smallest modulus; we are here only concerned with functions with a unique dominant singularity) to the asymptotic behaviour of its coefficients.
It holds (see, e.g., \cite{FlaSed2009}) that the tree function $T(z)$ has a unique dominant singularity (a branch point) at $z=e^{-1}$, where the function evaluates to $T(e^{-1}) = 1$ and where it admits the following local expansion:
\begin{equation}\label{eqn:Tz_expansion}
  T(z) = 1 - \sqrt{2} \sqrt{1-ez} + \mathcal{O}(1-ez).
\end{equation}
Thus the function $(1-T(z))^{-s-1}$ also has a unique dominant singularity at $z=e^{-1}$ with the following local bound:
\begin{equation*}
  \frac{1}{(1-T(z))^{s+1}} = \mathcal{O}\Big(\frac{1}{(1-ez)^{\frac{s+1}{2}}}\Big).
\end{equation*}
Singularity analysis yields then
\begin{equation*}
  [z^{n}] \frac{1}{(1-T(z))^{s+1}} = \mathcal{O}\Big(e^{n} n^{\frac{s-1}{2}}\Big).
\end{equation*}
Therefore, \eqref{eqn:coeff_Fzw} yields
\begin{equation*}
  [z^{n}w^{s}] \tilde{F}(z,w) = \mathcal{O}\Big(\big(\frac{\tilde{T}_{j}}{e^{j}}\big)^{s} e^{n} n^{\frac{s-3}{2}}\Big).
\end{equation*}
This, together with Stirling's formula for the Gamma function~\eqref{MOMSEQStirling}, shows the following bound for the $s$-th moments of $R_{n,j}$, which holds uniformly for all $1 \le j  \le n$:
\begin{equation}\label{eqn:Rnj_bound}
  \E(R_{n,j}^{\underline{s}}) = \frac{s!}{\tilde{T}_{n}} [z^{n} w^{s}] \tilde{F}(z,w) = \mathcal{O}\Big(\big(\frac{\tilde{T}_{j}}{e^{j}}\big)^{s} \frac{e^{n} n^{\frac{s-3}{2}}}{\tilde{T}_{n}}\Big)
  = \mathcal{O}\Big(\big(\frac{\sqrt{n} \, \tilde{T}_{j}}{e^{j}}\big)^{s}\Big).
\end{equation}

To get the mixed Poisson behaviour for $j=o(n)$ we use the refined expansion
\begin{equation*}
  \frac{1}{(1-T(z))^{s+1}} = \frac{1}{2^{\frac{s+1}{2}} (1-ez)^{\frac{s+1}{2}}} \cdot \Big(1+\mathcal{O}\big(\frac{1}{\sqrt{1-ez}}\big)\Big),
\end{equation*}
locally around $z=e^{-1}$, which can be obtained from~\eqref{eqn:Tz_expansion}. Singularity analysis gives then the expansion
\begin{equation}\label{eqn:znTz}
  [z^{n}] \frac{1}{(1-T(z))^{s+1}} = \frac{e^{n} n^{\frac{s-1}{2}}}{2^{\frac{s+1}{2}} \big(\frac{s-1}{2}\big)!} \cdot \Big(1+\mathcal{O}\big(\frac{1}{\sqrt{n}}\big)\Big).
\end{equation}
Thus we get for $j=o(n)$ the stated behaviour of the the $s$-th factorial moments of $R_{n,j}$:
\begin{align}
  & \E(R_{n,j}^{\underline{s}}) = \frac{s!}{\tilde{T}_{n}} [z^{n}w^{s}] \tilde{F}(z,w) = \frac{s! \tilde{T}_{j}^{s} e^{n-js} (n-js)^{\frac{s-1}{2}}}{\tilde{T}_{n} n 2^{\frac{s+1}{2}} \big(\frac{s-1}{2}\big)!} \cdot \Big(1+\mathcal{O}\big(\frac{1}{\sqrt{n-js}}\big)\Big) \notag \\
  & \quad = \Big(\frac{\sqrt{n} \, \tilde{T}_{j}}{e^{j}}\Big)^{s} \frac{s! \sqrt{\pi}}{2^{\frac{s}{2}} \big(\frac{s-1}{2}\big)!} \cdot \big(1+o(1)\big)
  = \Big(\frac{\sqrt{n} \, \tilde{T}_{j}}{e^{j}}\Big)^{s} \, 2^{\frac{s}{2}} \, \Gamma\big(\frac{s}{2}+1\big) \cdot \big(1+o(1)\big), \label{eqn:Rnj_moments}
\end{align}
where we used in the final step the duplication formula of the factorials:
\begin{equation}\label{eqn:duplication}
  \big(\frac{s-1}{2}\big)! = \frac{\sqrt{\pi} \, (s-1)!}{2^{s-1} \big(\frac{s}{2}-1\big)!}.
\end{equation}
The mixed Poisson limit law with Rayleigh mixing distribution as stated in Theorem~\ref{the:Records} follows then from \eqref{eqn:Rnj_bound} and \eqref{eqn:Rnj_moments} by applying Lemma~\ref{MOMSEQMainLemma}.
\end{proof}

\subsection{Edge-cutting in Cayley-trees\label{Ssec:Cutting}}
The following prominent edge-cutting procedure for trees is closely related to the node removal procedure considered in Section~\ref{Ssec:Records}. Starting with a tree $T$ one chooses an edge $e \in T$ and removes it from $T$. After that $T$ decomposes into two subtrees $T'$ and $T''$, where we assume that $T'$ contains the original root of $T$. We discard the subtree $T''$ and continue the edge-cutting procedure with $T'$ until we have isolated the root-node of the original tree $T$. This cutting-down procedure has been introduced in \cite{MeiMoo1970}, where the number of random cuts $C_{n}$ to isolate the root-node of a random Cayley-tree of size $n$, where in each cutting-step an edge from the remaining tree is chosen uniformly at random, has been studied yielding asymptotic formul{\ae} for the first two moments of $C_{n}$. The Rayleigh limiting distribution of $C_{n}$ for Cayley-trees and other families of simply generated trees has been obtained in \cite{Pan2003,Pan2006} and in a more general setting by Janson~\cite{Janson2006b}; in particular, for Cayley-trees one obtains $\frac{C_{n}}{\sqrt{n}} \claw \text{Rayleigh}(1)$. Moreover, in \cite{Janson2006b} it was shown in general that for Galton-Watson tree families (thus containing Cayley-trees) the random variables $C_{n}$ and $C_{n}^{[v]}$ (as introduced in Section~\ref{Ssec:Records}) for the edge-cutting procedure and the node-removal procedure, respectively, have the same limiting distribution behaviour. A number of works have analysed the edge-cutting procedure and related processes using the connection between Cayley-trees and the so-called continuum random tree,
in particular see the work of Addagio-Berry, Broutin and Holmgren~\cite{ABBH2013}, and the recent works of Bertoin~\cite{Bertoin2012,Bertoin2012+}.

In this section we consider a refinement of the r.v.\ $C_{n}$ for Cayley-trees, namely we study the behaviour of the r.v.\ $C_{n,j}$, counting the number of subtrees of size $j$ cut-off during the random edge-cutting procedure when starting with a random size-$n$ Cayley-tree until the root-node is isolated. Of course, it holds
\begin{equation*}
  C_{n} = \sum_{j=1}^{n-1} C_{n,j}.
\end{equation*}
Before continuing we want to remark that an alternative description of the problem can be given via edge-records in edge-labelled trees: given a size-$n$ tree $T$ we first distribute the labels $\{1, \dots, |T|-1\}$ randomly to the edges of $T$. An edge-record in $T$ is then an edge $e=(x,y)$, where $y$ is a child of $x$, with smallest label amongst all edges on the path from the root-node of $T$ to $y$. Analogous to Section~\ref{Ssec:Records} (and stated already in \cite{Janson2006b}) one gets that the r.v.\ $R_{n}^{[e]}$ counting the number of edge-records in a random size-$n$ Cayley-tree is distributed as $C_{n}$, i.e., $R_{n}^{[e]} \law C_{n}$. Moreover, the edge-records $e_{1}, \dots, e_{k}$ of an edge-labelled tree $T$ naturally decompose $T$ into the root-node and $k$ \emph{edge-record subtrees} $S_{1}, \dots, S_{k}$, obtained from $T$ by removing the root-node of $T$ and all edges $e_{1}, \dots, e_{k}$. Again we can introduce the r.v.\ $R_{n,j}^{[e]}$, which counts the number of edge-record subtrees of size $j$ in a random edge-labelled Cayley-tree of size $n$. It is then immediate to see that $R_{n,j}^{[e]} \law C_{n,j}$.

In Figure~\ref{fig:CutDown} we illustrate the edge-cutting procedure for a particular tree.
\begin{figure}
\begin{center}
  \includegraphics[height=2.5cm]{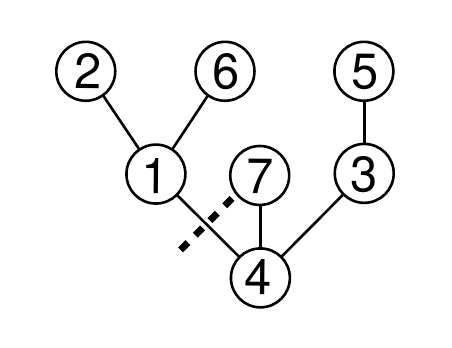} \quad \raisebox{0.4cm}{\Large{$\Rightarrow$}} \quad \includegraphics[height=2.5cm]{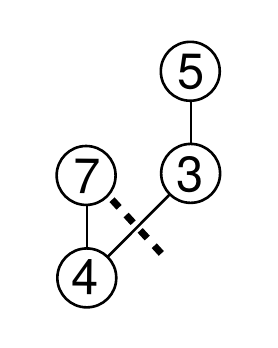} \quad \raisebox{0.4cm}{\Large{$\Rightarrow$}} \quad \includegraphics[height=2.5cm]{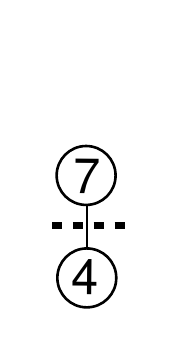} \quad \raisebox{0.4cm}{\Large{$\Rightarrow$}} \quad \includegraphics[height=2.5cm]{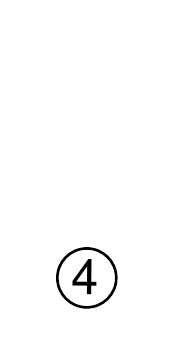}
\end{center}
\caption{Cutting-down a tree of size $7$ with three edge-cuts. The respective sizes of the subtrees cut-off during this procedure are $3$, $2$, and $1$.\label{fig:CutDown}}
\end{figure}

In the following theorem we state that $C_{n,j}$ (and thus also $R_{n,j}^{[e]}$) has factorial moments of mixed Poisson type with a Rayleigh mixing distribution. The method of proof is analogous to the one presented in Section~\ref{Ssec:Records}, but due to the less explicit nature of the formul{\ae} occurring, the proof steps are more technical and a bit lengthy.
\begin{theorem}\label{the:Edgecut}
The random variable $C_{n,j}$, counting the number of subtrees of size $j$, which are cut-off during the edge-cutting procedure starting with a random Cayley-tree of size $n$, has, for $n\to\infty$ and arbitrary $1 \le j=j(n) \le n-1$, 
asymptotically factorial moments of mixed Poisson type with a Rayleigh mixing distribution $X$ and scale parameter $\lambda_{n,j} = \frac{\sqrt{n} j^{j-1}}{j! e^{j}}$:
\begin{equation*}
\E(\fallfak{C_{n,j}}s) = \lambda_{n,j}^{s} \, 2^{\frac{s}{2}} \, \Gamma\big(\frac{s}{2}+1\big) \cdot \big(1+o(1)\big), \enspace \text{for $j=o(n)$}, \quad
\E(\fallfak{C_{n,j}}s) = \mathcal{O}\big(\lambda_{n,j}^{s}\big).
\end{equation*}
\begin{itemize}
\item[(i)] for $\lambda_{n,j}\to\infty$, the random variable $\frac{C_{n,j}}{\lambda_{n,j}}$ converges in distribution, with convergence of all moments, to $X$. 

\item[(ii)] for $\lambda_{n,j}\to\rho\in[0,\infty)$, the random variable $C_{n,j}$ converges in distribution, with convergence of all moments, to a mixed Poisson distributed random variable $Y\law \MPo(\rho X)$. 
\end{itemize}
\end{theorem}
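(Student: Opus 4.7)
The proof follows the blueprint of Theorem~\ref{the:Records}, with three technical complications: the generating function will satisfy a first-order linear ODE rather than admitting a directly usable closed form, its integrated solution involves an exponential of an infinite series, and the subsequent singularity analysis requires identifying which term in a Bell-polynomial expansion dominates. The plan is to (i) establish the splitting probabilities and the stochastic recurrence, (ii) derive and integrate the generating function ODE, (iii) perform singularity analysis on $[w^{s}]\tilde{G}(z,w)$, and (iv) conclude via Lemma~\ref{MOMSEQMainLemma}.

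For step~(i), a counting of pairs $(T,e)$ with $T$ a size-$n$ Cayley tree and $e$ an edge, together with the random preservation argument already used in the proof of Theorem~\ref{the:Records}, yields the splitting probabilities
\[
\mathbb{P}\{K_n = k\} \;=\; \frac{k\,\tilde{T}_k\,\tilde{T}_{n-k}}{(n-1)\,\tilde{T}_n}, \qquad 1 \le k \le n-1,
\]
where $K_n$ denotes the size of the root-containing subtree after one random edge cut and $\tilde{T}_n$ is given by \eqref{eqn:Tntilde}. The only difference to the node-removal setting is the denominator $n-1$ (number of edges) and the absence of a $k=0$ outcome. This produces the stochastic recurrence $C_{n,j} \law C_{K_n,j} + \boldsymbol{1}_{\{n-K_n = j\}}$, for $n \ge 2$ and $1 \le j \le n-1$. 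Introducing $G(z,v) := \sum_{n\ge 1} \tilde{T}_n\,\E(v^{C_{n,j}})\, z^n$, multiplying the recurrence by $(n-1)z^{n-1}$ and summing leads, using the tree function \eqref{eqn:Tree-function}, to the separable linear ODE
\[
z\bigl(1 - T(z) - (v-1)\tilde{T}_j z^{j}\bigr)\, G_z(z,v) \;=\; G(z,v).
\]
Setting $w = v-1$, $\tilde{G}(z,w) = G(z,w+1)$, using the boundary value $\tilde{G}(z,0) = T(z)$ and the identity $T'(z)/T(z) = 1/\bigl(z(1-T(z))\bigr)$ (a consequence of the functional equation $T = z e^T$), this integrates to the quasi-closed form
\[
\tilde{G}(z,w) \;=\; T(z)\,\exp\!\Bigl(\sum_{k\ge 1} w^{k}\, h_k(z)\Bigr),\qquad h_k(z) \;=\; \tilde{T}_j^{k}\!\int_{0}^{z} \frac{u^{jk-1}\,du}{(1-T(u))^{k+1}}.
\]

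The main obstacle is the singularity analysis of $[w^{s}]\tilde{G}(z,w)$ at the dominant singularity $z = e^{-1}$. From \eqref{eqn:Tz_expansion}, direct integration gives $h_1(z) \sim \tfrac{1}{2}\,\tilde{T}_j e^{-j}\,\log\bigl(1/(1-ez)\bigr)$, and for $k \ge 2$,
\[
h_k(z) \;\sim\; \frac{\tilde{T}_j^{k}\, e^{-jk}}{(k-1)\,2^{(k-1)/2}}\,(1-ez)^{-(k-1)/2}, \qquad z \to e^{-1}.
\]
Expanding $[w^{s}]\exp\bigl(\sum_{k}w^{k}h_k\bigr)$ as a finite sum over partitions $\sum_k k m_k = s$, each term $\prod_k h_k^{m_k}/m_k!$ contributes a singular order $\sum_k m_k(k-1)/2 = (s - \sum_k m_k)/2$, which is uniquely maximised by $m_s = 1$ (all other $m_k = 0$). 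Hence the single term $h_s(z)$ dominates, every other partition is subdominant by at least one factor $\sqrt{1-ez}$, and the logarithmic factors produced by $h_1$-contributions are of lower order as well.

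Consequently, transfer theorems of singularity analysis, Stirling's formula $\tilde{T}_n \sim e^{n}/(n^{3/2}\sqrt{2\pi})$, and the duplication formula \eqref{eqn:duplication} yield for $j = o(n)$ the asymptotic
\[
\E\bigl(\fallfak{C_{n,j}}{s}\bigr) \;=\; \frac{s!}{\tilde{T}_n}\,[z^{n}][w^{s}]\tilde{G}(z,w) \;=\; \lambda_{n,j}^{s}\, 2^{s/2}\,\Gamma\bigl(\tfrac{s}{2}+1\bigr)\,(1+o(1)),
\]
with $\lambda_{n,j} = \sqrt{n}\,\tilde{T}_j/e^{j}$, matching the Rayleigh moment sequence of Example~\ref{ExampleRayleigh}. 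A uniform bound on each $h_k$ by its leading singular order, combined with the subdominance analysis and the fact that the number $p(s)$ of partitions of $s$ is finite, gives the estimate $\E(\fallfak{C_{n,j}}{s}) = \mathcal{O}(\lambda_{n,j}^{s})$ valid throughout the full range $1 \le j \le n-1$; this also takes care of the degenerate case $\rho = 0$ in statement~(ii). An invocation of Lemma~\ref{MOMSEQMainLemma}, exactly as at the end of the proof of Theorem~\ref{the:Records}, then yields both limit laws.
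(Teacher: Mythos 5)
Your proposal is correct and follows essentially the same route as the paper's proof: the same splitting probabilities and stochastic recurrence, the same first-order linear ODE for $G_j(z,v)$, the same solution $T(z)\exp\bigl(\sum_{\ell\ge1}w^{\ell}\alpha_{j,\ell}(z)\bigr)$ (your $h_k$ are the paper's $\alpha_{j,k}$), the same identification of the single-part term $\alpha_{j,s}$ as dominant with the multi-part and logarithmic contributions subdominant, and the same conclusion via the duplication formula and Lemma~\ref{MOMSEQMainLemma}. The only cosmetic differences are that you index the expansion of the exponential by partitions rather than compositions, and that the paper makes the final transfer uniform in $j$ by extracting $[z^{n-k-sj}](1-T(z))^{-s-1}$ exactly and splitting the resulting convolution sum at $k=\lfloor n/2\rfloor$, where you appeal to transfer theorems more directly.
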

\begin{remark}
According to Theorems~\ref{the:Records} and \ref{the:Edgecut} the r.v.\ $C_{n,j}$ and $C_{n,j}^{[v]}$ (and thus also $R_{n,j}^{[e]}$) and $R_{n,j}$, for the edge- and vertex-versions of the cutting procedures as considered in Section~\ref{Ssec:Records}-\ref{Ssec:Cutting}, have the same limiting distribution behaviour. Janson~\cite{Janson2006b} was able to bound the difference between the random variables $R_{n}$ and $R_{n}^{[e]}$ (i.e., between the number of node- and edge-records) in a suitable metric and thus to show directly the same limiting behaviour of these r.v. It would be interesting to extend his proof technique to the refined quantities studied here.
\end{remark}
\begin{proof}[Proof of Theorem~\ref{the:Edgecut}]
Decomposing a tree according to the first cut of the edge-cutting procedure immediately yields the following stochastic recurrence for the r.v.\ $C_{n,j}$:
\begin{equation}\label{eqn:EdgeCutting_rec}
  C_{n,j} \law C_{K_{n}^{[e]},j} + \boldsymbol{1}_{\{n-K_{n}^{[e]}=j\}}, \quad \text{for $1 \le j < n$},
\end{equation}
with $C_{n,j} = 0$, for $1 \le n \le j$, and where the r.v.\ $K_{n}^{[e]}$ measures the size of the subtree containing the root of the original tree after cutting a random edge from a randomly selected size-$n$ Cayley-tree.
It is well-known~\cite{MeiMoo1970} (and can be shown completely analogous to the computations in the proof of Theorem~\ref{the:Records}) that the \emph{random preservation property} of Cayley-trees also holds for the edge-cutting procedure (thus implying correctness of \eqref{eqn:EdgeCutting_rec}) and that the splitting probabilities $p_{n,k}^{[e]}$ (i.e., the distribution of $K_{n}^{[e]}$) are given as follows
(we recall the definition $\tilde{T}_{n} = \frac{n^{n-1}}{n!}$ given in \eqref{eqn:Tntilde}):
\begin{equation*}
  p_{n,k}^{[e]} := \mathbb{P}\{K_{n}^{[e]}=k\} = \frac{k \tilde{T}_{k} \tilde{T}_{n-k}}{(n-1) \tilde{T}_{n}}, \quad \text{for} \enspace 1 \le k \le n-1 \enspace \text{and} \enspace n \ge 2.
\end{equation*}

Again, in order to treat the stochastic recurrence \eqref{eqn:EdgeCutting_rec} we introduce suitable generating functions:
\begin{equation*}
  G_{j}(z,v) := \sum_{n \ge 1} \tilde{T}_{n} \E(v^{C_{n,j}}) z^{n} = \sum_{n \ge 1} \sum_{m \ge 0} \tilde{T}_{n} \mathbb{P}\{C_{n,j}=m\} z^{n} v^{m}.
\end{equation*}
Straightforward computations give then the differential equation
\begin{equation*}
  z \frac{\partial}{\partial z}G_{j}(z,v) - G_{j}(z,v) = z T(z) \frac{\partial}{\partial z}G_{j}(z,v) + (v-1) \tilde{T}_{j} z^{j+1}  \frac{\partial}{\partial z}G_{j}(z,v),
\end{equation*}
where again the tree function $T(z) = \sum_{n \ge 1} \tilde{T}_{n} z^{n}$ as defined in \eqref{eqn:Tree-function} appears.
Solving this linear differential equation yields the following solution satisfying the initial condition $G_{j}(0,v) = 0$:
\begin{equation}\label{eqn:Gjzvsol}
  G_{j}(z,v) = T(z) \cdot \exp\Big(\int_{0}^{z} \frac{(v-1) \tilde{T}_{j} t^{j-1}}{(1-T(t)) (1-T(t) - (v-1) \tilde{T}_{j} t^{j})} \, dt\Big).
\end{equation}
To study the moments of $C_{n,j}$ we apply in \eqref{eqn:Gjzvsol} the substitution $w:=v-1$ and introduce the function $\tilde{G}_{j}(z,w) := G_{j}(z,w+1)$, which yields the following expansion w.r.t.\ $w$:
\begin{equation*}
  \tilde{G}_{j}(z,w) = T(z) \cdot \exp\Big(\sum_{\ell \ge 1} w^{\ell} \alpha_{j,\ell}(z) \Big)
  = T(z) \cdot \Big(1+\sum_{r \ge 1} \frac{1}{r!} \Big(\sum_{\ell \ge 1} w^{\ell} \alpha_{j,\ell}(z)\Big)^{r}\Big),
\end{equation*}
where we use the abbreviation
\begin{equation*}
  \alpha_{j,\ell}(z) := \tilde{T}_{j}^{\ell} \cdot \int_{0}^{z} \frac{t^{j \ell-1}}{(1-T(t))^{\ell+1}} dt.
\end{equation*}
This leads to the following explicit formula for the generating function of the \mbox{$s$-th} integer moments of $C_{n,j}$, which will be the starting point of the asymptotic considerations:
\begin{align}
  G_{j}^{[s]}(z) & := [w^{s}] \tilde{G}_{j}(z,w) = \sum_{n \ge 1} \frac{\tilde{T_{n}} \E\big(C_{n,j}^{\underline{s}}\big)}{s!} z^{n} \notag \\
  & = T(z) \cdot \sum_{r=1}^{s} \frac{1}{r!} \sum_{\begin{smallmatrix}\ell_{1}+\cdots+\ell_{r}=s, \\ \ell_{q} \ge 1, 1 \le q \le r\end{smallmatrix}} \prod_{q=1}^{r} \alpha_{j,\ell_{q}}(z), \quad \text{for $s \ge 1$}.\label{eqn:Gjszexp}
\end{align}

The following bounds on the growth of the coefficients of the functions appearing, which all can be obtained in a straightforward way by applying standard techniques as singularity analysis or approximating sums by integrals (we omit here some of the details), will play a key r{\^o}le in the asymptotic evaluation of the moments.
First, it holds, for $\ell$ arbitrary but fixed and uniformly for $1 \le j \le n$ and $n \to \infty$:
\begin{equation*}
  [z^{n}] \alpha_{j,\ell}(z) = \frac{\tilde{T}_{j}^{\ell}}{n} [z^{n-j\ell}] \frac{1}{(1-T(z))^{\ell+1}} = \mathcal{O}\Big(\frac{\tilde{T}_{j}^{\ell} e^{n} n^{\frac{\ell-3}{2}}}{e^{j \ell}}\Big).
\end{equation*}
This implies
\begin{equation}\label{eqn:alphajellqprod}
  [z^{n}] \left.\prod_{q=1}^{r} \alpha_{j,\ell_{q}}(z)\right|_{\begin{smallmatrix}\ell_{1}+\cdots+\ell_{r}=s,\\\ell_{q} \ge 1, 1 \le q \le r\end{smallmatrix}}
  = \mathcal{O}\bigg(\frac{\tilde{T}_{j}^{s} e^{n}}{e^{j s}} \cdot \sum_{\begin{smallmatrix}k_{1}+\cdots+k_{r}=n,\\ k_{q}\ge1, 1 \le q \le r\end{smallmatrix}} 
  \prod_{q=1}^{r} k_{q}^{\frac{\ell_{q}-3}{2}}\bigg).
\end{equation}
The sum occurring in the latter bound \eqref{eqn:alphajellqprod} can itself be bounded as follows:
\begin{equation*}
  \sum_{\begin{smallmatrix}k_{1}+\dots+k_{r}=n,\\ k_{q} \ge 1, 1 \le q \le r\end{smallmatrix}} \prod_{q=1}^{r} k_{q}^{\frac{\ell_{q}-3}{2}} = 
  \begin{cases}
    \mathcal{O}\Big(n^{\frac{s}{2} - \frac{r}{2} -1} \big(\log n\big)^{Q}\Big), & \enspace \text{for $s > r$},\\
     \mathcal{O}\Big(\frac{(\log n)^{s-1}}{n}\Big), & \enspace \text{for $s=r$},
  \end{cases}
\end{equation*}
with $Q = |\{q: \ell_{q}=1\}|$, thus yielding
\begin{equation*}
  [z^{n}] \left.\prod_{q=1}^{r} \alpha_{j,\ell_{q}}(z)\right|_{\begin{smallmatrix}\ell_{1}+\cdots+\ell_{r}=s,\\\ell_{q} \ge 1, 1 \le q \le r\end{smallmatrix}}
  = \begin{cases}
    \mathcal{O}\Big(\frac{\tilde{T}_{j}^{s} e^{n} n^{\frac{s}{2} - \frac{r}{2} -1} \big(\log n\big)^{Q}}{e^{js}}\Big), & \enspace \text{for $s > r$}, \\
    \mathcal{O}\Big(\frac{\tilde{T}_{j}^{s} e^{n} (\log n)^{s-1}}{e^{js} n}\Big), & \enspace \text{for $s=r$}.
  \end{cases}
\end{equation*}

Therefore, when considering the coefficients of the expression 
\begin{equation*}
  \sum_{r=1}^{s} \frac{1}{r!} \sum_{\begin{smallmatrix}\ell_{1}+\cdots+\ell_{r}=s, \\ \ell_{q} \ge 1, 1 \le q \le r\end{smallmatrix}} \prod_{q=1}^{r} \alpha_{j,\ell_{q}}(z),
\end{equation*}
the summand $\alpha_{j,s}(z)$ gives the main contribution and implies the following bound, which holds uniformly for $1 \le j \le n$ (with $s$ arbitrary but fixed and $n \to \infty$):
\begin{equation}\label{eqn:sumprodalphabound}
  [z^{n}] \sum_{r=1}^{s} \frac{1}{r!} \sum_{\begin{smallmatrix}\ell_{1}+\cdots+\ell_{r}=s, \\ \ell_{q} \ge 1, 1 \le q \le r\end{smallmatrix}} \prod_{q=1}^{r} \alpha_{j,\ell_{q}}(z)
  = \mathcal{O}\bigg(\frac{\tilde{T}_{j}^{s} e^{n} n^{\frac{s-3}{2}}}{e^{js}}\bigg).
\end{equation}

Now we are in a position to derive the stated bound on the $s$-th integer moments of $C_{n,j}$. First, by using \eqref{eqn:Gjszexp} and \eqref{eqn:sumprodalphabound} we get for $s \ge 1$:
\begin{align*}
  \E\big(C_{n,j}^{\underline{s}}\big) & = \frac{s!}{\tilde{T}_{n}} [z^{n}] G_{j}^{[s]}(z) = \frac{s!}{\tilde{T}_{n}} [z^{n}] T(z) \cdot \sum_{r=1}^{s} \frac{1}{r!} \sum_{\begin{smallmatrix}\ell_{1}+\cdots+\ell_{r}=s, \\ \ell_{q} \ge 1, 1 \le q \le r\end{smallmatrix}} \prod_{q=1}^{r} \alpha_{j,\ell_{q}}(z)\\
  & = \mathcal{O}\Big(\frac{s!}{\tilde{T}_{n}} \sum_{k=1}^{n-1} \tilde{T}_{k} \frac{\tilde{T}_{j}^{s} e^{n-k} (n-k)^{\frac{s-3}{2}}}{e^{js}}\Big)
  = \mathcal{O}\Big(\frac{\tilde{T}_{j}^{s} n^{\frac{3}{2}}}{e^{js}} \sum_{k=1}^{n-1} \frac{(n-k)^{\frac{s-3}{2}}}{k^{\frac{3}{2}}}\Big).
\end{align*}
Splitting the remaining sum easily gives
\begin{align}
  & \sum_{k=1}^{n-1} \frac{(n-k)^{\frac{s-3}{2}}}{k^{\frac{3}{2}}} = \sum_{k=1}^{\lfloor\frac{n}{2}\rfloor} \frac{(n-k)^{\frac{s-3}{2}}}{k^{\frac{3}{2}}} 
  + \sum_{k=\lfloor\frac{n}{2}\rfloor+1}^{n-1} \frac{(n-k)^{\frac{s-3}{2}}}{k^{\frac{3}{2}}} \notag\\
  & \quad = \mathcal{O}\big(n^{\frac{s-3}{2}}\big) + \begin{cases} \mathcal{O}\big(n^{\frac{s}{2}-2}\big), & s \ge 2, \\ \mathcal{O}\big(\frac{\log n}{n^{\frac{3}{2}}}\big), & s=1 \end{cases}
  = \mathcal{O}\big(n^{\frac{s-3}{2}}\big), \label{eqn:sumkn-k}
\end{align}
thus showing the bound (which holds uniformly for $1 \le j \le n$)
\begin{equation}\label{eqn:Cnj_bound}
  \E\big(C_{n,j}^{\underline{s}}\big) = \mathcal{O}\Big(\Big(\frac{\sqrt{n} \, \tilde{T}_{j}}{e^{j}}\Big)^{s}\Big).
\end{equation}

To give the refined asymptotic expansion of $\E\big(C_{n,j}^{\underline{s}}\big)$, yielding factorial moments of mixed Poisson type, one has to spot and evaluate the main contribution of the coefficients of \eqref{eqn:Gjszexp} in more detail and to bound the remaining contributions.
In order to do this we split 
\begin{equation*}
  \sum_{r=1}^{s} \frac{1}{r!} \sum_{\begin{smallmatrix}\ell_{1}+\cdots+\ell_{r}=s, \\ \ell_{q} \ge 1, 1 \le q \le r\end{smallmatrix}} \prod_{q=1}^{r} \alpha_{j,\ell_{q}}(z)
  = \alpha_{j,s}(z) + \underbrace{\sum_{r=2}^{s} \frac{1}{r!} \sum_{\begin{smallmatrix}\ell_{1}+\cdots+\ell_{r}=s, \\ \ell_{q} \ge 1, 1 \le q \le r\end{smallmatrix}} \prod_{q=1}^{r} \alpha_{j,\ell_{q}}(z)}_{=: Q_{j,s}(z)},
\end{equation*}
such that
\begin{equation}\label{eqn:momCnjs}
  \E\big(C_{n,j}^{\underline{s}}\big) = \frac{s!}{\tilde{T}_{n}} [z^{n}] T(z) \cdot \big(\alpha_{j,s}(z) + Q_{j,s}(z)\big).
\end{equation}
Completely analogous to the previous computations one shows for $s \ge 2$ (of course, $Q_{j,1}(z)=0$):
\begin{equation*}
  [z^{n}] Q_{j,s}(z) = \mathcal{O}\Big(\frac{\tilde{T}_{j}^{s} e^{n} n^{\frac{s}{2}-2} \log n}{e^{js}}\Big),
\end{equation*}
and furthermore (uniformly for $1 \le j \le n$) the following bound for the contribution of the remainder:
\begin{equation}\label{eqn:CnjsRemBound}
  \frac{s!}{\tilde{T}_{n}} [z^{n}] T(z) \cdot Q_{j,s}(z) = \mathcal{O}\Big(\Big(\frac{\sqrt{n} \, \tilde{T}_{j}}{e^{j}}\Big)^{s} \cdot \frac{\log n}{\sqrt{n}}\Big).
\end{equation}

Now we consider the term in \eqref{eqn:Cnj_bound} yielding the main contribution, where we assume from now on that $j=o(n)$. We get
\begin{equation}\label{eqn:Cnjmain}
  [z^{n}] T(z) \alpha_{j,s}(z) = \sum_{k=1}^{n-sj} \tilde{T}_{k} \tilde{T}_{j}^{s} \frac{1}{n-k} [z^{n-k-sj}] \frac{1}{(1-T(z))^{s+1}}.
\end{equation}
We split the summation interval of \eqref{eqn:Cnjmain} at $k=\lfloor\frac{n}{2}\rfloor$ and consider the contributions separately.
Additionally, we only require the already computed asymptotic bounds \eqref{eqn:znTz} and \eqref{eqn:sumkn-k}.
The first part yields
\begin{align*}
  & \sum_{k=1}^{\lfloor\frac{n}{2}\rfloor} \tilde{T}_{k} \tilde{T}_{j}^{s} \frac{1}{n-k} [z^{n-k-sj}] \frac{1}{(1-T(z))^{s+1}} \\
  & \quad = \tilde{T}_{j}^{s} \sum_{k=1}^{\lfloor\frac{n}{2}\rfloor} \tilde{T}_{k} \frac{1}{n-k} \frac{e^{n-k-sj} (n-k-sj)^{\frac{s-1}{2}}}{2^{\frac{s+1}{2}} \big(\frac{s-1}{2}\big)!} \cdot \Big(1+\mathcal{O}\Big(\frac{1}{\sqrt{n-k-sj}}\Big)\Big)\\
  & \quad = \frac{\tilde{T}_{j}^{s} e^{n} n^{\frac{s-3}{2}}}{e^{js} \, 2^{\frac{s+1}{2}} \big(\frac{s-1}{2}\big)!} \sum_{k=1}^{\lfloor\frac{n}{2}\rfloor} \frac{\tilde{T}_{k}}{e^{k}} \cdot \Big(1 + \mathcal{O}\big(\frac{k}{n}\big) + \mathcal{O}\big(\frac{j}{n}\big)
 + \mathcal{O}\big(\frac{1}{\sqrt{n}}\big)\Big).
\end{align*}
The main contribution comes from
\begin{equation*}
  \sum_{k=1}^{\lfloor\frac{n}{2}\rfloor} \frac{\tilde{T}_{k}}{e^{k}} = \sum_{k=1}^{\infty} \frac{\tilde{T}_{k}}{e^{k}} - \sum_{k=\lfloor\frac{n}{2}\rfloor+1}^{\infty} \frac{\tilde{T}_{k}}{e^{k}} =
  T(e^{-1}) + \mathcal{O}\Big(\sum_{k=\lfloor\frac{n}{2}\rfloor+1}^{\infty} \frac{1}{k^{\frac{3}{2}}}\Big)
  = 1 + \mathcal{O}\big(\frac{1}{\sqrt{n}}\big),
\end{equation*}
whereas
\begin{equation*}
  \mathcal{O}\Big(\sum_{k=1}^{\lfloor\frac{n}{2}\rfloor} \frac{\tilde{T}_{k}}{e^{k}} \frac{k}{n}\Big)
  = \mathcal{O}\Big(\frac{1}{n} \sum_{k=1}^{\lfloor\frac{n}{2}\rfloor}\frac{1}{\sqrt{k}}\Big)
  = \mathcal{O}\big(\frac{1}{\sqrt{n}}\big).
\end{equation*}
Thus
\begin{equation}\label{eqn:mainfirstpart}
  \sum_{k=1}^{\lfloor\frac{n}{2}\rfloor} \tilde{T}_{k} \tilde{T}_{j}^{s} \frac{1}{n-k} 
  [z^{n-k-sj}] \frac{1}{(1-T(z))^{s+1}}
  = \frac{\tilde{T}_{j}^{s} e^{n} n^{\frac{s-3}{2}}}{e^{js} \, 2^{\frac{s+1}{2}} \big(\frac{s-1}{2}\big)!} \cdot \Big(1+\mathcal{O}\big(\frac{j}{n}\big)+\mathcal{O}\big(\frac{1}{\sqrt{n}}\big)\Big).
\end{equation}
The second part yields
\begin{align}
  & \sum_{k=\lfloor\frac{n}{2}\rfloor+1}^{n-sj} \tilde{T}_{k} \tilde{T}_{j}^{s} \frac{1}{n-k} [z^{n-k-sj}] \frac{1}{(1-T(z))^{s+1}}\notag\\
  & \quad = \mathcal{O}\Big(\tilde{T}_{j}^{s} \sum_{k=\lfloor\frac{n}{2}\rfloor+1}^{n-sj} \frac{e^{k} e^{n-k-sj} (n-k-sj)^{\frac{s-1}{2}}}{k^{\frac{3}{2}} (n-k)}\Big)
  = \mathcal{O}\Big(\frac{\tilde{T}_{j}^{s} e^{n}}{e^{js} \, n^{\frac{3}{2}}} \sum_{k=sj}^{\lceil\frac{n}{2}\rceil-1} \frac{(k-sj)^{\frac{s-1}{2}}}{k}\Big)\notag\\
  & \quad = \mathcal{O}\Big(\frac{\tilde{T}_{j}^{s} e^{n}}{e^{js} \, n^{\frac{3}{2}}} \sum_{k=sj}^{\lceil\frac{n}{2}\rceil-1} k^{\frac{s-3}{2}} \Big) 
  = \begin{cases} \mathcal{O}\Big(\frac{\tilde{T}_{j}^{s} e^{n} n^{\frac{s}{2}-2}}{e^{js}}\Big), & \enspace \text{for $s \ge 2$},\\ \mathcal{O}\Big(\frac{\tilde{T}_{j} e^{n} \log n}{e^{j} n^{\frac{3}{2}}}\Big), & 
  \enspace \text{for $s=1$}.
  \end{cases}\label{eqn:mainsecondpart}
\end{align}
Starting with \eqref{eqn:Cnjmain} and combining the contributions \eqref{eqn:mainfirstpart} and \eqref{eqn:mainsecondpart} finally yields
\begin{equation}\label{eqn:mainasympt}
  [z^{n}] T(z) \alpha_{j,s}(z) = \Big(\frac{\sqrt{n} \, \tilde{T}_{j}}{e^{j}}\Big)^{s} \frac{e^{n}}{2^{\frac{s+1}{2}} \big(\frac{s-1}{2}\big)! \, n^{\frac{3}{2}}} \cdot
  \Big(1+\mathcal{O}\big(\frac{j}{n}\big) + \mathcal{O}\Big(\frac{(\log n)^{\delta_{s,1}}}{\sqrt{n}}\Big)\Big),
\end{equation}
with $\delta_{s,i}$ the Kronecker-delta function. Together with Stirling's formula for the Gamma function we thus obtain from \eqref{eqn:momCnjs}, \eqref{eqn:CnjsRemBound} and \eqref{eqn:mainasympt}
\begin{equation*}
  \E\big(C_{n,j}^{\underline{s}}\big) = \frac{s!}{\tilde{T}_{n}} [z^{n}] G_{j}^{[s]}(z)
  = \frac{\sqrt{2\pi} s!}{2^{\frac{s+1}{2}} \big(\frac{s-1}{2}\big)!} \cdot \Big(\frac{\sqrt{n} \, \tilde{T}_{j}}{e^{j}}\Big)^{s} \cdot \Big(1+\mathcal{O}\big(\frac{j}{n}\big)+\mathcal{O}\Big(\frac{\log n}{\sqrt{n}}\Big)\Big).
\end{equation*}
Finally, applying the duplication formula of the factorials \eqref{eqn:duplication}, we get the stated result for the asymptotic expansion of the factorial moments:
\begin{equation}\label{eqn:Cnj_moments}
    \E\big(C_{n,j}^{\underline{s}}\big) = \Big(\frac{\sqrt{n} \tilde{T}_{j}}{e^{j}}\Big)^{s} \, 2^{\frac{s}{2}} \, \Gamma\big(\frac{s}{2}+1\big) \cdot \big(1+o(1)\big), \quad \text{for $j=o(n)$}.
\end{equation}
The mixed Poisson limit law with Rayleigh mixing distribution as stated in Theorem~\ref{the:Edgecut} follows then from \eqref{eqn:Cnj_bound} and \eqref{eqn:Cnj_moments} by applying Lemma~\ref{MOMSEQMainLemma}.
\end{proof}

\subsection{Parking functions and growth of the initial cluster\label{Ssec:Parking}}
Parking functions are objects introduced by Konheim and Weiss \cite{KonWei1966}, which are of interest in combinatorics (e.g., due to connections to various other combinatorial structures as forests, acyclic functions or hyperplane arrangements), see, e.g., \cite{Stanley}, and computer science (e.g., due to close relations to hashing variants), see, e.g., \cite{Knuth1998}.
A vivid description of parking functions is as follows: consider a one-way street with $n$ parking spaces numbered from $1$ to $n$ and a sequence of $n$ drivers (we will here exclusively deal with the case that the number of parking spaces is equal to the number of drivers) with preferred parking spaces $s_{1}, s_{2}, \dots, s_{n}$. The drivers arrive sequentially and driver $k$ tries to park at its preferred parking space $s_{k}$. If it is free he parks, otherwise he moves further in the allowed direction (thus examining parking spaces $s_{k}+1, s_{k}+2, \dots$) until he finds a free parking space, where he parks; if there is no such parking space he leaves the street without parking. A parking function is then a sequence $(s_{1}, \dots, s_{n}) \in \{1, \dots, n\}^{n}$ such that all drivers are able to park.
It has been shown already in \cite{KonWei1966} that there are exactly $P_{n} = (n+1)^{n-1}$ parking functions of size $n$.
Figure~\ref{fig:ParkingFunction} gives an example of a parking function.
\begin{figure}
\begin{center}
  \includegraphics[height=2.5cm]{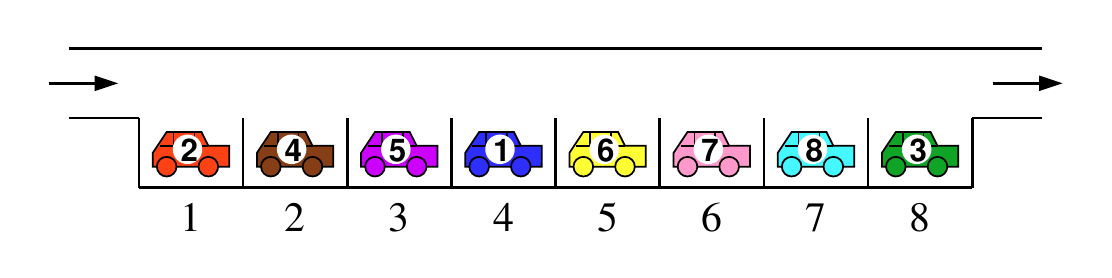}
\end{center}
\caption{The parking function $(4,1,8,1,3,4,3,1)$ with the respective parking positions when carrying out the parking procedure.\label{fig:ParkingFunction}}
\end{figure}

We note that there are many alternative ways of defining parking functions $(s_{1}, \dots, s_{n}) \in \{1, \dots, n\}^{n}$, e.g., via the characterization $|\{j: s_{j} \le k\}| \ge k$, for all $1 \le k \le n$; however, in what follows the description given above is more intuitive and seems to be advantageous. Namely, we may start with a parking function $(s_{1}, \dots, s_{n})$ and consider the filling of the parking spaces during the \emph{parking procedure}, where at the beginning (step $0$) we have an empty street, and where in step $k$ the $k$-th driver arrives and successfully parks, until (after step $n$) eventually all parking spaces are occupied. When carrying out the parking procedure, at each time step we may define the initial cluster as the maximum sequence of consecutive occupied parking spaces starting with parking space $1$; if parking space $1$ is empty we say that the initial cluster is empty. The size of the initial cluster is then simply the number of consecutive occupied parking spaces containing parking space $1$. In this section we are interested in the growth of the initial cluster during the parking procedure starting with a random parking function: let the r.v.\ $X_{n}$ denote the number of increments of the initial cluster and the refinement $X_{n,j}$ measure the number of increments of amount $j$ of the initial cluster during the parking procedure of a random parking function of size $n$; of course $X_{n} = \sum_{j=1}^{n-1} X_{n,j}$. It turns out that the r.v.\ $X_{n}$ and $X_{n,j}$ are closely related to $C_{n}$ and $C_{n,j}$, respectively, studied in Section~\ref{Ssec:Cutting} during the analysis of the edge-cutting procedure of Cayley-trees.
Figure~\ref{fig:ParkingProcedure} illustrates the parking procedure and the growth of the initial cluster.
\begin{figure}
\begin{flushleft}
  \includegraphics[height=1.3cm]{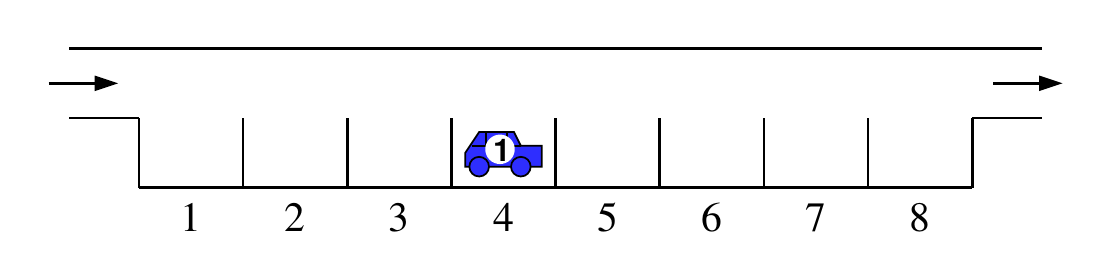} \enspace \text{\raisebox{0.6cm}{\Large{$\Rightarrow$}}} \enspace  
  \includegraphics[height=1.3cm]{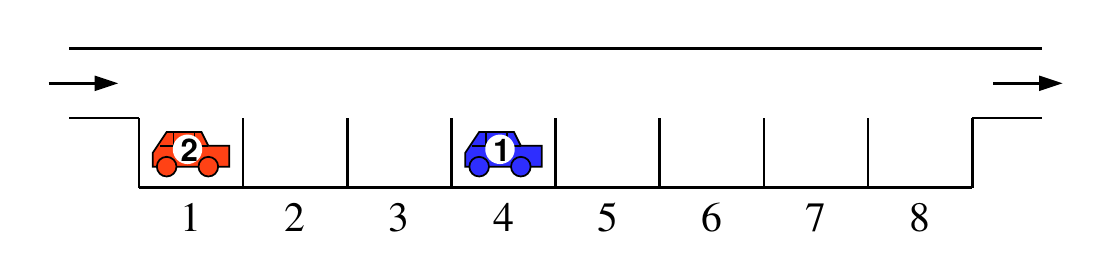} \enspace \text{\raisebox{0.6cm}{\Large{$\Rightarrow$}}} \enspace
  \includegraphics[height=1.3cm]{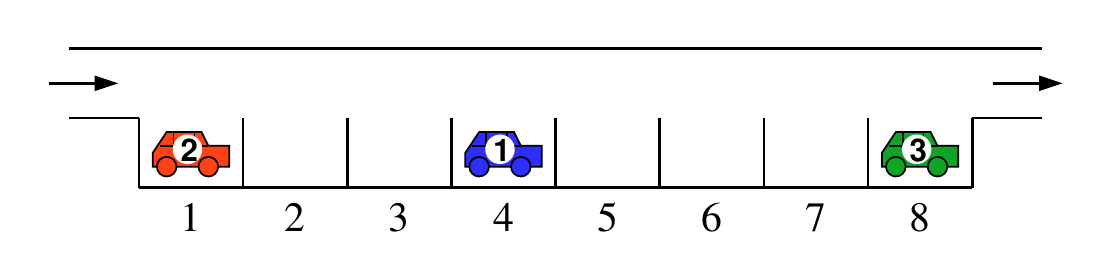} \enspace \text{\raisebox{0.6cm}{\Large{$\Rightarrow$}}} \enspace
  \includegraphics[height=1.3cm]{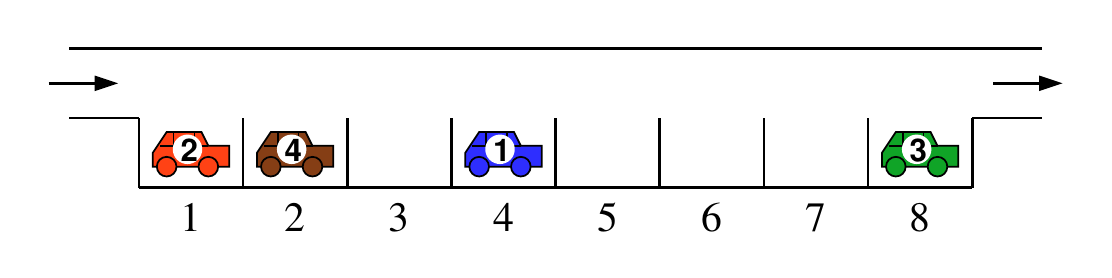} \enspace \text{\raisebox{0.6cm}{\Large{$\Rightarrow$}}} \enspace
  \includegraphics[height=1.3cm]{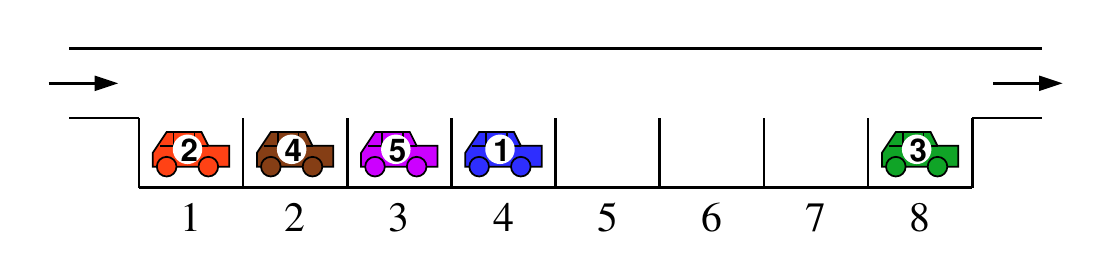} \enspace \text{\raisebox{0.6cm}{\Large{$\Rightarrow$}}} \enspace
  \includegraphics[height=1.3cm]{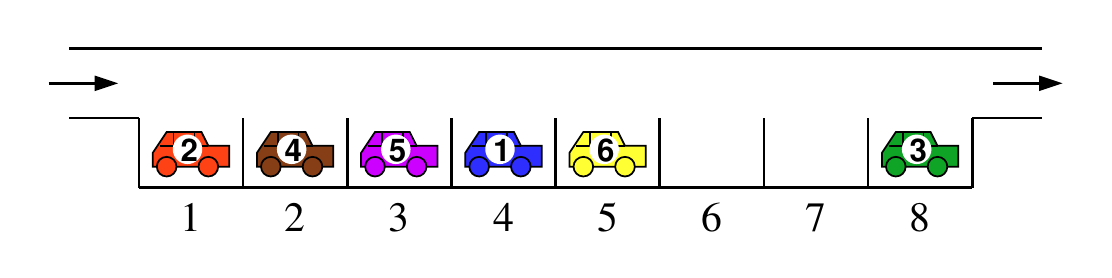} \enspace \text{\raisebox{0.6cm}{\Large{$\Rightarrow$}}} \enspace
  \includegraphics[height=1.3cm]{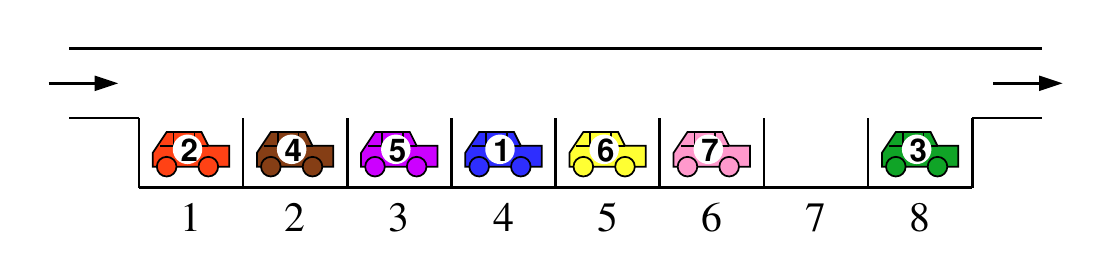} \enspace \text{\raisebox{0.6cm}{\Large{$\Rightarrow$}}} \enspace
  \includegraphics[height=1.3cm]{stell14pic8a.pdf}
\end{flushleft}
\caption{The parking procedure for the parking function $(4,1,8,1,3,4,3,1)$. There are $6$ increments of the initial cluster, $4$ increments of amount one, and $2$ increments of amount two.\label{fig:ParkingProcedure}}
\end{figure}

It is well-known \cite{Stanley} that the number $P_{n}$ of parking functions of size $n$ coincides with the number of rooted labelled forests, i.e., forests of rooted labelled unordered trees, of size $n$. There are various known bijections between these objects; however, it seems that the following bijective relation between the growth of the initial cluster during the parking procedure and records in forests of rooted labelled trees has not been observed earlier. We mention that a similar bijection has been used by Chassaing and Louchard in \cite{ChaLou2002}, where they related the growth of clusters in parking functions with the additive coalescent model for particle coagulation.
Analogous to the definition in Section~\ref{Ssec:Records} a max-record in a forest $F$ of labelled trees is a node $x \in F$, which has the largest label amongst all nodes on the unique path from the root-node of the tree component containing $x$ to $x$.
\begin{prop}\label{Prop:bijparkingseq}
There is a bijection, which maps parking functions of size $n$ to forests of rooted labelled unordered trees of size $n$, such that the number of increments of the initial cluster during the parking procedure of a parking function corresponds to the number of max-records in the forest. Moreover, the number of increments of amount $j$ of the initial cluster during the parking procedure corresponds to the number of max-record subtrees of size $j$ in the forests.
\end{prop}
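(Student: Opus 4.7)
The plan is to exhibit a bijection $\Phi$ from parking functions of size $n$ to rooted labelled unordered forests on $[n]$ and then to verify the two claimed statistic correspondences. Given a parking function $s=(s_1,\ldots,s_n)$ with parking positions $\pi(k)$, I will define $F_s=\Phi(s)$ as the forest on $[n]$ in which every car $k$ with $s_k=1$ is a root and every car $k$ with $s_k\ge 2$ has as parent the unique car parked at position $s_k-1$. Because $\pi(\mathrm{parent}(k))=s_k-1<s_k\le\pi(k)$, parking positions strictly decrease along ancestor chains, so $F_s$ is acyclic. Both sides have cardinality $(n+1)^{n-1}$, so it is enough to prove injectivity of $\Phi$, which I will do by induction on $n$, or equivalently by an explicit greedy inversion that simulates the parking procedure in car-label order while reading preferred positions off the parent pointers as they become determined.

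To match the statistics, I start from the dynamical characterization: car $k$ triggers an increment iff $\pi(k)=m+1$, where $m$ is the initial cluster size just before $k$ arrives, equivalently iff $s_k\le m+1$. I claim this is equivalent to $k$ being a max-record in $F_s$. If $s_k\le m+1$ then the parent position $s_k-1$ lies in $[1,m]$, so the parent has already parked; iterating the identity $\pi(p^{j+1}(k))=s_{p^j(k)}-1\le\pi(p^j(k))-1$, I will show the whole ancestor chain lies in $[1,m]$, so every ancestor parked earlier and has smaller label, whence $k$ is a max-record. Conversely, if $k$ does not trigger then $s_k-1>m$, and either position $s_k-1$ is empty when $k$ arrives (in which case the parent parks later and exceeds $k$), or $s_k-1$ lies in a non-initial cluster $[a,b]$ with $a>1$ and $a-1$ empty at time $k$. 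In the latter case the ancestor chain starts in $[a,b]$ and must strictly decrease; the key observation is that if any $s_{p^j(k)}<a$ then $p^j(k)$ walked through position $a-1$ when parking, which would force $a-1$ to have been occupied already, a contradiction. Therefore the ancestor chain exits $[a,b]$ only through the unique (later-arriving, hence larger) car parked at $a-1$, exhibiting an ancestor of $k$ with label exceeding $k$.

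Finally, I will identify the absorbed block $[m_0+1,m_1]$ with the max-record subtree rooted at $k$. The same ``position $a-1$ would have blocked'' argument shows that for every car $c_p$ parked at $p\in[m_0+2,m_1]$ the parent of $c_p$ is also parked in $[m_0+1,p-1]$, so the absorbed set is closed under parents and forms a subtree of $F_s$ rooted at $k$; all its non-root members have labels strictly smaller than $k$ and hence contribute no further max-records. For the reverse inclusion, any descendant of $k$ with label $\le k$ parks at a position strictly greater than $\pi(k)=m_0+1$ (parents park to the left), and a symmetric argument at the gap $m_1+1$ (empty at time $k$ and therefore never crossed by any ancestor chain) forces the parking position to lie in $[m_0+2,m_1]$. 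The main technical obstacle is threading these two ``gap'' arguments cleanly together; once the max-record/trigger equivalence is in place the subtree-size identification follows by bookkeeping.
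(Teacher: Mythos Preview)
Your bijection is not the one the paper uses, and the difference matters for the injectivity argument. In the paper's construction the forest is built step by step during the parking procedure: when car $k$ parks at position $s$, the trees corresponding to the cluster immediately to the \emph{right} of $s$ become subtrees of the new node $k$, and if the preferred space $s_k$ was already occupied by car $\ell$ then $k$ is additionally attached as a child of $\ell$. For instance, for $s=(3,1,1)$ the paper produces the single path $2\text{--}3\text{--}1$ (root $2$), whereas your rule ``parent of $k$ is the car at position $s_k-1$'' produces the two-tree forest $\{3\text{--}1\},\{2\}$. Both forests carry the same max-record statistics here, and your ``gap at $a-1$'' and ``gap at $m_1+1$'' arguments for the statistic correspondence are sound, so that part of the plan goes through.

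The genuine gap is in your injectivity argument. You propose to invert $\Phi$ by ``simulating the parking procedure in car-label order while reading preferred positions off the parent pointers as they become determined.'' But in your forest $\mathrm{parent}(k)$ can have label larger than $k$: take $s=(1,3,1,2)$, where $\pi=(1,3,2,4)$ and your forest is $\{1\text{--}4\},\{3\text{--}2\}$. At step $2$ you need $s_2=\pi(3)+1$, yet car $3$ has not parked and $\pi(3)$ is undetermined; the simulation stalls. The same obstruction blocks a naive induction on $n$ via removing node $n$, since $n$ need not be a leaf in your forest (e.g.\ $s=(1,3,1)$ gives the tree with root $3$ and child $2$). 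A correct inversion for your map processes \emph{positions} rather than car labels: maintain a waiting list initialised with the roots; at position $p$ park the car of smallest label currently in the list, then insert its children. You would then have to check that this never empties prematurely, yields a parking function, and indeed inverts $\Phi$. Alternatively you could adopt the paper's iterative construction, whose invertibility is immediate because each step is manifestly reversible and the cluster--subforest correspondence is maintained as an invariant, which also makes the max-record claim fall out with essentially no extra work.
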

\begin{proof}
Given a parking function $(s_{1}, \dots, s_{n}) \in \{1, \dots, n\}^{n}$ of size $n$ we describe the mapping, i.e., the construction of the corresponding rooted labelled forest $F$ of size $n$, in an iterative way, which reflects the parking procedure of the $n$ drivers. In order to describe the construction we assume that after step $k$ the first $k$ drivers are parked; then the ``parking street'' consists of a set of clusters of parking spaces (a cluster is here a maximal sequence of consecutive occupied parking spaces) separated by empty parking spaces. In the construction the $k$-th driver of the parking function will correspond to the node labelled $k$ in the forest and after step $k$ we will obtain a rooted labelled forest $F^{(k)}$ of size $k$ (with nodes labelled by $\{1, \dots, k\}$). Moreover, the forest $F^{(k)}$ has the property, that each cluster of parking spaces occurring in the parking procedure after step $k$ corresponds to a subset of rooted labelled trees in $F^{(k)}$.
It follows the description of the construction of the forest $F := F^{(n)}$:
\begin{description}
\item[Step $0$] We start with the empty forest $F^{(0)} = \emptyset$.
\item[Step $k$] According to the parking of driver $k$ we distinguish between two cases.
\begin{itemize}
\item Driver $k$ parks at his preferred parking space $s := s_{k}$: let us consider the parking procedure after Step~$(k-1)$ and the cluster of parking spaces starting with parking space $s+1$ (i.e., the cluster of parking spaces right to the parking space of driver $k$); if parking space $s+1$ is empty the cluster is $\emptyset$. According to the construction this cluster corresponds to a subset $G$ of trees of the forest $F^{(k-1)}$. Let $T$ be the tree rooted at the new node labelled $k$ with $G$ its subtrees. Then the forest $F^{(k)}$ is defined by $F^{(k)} := (F^{(k-1)} \setminus G) \cup T$.
\item Driver $k$ cannot park at his preferred parking space $s_{k}$, since it is occupied by the $\ell$-th driver ($\ell < k$), but parks at the first empty space $s > s_{k}$: let us consider the parking procedure after Step~$(k-1)$ and the cluster of parking spaces starting with parking space $s+1$ (i.e., the cluster of parking spaces right to the parking space of driver $k$); if parking space $s+1$ is empty the cluster is $\emptyset$. According to the construction this cluster corresponds to a subset $G$ of trees of the forest $F^{(k-1)}$. Furthermore, let $T'$ be the tree of the forest $F^{(k-1)}$ containing label $\ell$ (by construction $T' \not\in G$). Then, construct the rooted tree $T$ by letting $G$ be the subtrees of the (new) node labelled $k$ and attaching node $k$ to node $\ell \in T'$. Then the forest $F^{(k)}$ is defined by $F^{(k)} := (F^{(k-1)} \setminus(G \cup T')) \cup T$.
\end{itemize}
\end{description}
Figure~\ref{fig:ParkingBijectionScheme} illustrates both cases of the bijection.
\begin{figure}
\begin{center}
  \includegraphics[width=5.0cm]{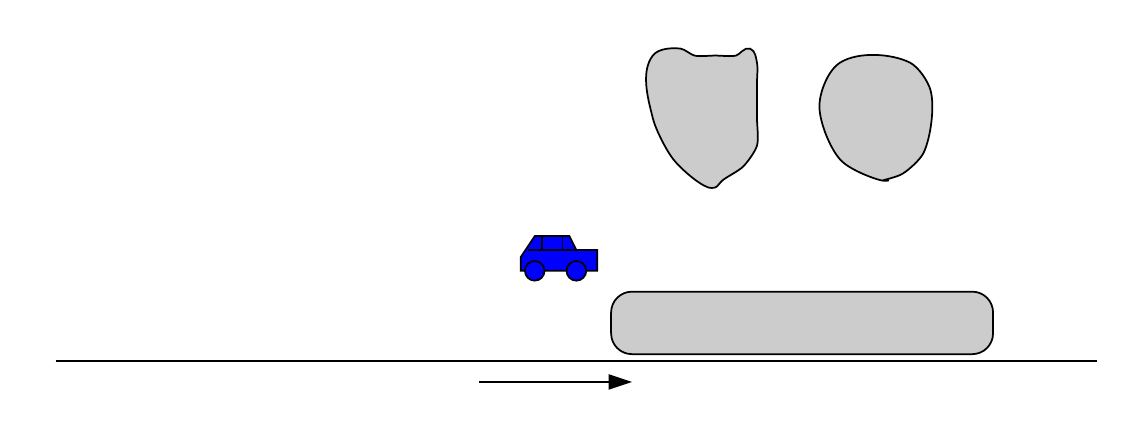} \enspace \text{\raisebox{10mm}{\Large{$\Rightarrow$}}} \enspace \includegraphics[width=5.0cm]{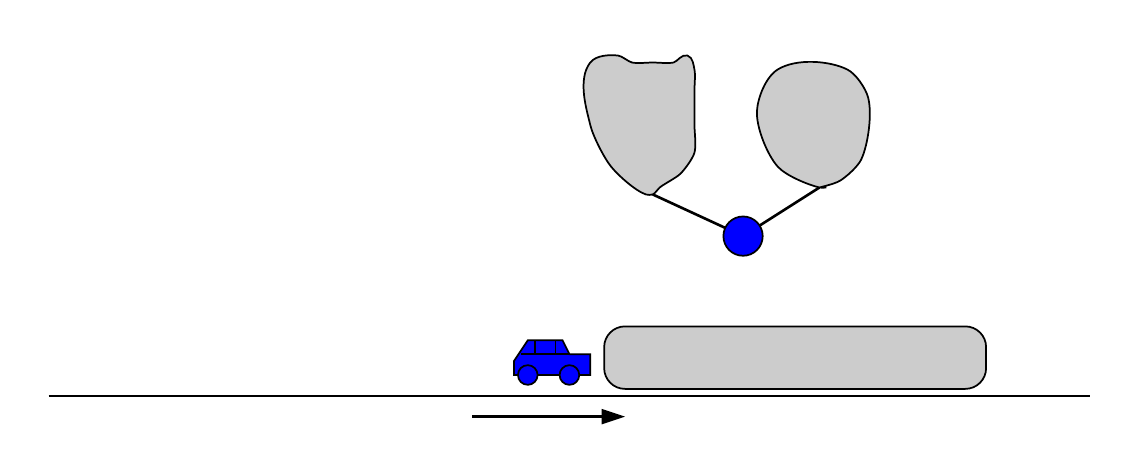}\\[5mm]
  \includegraphics[width=5.0cm]{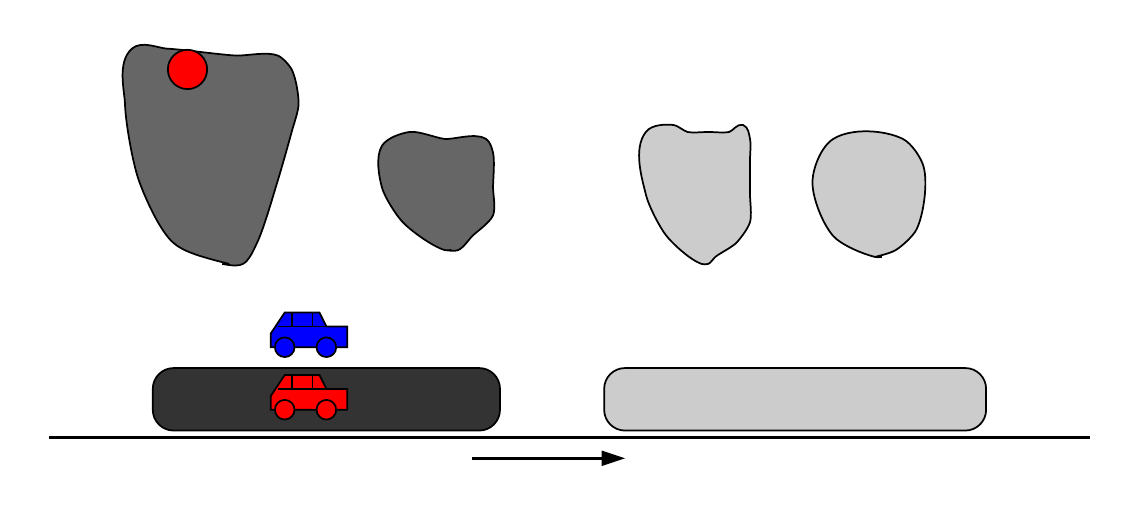} \enspace \text{\raisebox{10mm}{\Large{$\Rightarrow$}}} \enspace \includegraphics[width=5.0cm]{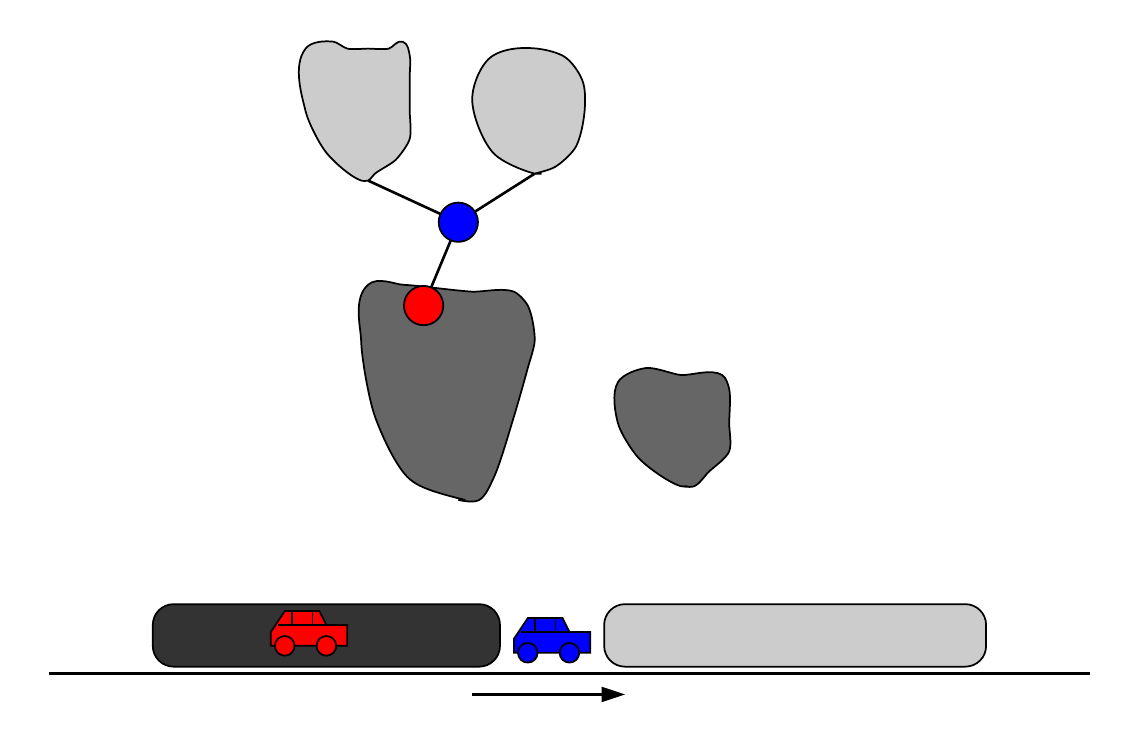}
\end{center}

\vspace*{-2mm}

\caption{Constructing a forest of labelled rooted trees from a parking function during the parking procedure illustrating the cases, where the preferred parking space of a driver is free (first picture) or occupied (second picture).\label{fig:ParkingBijectionScheme}}
\end{figure}

It follows from the construction that node $k$ is a max-record in the forest $F$ iff the first $k-1$ drivers occupy all parking spaces left to the parking space $s$, where the $k$-th driver has parked, which is equivalent to the event that Step~$k$ was an increment of the initial cluster. Moreover, in this case the subtree rooted at node $k$ in $F^{(k)}$ corresponds to a record subtree in the forest $F$, whose size corresponds then to the amount of increment of the initial cluster.
It is not difficult to see that this construction is indeed a bijection from the set of parking functions of size $n$ to the set of rooted labelled forests of size $n$, but we omit here to state the inverse mapping (which could be formulated also in an iterative way).
\end{proof}
In Figure~\ref{fig:ParkingBijection} we give a parking function and the corresponding forest of labelled rooted trees under this bijection.
\begin{figure}
\begin{center}
  \includegraphics[height=2.0cm]{stell14pic8a.pdf}\\
  \hspace*{0.1cm} \Large{$\Updownarrow$} \hspace*{3.5cm} \Large{$\Updownarrow$} \hspace*{1.8cm}\\
  \includegraphics[height=2.8cm]{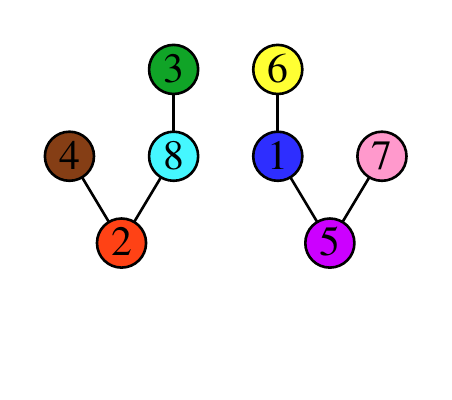} \enspace \raisebox{1.1cm}{\Large{$\Leftrightarrow$}} \includegraphics[height=2.8cm]{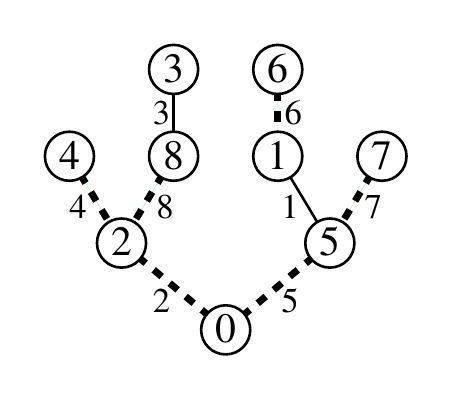}
\end{center}

\vspace*{-5mm}

\caption{The parking function $(4,1,8,1,3,4,3,1)$ and the forest of labelled rooted trees obtained by the mapping described in the proof of Proposition~\ref{Prop:bijparkingseq} as well as the edge-labelled rooted tree described in the proof of Theorem~\ref{the:ParkingSeq}.
The increments of the initial cluster of the parking function correspond to the max-records in the respective forest as well as to the cuts in the respective edge-labelled rooted tree (visualized as dotted lines).\label{fig:ParkingBijection}}
\end{figure}

Proposition~\ref{Prop:bijparkingseq} yields thus a coupling between records in forests of rooted labelled unordered trees and increments in parking functions. This coupling can be extended easily to one between increments in parking functions and edge-cuts to isolate the root-node in Cayley-trees.
\begin{theorem}\label{the:ParkingSeq}
The random variable $X_{n}$, counting the number of increments of the initial cluster in a random parking function of size $n$, is equally distributed as the random variable $C_{n}$, counting the number of edge-cuts to isolate the root-node in a random Cayley-tree of size $n$, i.e., $X_{n} \law C_{n}$. Moreover, the number of increments of amount $j$ in a random parking function of size $n$ is equally distributed as the number of subtrees of size $j$ cut-off during the edge-cutting procedure when starting with a random size-$n$ Cayley-tree, i.e., $X_{n,j} \law C_{n,j}$, $1 \le j \le n-1$.

After suitable normalization $X_{n}$ is asymptotically, for $n \to \infty$, Rayleigh distributed with parameter $1$:
\begin{equation*}
  \frac{X_{n}}{\sqrt{n}} \claw \text{Rayleigh}(1).
\end{equation*}
$X_{n,j}$ has, for $n\to\infty$ and arbitrary $1 \le j=j(n) \le n-1$, 
asymptotically factorial moments of mixed Poisson type with a Rayleigh mixing distribution $X$ and scale parameter $\lambda_{n,j} = \frac{\sqrt{n} j^{j-1}}{j! e^{j}}$:
\begin{equation*}
\E(\fallfak{X_{n,j}}s) = \lambda_{n,j}^{s} \, 2^{\frac{s}{2}} \, \Gamma\big(\frac{s}{2}+1\big) \cdot \big(1+o(1)\big), \enspace \text{for $j=o(n)$}, \quad
\E(\fallfak{X_{n,j}}s) = \mathcal{O}\big(\lambda_{n,j}^{s}\big).
\end{equation*}
\begin{itemize}
\item[(i)] for $\lambda_{n,j}\to\infty$, the random variable $\frac{X_{n,j}}{\lambda_{n,j}}$ converges in distribution, with convergence of all moments, to $X$. 

\item[(ii)] for $\lambda_{n,j}\to\rho\in[0,\infty)$, the random variable $X_{n,j}$ converges in distribution, with convergence of all moments, to a mixed Poisson distributed random variable $Y\law \MPo(\rho X)$. 
\end{itemize}
\end{theorem}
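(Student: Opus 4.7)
The plan is to couple the parking procedure with a record/cutting process on a random Cayley-tree via the bijection of Proposition~\ref{Prop:bijparkingseq} and then transfer the mixed-Poisson asymptotics from Theorem~\ref{the:Records} (equivalently Theorem~\ref{the:Edgecut}) through this coupling. First I would adjoin a virtual root $0$ above the forest produced by Proposition~\ref{Prop:bijparkingseq} and label each edge of the resulting tree by the index of the driver whose placement created it, labelling each edge incident to $0$ by its non-root endpoint. This yields the edge-labelled rooted tree alluded to in Figure~\ref{fig:ParkingBijection}. The counting identity $(n+1)^{n-1}=|\{\text{parking functions of size }n\}|=|\{\text{rooted trees on }\{0,\dots,n\}\text{ with root }0\}|$, together with an inverse that processes the drivers in order $1,2,\dots,n$ and reads off each preferred spot from the local configuration of already-parked cars, shows that the map is a measure-preserving bijection with uniform pushforward.

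Under this bijection the max-records in the forest, which are exactly the drivers producing increments of the initial cluster with the size of the max-record subtree equal to the amount of the increment, correspond to the non-root nodes of the tree whose label is maximal on the path from $0$ to the node; after the relabelling $k\mapsto n+1-k$ they become the min-records in a uniformly random Cayley-tree of size $n+1$ in the sense of Section~\ref{Ssec:Records}. Theorem~\ref{the:Records} applied to $R_{n+1,j}$ then yields the mixed-Poisson factorial-moment expansion for $X_{n,j}$ with scale parameter $\lambda_{n+1,j}\sim\lambda_{n,j}$, and Lemma~\ref{MOMSEQMainLemma} produces parts (i) and (ii). The Rayleigh limit $X_n/\sqrt n\claw\text{Rayleigh}(1)$ follows either by summation or by transferring Janson's classical result across the coupling. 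The distributional identities $X_n\law C_n$ and $X_{n,j}\law C_{n,j}$ stated in the theorem are inherited from the agreement, established in Sections~\ref{Ssec:Records}--\ref{Ssec:Cutting}, of the limiting laws of record subtrees and edge-cut subtrees in random Cayley-trees, with the inconsequential index shift $n\mapsto n+1$ coming from the virtual root being asymptotically invisible in the scaling $\lambda_{n,j}$.

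The principal obstacle is the step-by-step verification of the coupling: with a case analysis handling the two rules of Proposition~\ref{Prop:bijparkingseq}, one must confirm that the extended construction is indeed a bijection onto the claimed set, that its pushforward is uniform, and above all that the size of the max-record subtree rooted at driver $k$ in the forest coincides with the number of consecutive occupied slots freshly merged into the initial cluster when driver $k$ parks. Once these combinatorial identifications are secured, the remainder of the theorem is a soft transfer of results already established earlier in the paper.
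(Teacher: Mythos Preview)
Your outline has two genuine gaps, one at each of the two conclusions of the theorem.

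\textbf{The exact identities $X_{n}\law C_{n}$ and $X_{n,j}\law C_{n,j}$.} You write that these ``are inherited from the agreement, established in Sections~\ref{Ssec:Records}--\ref{Ssec:Cutting}, of the limiting laws of record subtrees and edge-cut subtrees.'' But those sections only prove that $R_{n,j}$ and $C_{n,j}$ have the same \emph{asymptotic} behaviour (see the Remark after Theorem~\ref{the:Edgecut}); they do not establish $R_{n,j}\law C_{n,j}$ for finite~$n$, and in fact these r.v.\ are not equal in distribution. An exact distributional identity cannot be deduced from coincidence of limit laws. The paper obtains the exact statement by a direct combinatorial step you bypass: it labels each edge of the tree $T$ (the forest with virtual root $0$ adjoined) by the label of its child endpoint, and observes that cutting edges of $T$ in decreasing order of edge-label recovers precisely the max-records of the forest, with record-subtree sizes equal to the sizes of the cut-off branches. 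Because the edge-cutting quantity $C_{\,\cdot\,,j}$ depends only on the unlabelled rooted shape of the tree, conditioning the root to carry the label $0$ is harmless here, and one gets an exact equality in law with the edge-cutting variables (on a tree of size $n+1$; the paper then identifies this with $C_{n,j}$).

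\textbf{The transfer to Theorem~\ref{the:Records} via $k\mapsto n+1-k$.} Even for the asymptotic part your route through min-records is not quite sound as stated. After the relabelling, the root carries the maximal label $n+1$. The object you obtain is therefore a uniformly random rooted labelled tree on $\{1,\dots,n+1\}$ \emph{conditioned on the root being the largest label}, not a uniformly random Cayley-tree of size $n+1$. Since the notion of min-record depends on labels along the root path, this conditioning changes the distribution of $R_{n+1,j}$ (already for $n+1=2$ one checks $\E R_{2}=\tfrac{3}{2}$ unconditionally but $R_{2}=2$ when the root is the larger node). Hence Theorem~\ref{the:Records} does not apply directly. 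The paper avoids this by using the edge-cutting coupling of Section~\ref{Ssec:Cutting}, whose statistics are label-independent, so fixing the root label is innocuous; the asymptotics then come from Theorem~\ref{the:Edgecut}, not Theorem~\ref{the:Records}.
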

\begin{proof}
Starting with a labelled forest $F$ of size $n$ we get a rooted labelled tree $T$ of size $n+1$ by attaching the trees in $F$ as subtrees of the root-node labelled $0$. Next we label the edges of $T$ by labels $\{1, \dots, n\}$, where each edge $e=(x,y)$, with $x$ the parent of $y$, gets the label of the child $y$. When applying the edge-cutting procedure to the edge-labelled tree $T$ in a way that at each step the edge with largest label is chosen and cut-off, each max-record of the original forest $F$ corresponds to a cut in $T$ and furthermore a record-subtree of size $j$ in $F$ corresponds in $T$ to a cut-off of a branch of size $j$. Together with Proposition~\ref{Prop:bijparkingseq} this yields
\begin{equation*}
  X_{n} \law C_{n} \quad \text{and} \quad X_{n,j} \law C_{n,j}, \enspace 1 \le j \le n-1.
\end{equation*}
The limiting distribution results for $X_{n}$ and $X_{n,j}$ follow thus from the corresponding results for $C_{n}$ and $C_{n,j}$ given in \cite{Janson2006b,Pan2006} and Theorem~\ref{the:Edgecut}, respectively.
\end{proof}

\subsection{Zero contacts in bridges\label{Ssec:Dyck}}
We consider directed lattice paths from left to right starting at $(0,0)$ and ending at $(2n,0)$. 
At each horizontal unit step we can either go one unit up (step $(1,1)$) or down (step $(1,-1)$). 
Such lattice paths are called bridges of length $2n$ starting and ending at height zero, 
and the steps are stemming from so-called Dyck paths. 
Of course, such lattice paths are in bijection with lattice paths on a square grid, starting at $(0,0)$ and ending at $(n,n)$, with allowed steps $(1,0)$ (right) and $(0,1)$ (up).
Apparently, there are $B_{n} = \binom{2n}{n}$ such lattice paths and thus bridges of length $2n$.

Flajolet and Sedgewick~\cite[Example~IX.40, page~707]{FlaSed2009} considered the random variable $X_{n}$, counting the number of visits of the $x$-axis in a random bridge of size $2n$,
i.e., the number of $k$, $1 \le k \le n$, with $(2k,0)$ contained in the bridge, by selecting one of the $B_{n}$ bridges of length $2n$ uniformly at random.
By using a combinatorial decomposition of bridges (the so-called arch decomposition), they have shown that $X_{n}$ follows asymptotically a Rayleigh distribution, i.e., $\frac{X_{n}}{\sqrt{n}} \claw \text{Rayleigh}(\sqrt{2})$.

We consider here a refinement of the r.v.\ $X_{n}$ by introducing the r.v.\ $X_{n,j}$ counting the number of $j$-visits of the $x$-axis, where a $j$-visit is simply a visit after an excursion of length $2j$, i.e., a return to the $x$-axis after exactly $2j$, $j \ge 1$, steps.
Of course, $X_{n} = \sum_{j=1}^{n} X_{n,j}$. 
Figure~\ref{fig:ZeroContacts} illustrates these quantities.
\begin{figure}
\begin{center}
  \includegraphics[height=2.5cm]{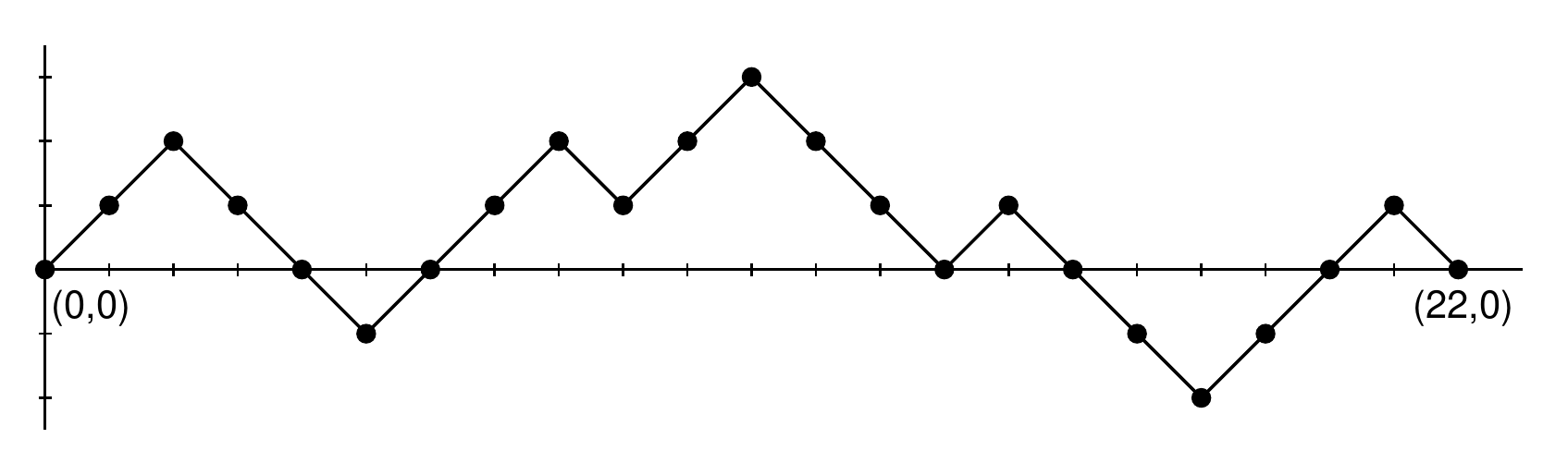}
\end{center}
\caption{A bridge of length $22$ with $6$ visits of the $x$-axis: three $1$-visits, two $2$-visits, and one $4$-visit.\label{fig:ZeroContacts}}
\end{figure}

In order to examine the limiting behaviour of $X_{n,j}$ we start with a combinatorial description of the problem using the before mentioned arch decomposition.
Let $\mathcal{B}$ be the combinatorial family of bridges of length $\ge 0$ and $\mathcal{D}$ be the family of \emph{positive} Dyck path excursions of length $\ge 2$, i.e., Dyck paths of positive length starting and ending on the $x$-axis, where all points in between are above the $x$-axis. Analogous, let $\overline{\mathcal{D}}$ be the family of \emph{negative} Dyck path excursions of length $\ge 2$, i.e., Dyck paths of positive length starting and ending on the $x$-axis, where all points in between are below the $x$-axis.
Then, $\mathcal{B}$ consists of a sequence of positive and negative Dyck path excursions, i.e., it can be described combinatorially by the \textsc{Seq} construction:
\begin{equation}\label{eqn:B_family}
  \mathcal{B} = \text{\textsc{Seq}}(\mathcal{D} \: \dot{\cup} \: \overline{\mathcal{D}}).
\end{equation}
Furthermore, the family $\mathcal{D}$ can be described formally as
\begin{equation}\label{eqn:D_family}
  \mathcal{D} = \nearrow\raisebox{2mm}{\text{\textsc{Seq}$(\mathcal{D})$}}\searrow.
\end{equation}
Let $D_{n}$ be the number of positive Dyck path excursions of length $2n$ and
$D(z) := \sum_{n \ge 1} D_{n} z^{n}$ its generating function; of course, due to symmetry, they coincide with the corresponding quantities for negative Dyck path excursions. Furthermore, let $B(z) := \sum_{n \ge 0} B_{n} z^{n}$ be the generating function of the number of bridges $B_{n}$ of length $2n$. Equations~\eqref{eqn:B_family}-\eqref{eqn:D_family} immediately yield the following equations for the generating functions,
\begin{equation*}
  B(z) = \frac{1}{1-2D(z)} \quad \text{and} \quad D(z) = \frac{z}{1-D(z)},
\end{equation*}
with solutions
\begin{equation*}
  D(z) = \frac{1-\sqrt{1-4z}}{2} \quad \text{and} \quad B(z) = \frac{1}{\sqrt{1-4z}}.
\end{equation*}
Of course, by extracting coefficients we reobtain $B_{n} = \binom{2n}{n}$, whereas the sequence $D_{n} = \frac{1}{n} \binom{2(n-1)}{n-1}$ is enumerated by the (shifted) Catalan numbers.
But more interestingly, the above combinatorial description~\eqref{eqn:B_family}-\eqref{eqn:D_family} can be extended easily to enumerate a suitably introduced bivariate generating function of the distribution of $X_{n,j}$:
\begin{equation*}
  B_{j}(z,v) := \sum_{n \ge 0} \sum_{m \ge 0} B_{n} \mathbb{P}\{X_{n,j} = m\} z^{n} v^{m}.
\end{equation*}
We get then for $j \ge 1$ the solution
\begin{equation}
  B_{j}(z,v) = \frac{1}{1-2\big(D(z) + (v-1) D_{j} z^{j}\big)}
  = \frac{1}{\sqrt{1-4z} - 2 (v-1) D_{j} z^{j}}.
\end{equation}
In order to obtain the factorial moments of $X_{n,j}$ we set $w:=v-1$ and introduce the function
$\tilde{B}_{j}(z,w) := B_{j}(z,w+1)$, thus yielding
\begin{equation*}
  \tilde{B}_{j}(z,w) = \frac{1}{\sqrt{1-4z} - 2 w D_{j} z^{j}}.
\end{equation*}
Extracting coefficients by using standard singularity analysis easily gives
\begin{align*}
  \mathbb{E}(X_{n,j}^{\underline{s}}) & = 
  \frac{s!}{B_{n}} [z^{n} w^{s}] \tilde{B}_{j}(z,w) = 
  \frac{s!}{B_{n}} [z^{n}] \frac{2^{s} D_{j}^{s} z^{js}}{(1-4z)^{\frac{s+1}{2}}}\\
  & = \frac{s! \, 2^{s} D_{j}^{s}}{B_{n}} [z^{n-js}] \frac{1}{(1-4z)^{\frac{s+1}{2}}}
  \sim \frac{s! \, 2^{s} D_{j}^{s} 4^{n-js} (n-js)^{\frac{s-1}{2}}}{B_{n} \, \Gamma(\frac{s+1}{2})}.
\end{align*}
If $j=o(n)$ we further get
\begin{equation*}
  \mathbb{E}(X_{n,j}^{\underline{s}}) \sim \frac{s! \, 2^{s} D_{j}^{s} 4^{n-js} n^{\frac{s-1}{2}}}{B_{n} \, \Gamma(\frac{s+1}{2})},
\end{equation*}
and, together with $B_{n} \sim \frac{4^{n}}{\sqrt{\pi} \sqrt{n}}$ and the duplication formula of the factorials \eqref{eqn:duplication}, this gives
\begin{align*}
  \mathbb{E}(X_{n,j}^{\underline{s}}) & \sim \frac{s! \, 2^{s} D_{j}^{s} 4^{-js} \sqrt{\pi} \, n^{\frac{s}{2}}}{\Gamma(\frac{s+1}{2})}
  = 2^{s} D_{j}^{s} (4^{-j})^{s} n^{\frac{s}{2}} 2^{s} \Gamma\big(\frac{s}{2}+1\big)\\
  & = \Big(\frac{2 \sqrt{2} D_{j} \sqrt{n}}{4^{j}}\Big)^{s} \, 2^{\frac{s}{2}} \, \Gamma\big(\frac{s}{2}+1\big).
\end{align*}
Thus, by an application of Lemma~\ref{MOMSEQMainLemma}, we get the following characterization of the limit law of $X_{n,j}$.
\begin{theorem}
The random variable $X_{n,j}$, counting the number of $j$-visits of the $x$-axis in a random bridge of length $2n$, has, for $n \to \infty$ and arbitrary $1 \le j=j(n) \le n$ with $j=o(n)$, asymptotically factorial moments of mixed Poisson type with a Rayleigh mixing distribution $X$ and scale parameter $\lambda_{n,j} = \frac{2 \sqrt{2} \binom{2(j-1)}{j-1} \sqrt{n}}{j \, 4^{j}}$: 
\[
\E(\fallfak{X_{n,j}}s)= (\lambda_{n,j})^{s} \, 2^{\frac{s}2} \, \Gamma\big(\frac{s}{2}+1\big) \cdot \big(1+o(1)\big).
\]
\begin{itemize}
\item[(i)] for $\lambda_{n,j}\to\infty$, the random variable $\frac{X_{n,j}}{\lambda_{n,j}}$ converges in distribution, with convergence of all moments, to $X$. 

\item[(ii)] for $\lambda_{n,j}\to\rho\in(0,\infty)$, the random variable $X_{n,j}$ converges in distribution, with convergence of all moments, to $Y\law\MPo(\rho X)$. 
\end{itemize}
Moreover, the random variable $Y\law \MPo(\rho X)$ converges, for $\rho\to\infty$, after scaling, to its mixing distribution $X$: $\frac{Y}{\rho}\claw X$, with convergence of all moments.
\end{theorem}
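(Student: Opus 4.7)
The plan is to follow the arch decomposition strategy already used in the excerpt for the total count $X_n$, refining it by marking excursions of length exactly $2j$. First I would introduce a bivariate generating function $B_j(z,v)=\sum_{n,m\ge 0} B_n\,\P\{X_{n,j}=m\}\,z^n v^m$. Starting from $\mathcal{B}=\text{\textsc{Seq}}(\mathcal{D}\,\dot\cup\,\overline{\mathcal{D}})$ and observing that among the positive and negative excursions exactly those of length $2j$ should carry the marker $v$, I expect
\[
B_j(z,v)=\frac{1}{1-2\bigl(D(z)+(v-1)D_j z^j\bigr)}=\frac{1}{\sqrt{1-4z}-2(v-1)D_j z^j},
\]
using $1-2D(z)=\sqrt{1-4z}$.

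Next I would compute the factorial moments by the standard trick of substituting $w=v-1$ and extracting $[w^s]$. Since
\[
[w^s]\frac{1}{\sqrt{1-4z}-2wD_j z^j}=\frac{(2D_j)^s z^{js}}{(1-4z)^{(s+1)/2}},
\]
the $s$-th factorial moment is
\[
\E(\fallfak{X_{n,j}}s)=\frac{s!\,(2D_j)^s}{B_n}\,[z^{n-js}]\frac{1}{(1-4z)^{(s+1)/2}}.
\]
Singularity analysis then gives $[z^{m}](1-4z)^{-(s+1)/2}\sim 4^m m^{(s-1)/2}/\Gamma\!\left(\tfrac{s+1}{2}\right)$, and combined with $B_n\sim 4^n/\sqrt{\pi n}$ and the duplication formula this will produce
\[
\E(\fallfak{X_{n,j}}s)\sim \lambda_{n,j}^s\cdot 2^{s/2}\,\Gamma\!\left(\tfrac{s}{2}+1\right),
\]
with $\lambda_{n,j}=\tfrac{2\sqrt{2}\,D_j\sqrt{n}}{4^j}=\tfrac{2\sqrt{2}\,\binom{2(j-1)}{j-1}\sqrt{n}}{j\,4^j}$, valid for $j=o(n)$ (so that $(n-js)^{(s-1)/2}\sim n^{(s-1)/2}$).

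Once the mixed-Poisson-type expansion of the factorial moments is established, everything else is a direct invocation of Lemma~\ref{MOMSEQMainLemma} with the Rayleigh mixing distribution from Example~\ref{ExampleRayleigh}: the moment sequence $2^{s/2}\Gamma(s/2+1)$ is that of $\text{Rayleigh}(1)$, which satisfies Carleman's condition and is therefore uniquely determined by its moments. The two regimes $\lambda_{n,j}\to\infty$ and $\lambda_{n,j}\to\rho$ then yield the scaled convergence to $X$ and the mixed Poisson limit $\MPo(\rho X)$ respectively, with convergence of all moments. The concluding statement $Y/\rho\claw X$ as $\rho\to\infty$ is the final clause of Lemma~\ref{MOMSEQMainLemma}.

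The only mildly subtle step is justifying the transfer from the exact coefficient expression to the asymptotic one uniformly in $j$ with $j=o(n)$: I need $n-js\to\infty$ for every fixed $s$, and the error term $(1+o(1))$ is uniform in $s$ only after fixing $s$. This is fine since Lemma~\ref{MOMSEQMainLemma} only requires pointwise convergence of each factorial moment. No obstacle here other than keeping track of the duplication formula $\Gamma(s/2+1)\cdot\Gamma((s+1)/2)=s!\sqrt{\pi}/2^s$, which is what converts the singularity-analysis output into the Rayleigh moment shape.
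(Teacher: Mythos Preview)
Your proposal is correct and follows essentially the same route as the paper: the arch decomposition giving $B_j(z,v)=\bigl(\sqrt{1-4z}-2(v-1)D_j z^j\bigr)^{-1}$, the substitution $w=v-1$ and extraction of $[w^s]$, singularity analysis of $(1-4z)^{-(s+1)/2}$, the duplication formula to reshape the constant into $2^{s/2}\Gamma(s/2+1)$, and finally the invocation of Lemma~\ref{MOMSEQMainLemma}. Your remark on the uniformity issue (only pointwise convergence of each factorial moment is needed) is also in line with how the paper handles it.
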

Of course, the result above can be readily adapted to obtain joint distributions for the number of $j$-visits and the total number of visits of the $x$-axis as considered by Flajolet and Sedgewick~\cite{FlaSed2009}; see also Section~\ref{Compositions}. 
Our results can be extended to other bridges with different families of steps (see~\cite{BandFla2002}). One can also study this parameter for modified excursions, as considered in~\cite{BandWallner2014}, and obtain similar results.

\subsection{Cyclic points and trees in graphs of random mappings\label{Ssec:CyclePoints}}
We call a function $f: [n] \to [n]$ from the finite set $[n] := \{1, 2, \dots, n\}$ into itself an \emph{$n$-mapping} (or an \emph{$n$-mapping function}); let us denote by $\mathcal{F}_{n}$ the set of $n$-mappings. When selecting one of the $n^{n}$ $n$-mappings at random (i.e., if we assume that each of the $n^{n}$ $n$-mappings can occur equally likely) one speaks about a random $n$-mapping.
There exists a vast literature (see, e.g., \cite{DS97,DS95,FlaOdl1990} and references therein) devoted to reveal the typical behaviour of important quantities (as, e.g., the number of components, the number of cyclic nodes, etc.) of random $n$-mappings and the corresponding mapping graphs, respectively.

The \emph{mapping graph}, i.e., the \emph{functional digraph}, of an $n$-mapping $f \in \mathcal{F}_{n}$ is the directed graph $G_{f} = (V,E)$ with set of vertices $V = [n]$ and set of directed edges $E=\{(i,f(i)), i \in [n]\}$.
The structure of the mapping graph $G_{f}$ of an arbitrary mapping function $f$ is well known \cite{DS97,FlaOdl1990}: the weakly connected components of $G_{f}$ are cycles of \emph{rooted labelled trees}, i.e., \emph{Cayley-trees}, which means that each connected component consists of rooted labelled trees (with edges oriented towards the root nodes) whose root nodes are connected by directed edges such that they are forming a cycle.

This description allows to interpret a mapping $f$ as a set of cycles of labelled trees. 
Hence, in order to describe the family $\mathcal{F}=\bigcup_{n\ge 0}\mathcal{F}_n$ of all mappings, we can apply the combinatorial constructions \textsc{Set} and \textsc{Cycle} to the family of 
rooted labelled trees $\mathcal{T}$, as introduced and discussed in Section~\ref{Ssec:Records}.
This yields the following combinatorial description of the family of mappings $\mathcal{F}$:
\[
\mathcal{F}=\text{\sc Set}(\text{\sc Cycle}(\mathcal{T})).
\]
Hence, the exponential generating function 
\begin{equation*}
  F(z) := \sum_{n\ge 0}n^n\frac{z^n}{n!} = \sum_{n \ge 0} \tilde{F}_{n} z^{n},
\end{equation*}
with $\tilde{F}_{n} := \frac{n^{n}}{n!}$, of the number of $n$-mappings satisfies
\begin{equation}\label{eqn:Fz_Def}
F(z)=\exp\Big(\log\big(\frac{1}{1-T(z)}\big)\Big)=\frac{1}{1-T(z)},
\end{equation}
where the tree function $T(z)$ is defined in \eqref{eqn:Tree-function}.
Equation~\eqref{eqn:Fz_Def} suggests the alternative combinatorial description of mappings as sequences of rooted labelled trees:
$\mathcal{F}=\text{\textsc{Seq}}(\mathcal{T})$. This can be justified easily by using an analogue of the canonical cycle representation of permutations: for each cycle of trees order the cycle by starting with the tree having the largest root-label amongst all these trees (let us call it the cycle leader) and then rank the different cycles in descending order of the cycle leaders.

The random variable $X_{n}$ counting the number of cyclic points in a random $n$-mapping $f$ (i.e., elements $k \in [n]$, such that there exists a $\ell > 0$ with $k = f^{\ell}(k)$), which of course coincides with the number of rooted labelled trees in the decomposition of the mapping graph $G_{f}$ given above, has been analysed by Drmota and Soria~\cite{DS97}. They have shown a Rayleigh limit law:
$\frac{X_{n}}{\sqrt{n}} \claw \text{Rayleigh}(1)$.
We are considering here a refinement of $X_{n}$, namely, we introduce the random variables $X_{n,j}$, counting the number of trees of size $j$ occurring in the decomposition of the mapping graph $G_{f}$ of a random $n$-mapping $f \in \mathcal{F}_{n}$; of course, it holds $X_{n} = \sum_{j=1}^{n} X_{n,j}$.
The quantities considered are visualized in Figure~\ref{fig:Mappings}.
\begin{figure}
\begin{center}
  \includegraphics[height=3.5cm]{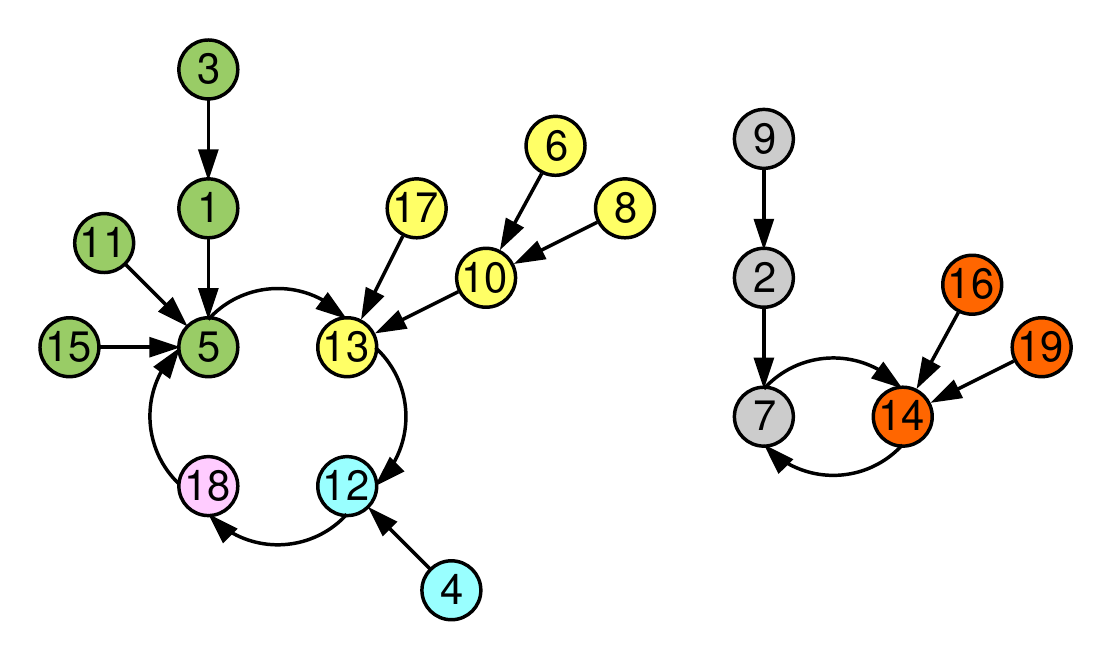}
\end{center}
\caption{The mapping graph $G_{f}$ of a $19$-mapping function $f$ has $6$ cyclic points, i.e., decomposes into $6$ trees, one tree of size $1$, one tree of size $2$, two trees of size $3$ and two trees of size $5$.\label{fig:Mappings}}
\end{figure}

When introducing a suitable generating function of the distribution of $X_{n,j}$ via $F_{j}(z,v) := \sum_{n \ge 0} \sum_{m \ge 0} \tilde{F}_{n} \mathbb{P}\{X_{n,j}=m\} z^{n} v^{m}$, the combinatorial decomposition of the family of mappings $\mathcal{F}$ as given above immediately yields an explicit formula for $F_{j}(z,v)$, $j \ge 1$:
\begin{equation*}
  F_{j}(z,v) = \frac{1}{1-\big( T(z) - \tilde{T}_{j} z^{j} + v \tilde{T}_{j} z^{j}\big)} = \frac{1}{1-T(z)-(v-1)\tilde{T}_{j}z^{j}}.
\end{equation*}
Introducing $w:=v-1$ and $\tilde{F}_{j}(z,w) := F(z,w+1)$ gives
\begin{equation*}
  \tilde{F}_{j}(z,w) = \frac{1}{1-T(z)-w\tilde{T}_{j} z^{j}},
\end{equation*}
from which the factorial moments of $X_{n,j}$ can be obtained easily by extracting coefficients.
This can be done completely analogous to the computations in Section~\ref{Ssec:Records} leading for $j=o(n)$ to the following result:
\begin{equation*}
  \mathbb{E}(X_{n,j}^{\underline{s}}) = \frac{s!}{\tilde{F}_{n}} [z^{n} w^{s}] \tilde{F}_{j}(z,w) \sim \Big(\frac{\tilde{T}_{j} \sqrt{n}}{e^{j}}\Big)^{s} \, 2^{\frac{s}{2}} \, \Gamma\big(\frac{s}{2}+1\big),
\end{equation*}
which, together with Lemma~\ref{MOMSEQMainLemma}, shows the theorem stated below.
\begin{theorem}
The random variable $X_{n,j}$, counting the number of trees of size $j$ occurring in the decomposition of the mapping graph of a random $n$-mapping, has, for $n \to \infty$ and arbitrary $1 \le j=j(n) \le n$ with $j=o(n)$, asymptotically factorial moments of mixed Poisson type with a Rayleigh mixing distribution $X$ and scale parameter $\lambda_{n,j} = \frac{j^{j-1} \sqrt{n}}{j! e^{j}}$: 
\[
\E(\fallfak{X_{n,j}}s)= (\lambda_{n,j})^{s} \, 2^{\frac{s}{2}} \, \Gamma\big(\frac{s}{2}+1\big) \cdot \big(1+o(1)\big).
\]
\begin{itemize}
\item[(i)] for $\lambda_{n,j}\to\infty$, the random variable $\frac{X_{n,j}}{\lambda_{n,j}}$ converges in distribution, with convergence of all moments, to $X$. 

\item[(ii)] for $\lambda_{n,j}\to\rho\in(0,\infty)$, the random variable $X_{n,j}$ converges in distribution, with convergence of all moments, to $Y\law\MPo(\rho X)$. 
\end{itemize}
Moreover, the random variable $Y\law \MPo(\rho X)$ converges, for $\rho\to\infty$, after scaling, to its mixing distribution $X$: $\frac{Y}{\rho}\claw X$, with convergence of all moments.
\end{theorem}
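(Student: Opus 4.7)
The generating-function setup is already in place in the text: we have
\[
 \tilde F_j(z,w)=\frac{1}{1-T(z)-w\tilde T_j z^{j}},
\]
so the plan is simply to extract $[z^n w^s]$ and apply Lemma~\ref{MOMSEQMainLemma}. Expanding as a geometric series in the variable $w\tilde T_j z^j/(1-T(z))$ immediately gives
\[
 [w^s]\tilde F_j(z,w)=\frac{\tilde T_j^{\,s}\,z^{js}}{(1-T(z))^{s+1}},\qquad
 [z^n w^s]\tilde F_j(z,w)=\tilde T_j^{\,s}\,[z^{n-js}]\frac{1}{(1-T(z))^{s+1}}.
\]
This is exactly the same kind of coefficient one had to handle in the proof of Theorem~\ref{the:Records}, so I would reuse the singularity-analysis estimate~\eqref{eqn:znTz} obtained there from the branch-point expansion $T(z)=1-\sqrt{2}\sqrt{1-ez}+\mathcal O(1-ez)$.

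For $j=o(n)$ I would write $n-js=n(1-o(1))$, substitute~\eqref{eqn:znTz}, and combine with the classical Stirling estimate $\tilde F_n=n^n/n!\sim e^n/\sqrt{2\pi n}$. This yields
\[
 \E\bigl(\fallfak{X_{n,j}}{s}\bigr)=\frac{s!}{\tilde F_n}[z^n w^s]\tilde F_j(z,w)
 =\Bigl(\frac{\tilde T_j\sqrt n}{e^{j}}\Bigr)^{\!s}\frac{s!\sqrt{2\pi}}{2^{(s+1)/2}\,\bigl(\tfrac{s-1}{2}\bigr)!}\bigl(1+o(1)\bigr).
\]
A routine application of the factorial duplication formula~\eqref{eqn:duplication} collapses the constant to $2^{s/2}\,\Gamma(s/2+1)$, giving exactly the claimed expansion with $\lambda_{n,j}=\tilde T_j\sqrt n/e^{j}=j^{j-1}\sqrt n/(j!\,e^{j})$.

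With the mixed-Poisson expansion of the factorial moments in hand, the two distributional statements (i) and (ii) follow directly from Lemma~\ref{MOMSEQMainLemma}, provided the Rayleigh mixing distribution satisfies Carleman's condition; but this is immediate from the moments $\E(X^s)=\sigma^s\,2^{s/2}\Gamma(s/2+1)$ given in Example~\ref{ExampleRayleigh} (equivalently, $\psi(z)=\E(e^{zX})$ exists in a neighbourhood of zero, cf.\ Lemma~\ref{MOMSEQLemmaUniqueness}), and hence also the final assertion $Y/\rho\claw X$ for $\rho\to\infty$ follows from the last part of Lemma~\ref{MOMSEQMainLemma}. No genuine obstacle is expected: the only mildly delicate point is bookkeeping the $j$-dependent error term in $[z^{n-js}](1-T(z))^{-(s+1)}$ to make sure the estimate is $(1+o(1))$ uniformly over $s$ fixed and $1\le j\le n$ with $j=o(n)$, which is handled exactly as in the analogous step of the proof of Theorem~\ref{the:Records}.
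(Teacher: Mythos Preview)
Your proposal is correct and follows essentially the same route as the paper: extract $[w^s]$ from $\tilde F_j(z,w)$ to reduce to the coefficient $[z^{n-js}](1-T(z))^{-(s+1)}$, import the singularity-analysis estimate~\eqref{eqn:znTz} from the proof of Theorem~\ref{the:Records}, simplify via Stirling and the duplication formula~\eqref{eqn:duplication}, and then invoke Lemma~\ref{MOMSEQMainLemma}. The paper itself explicitly points to the computations in Section~\ref{Ssec:Records} for this step, so your write-up is exactly what is intended.
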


\section{Multivariate mixed Poisson distributions\label{SecMMPD}}
Definition~\ref{MOMSEQdef1} readily extends to multivariate distributions, compare with~\cite{Ferrari}.
\begin{defi}
\label{MOMSEQdef2}
Let $(X_1,\dots,X_m)$ denote a random vector with non-negative components and cumulative distribution function $\boldsymbol{\Lambda}(.)$ and $\rho_1,\dots \rho_m>0$ scale parameters. 
The discrete random vector $(Y_1,\dots,Y_m)$ with joint probability mass function given by  
\[
\P\{Y_1=\ell_1,\dots Y_m=\ell_m\}=\frac{\rho_1^{\ell_1}\dots\rho_m^{\ell_m}}{\ell_1!\dots\ell_m!}
\int_{(\R^+)^{m}}X_1^{\ell_1}\dots X_m^{\ell_m}e^{-\sum_{j=1}^{m}\rho_jX_j}d\boldsymbol{\Lambda},
\]%
$\ell_1,\dots,\ell_m\ge 0$, has a multivariate mixed Poisson distribution with mixing distribution $(X_1,\dots,X_m)$ and scale parameters $\rho_1,\dots,\rho_m$.
\end{defi}

Relation~\eqref{MOMSEQ-eqn2} for the moments of $X$ and $Y$ given there extends to the multivariate case in the following way:
\begin{equation}
\label{MOMSEQ-eqn3}
\E\big(\fallfak{Y_1}{s_1}\dots \fallfak{Y_m}{s_m}\big)=\rho_1^{s_1}\dots \rho_m^{s_m}\mu_{s_1,\dots,s_m},
\end{equation}
where $\mu_{s_1,\dots,s_m}=\E(X_1^{s_1}\cdots X_m^{s_m})$, for $s_1,\dots,s_m\ge 0$; this can readily be seen by a direct computation.

Similar to Proposition~\ref{MOMSEQthe1} we obtain the following result, if the distribution of the random vector $(X_1,\dots,X_m)$ is uniquely determined by the sequence of its (mixed) moments.

\begin{prop}
Let $\mathbf{X}=(X_1,\dots,X_m)$ denote a random vector determined by its sequence of mixed moments $(\mu_{s_1,\dots,s_m})_{s_1,\dots,s_m\in\N_0}$, 
assuming that the moment generating function $\psi(\mathbf{z})=\E(e^{\mathbf{z}\mathbf{X}})$ of $\mathbf{X}$ 
exists in a neighbourhood of $\boldsymbol{0}$ including $-\boldsymbol{\rho}$. Then, the random vector $\mathbf{Y}=(Y_1,\dots,Y_m)$ with mixed factorial moments given by~\eqref{MOMSEQ-eqn3} has a multivariate mixed Poisson distribution with mixing distribution $\mathbf{X}$ and scale parameters $\rho_1,\dots \rho_m>0$. The moment generating function
$\varphi(\mathbf{z})=\E(e^{\mathbf{z}\mathbf{Y}})$ is given by the Stirling transform of $\psi(\mathbf{z})$,
\[
\varphi(\mathbf{z})=\big(\psi(\mathbf{z})\big)\Big|_{\displaystyle{z_1=\rho_1(e^{z_1}-1),\dots,z_m=\rho_m(e^{z_m}-1)}},
\]
and the probability mass function of $\mathbf{Y}$ satisfies
\[
\P\{Y_1=\ell_1,\dots, Y_m=\ell_m\}= \sum_{\ell_1\ge j_1,\dots,\ell_m\ge j_m}\mu_{j_1,\dots,j_m}\prod_{i=1}^{m}
\binom{j_i}{\ell_i}(-1)^{j_i-\ell_i}\frac{\rho_i^{j_i}}{j_i!},
\]
for $\ell_1,\dots,\ell_m\ge 0$.
\end{prop}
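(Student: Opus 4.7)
The plan is to mirror the proof of Proposition~\ref{MOMSEQthe1} coordinate-wise, proceeding in three steps: (i) verify that a random vector distributed as in Definition~\ref{MOMSEQdef2} indeed satisfies the mixed factorial moment relation~\eqref{MOMSEQ-eqn3}, (ii) derive the closed forms for the moment generating function $\varphi(\mathbf{z})$ and the probability mass function of $\mathbf{Y}$, and (iii) argue that the distribution determined by $(\mu_{s_1,\dots,s_m})$ via~\eqref{MOMSEQ-eqn3} is unique, so that $\mathbf{Y}$ must be the distribution described by Definition~\ref{MOMSEQdef2}.

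For step (i), I would start from
\begin{equation*}
\E\big(\fallfak{Y_1}{s_1}\cdots\fallfak{Y_m}{s_m}\big)=\sum_{\ell_1,\dots,\ell_m\ge 0}\prod_{i=1}^{m}\fallfak{\ell_i}{s_i}\,\P\{Y_1=\ell_1,\dots,Y_m=\ell_m\},
\end{equation*}
insert the integral representation from Definition~\ref{MOMSEQdef2}, interchange sum and integral (justified by absolute convergence, which follows from the assumed existence of $\psi$ at $-\boldsymbol{\rho}$), and use the identity $\sum_{\ell\ge s}\fallfak{\ell}{s}\frac{(\rho x)^{\ell}}{\ell!}e^{-\rho x}=(\rho x)^s$ in each coordinate. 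This collapses the expression to $\rho_1^{s_1}\cdots\rho_m^{s_m}\int_{(\R^+)^m}X_1^{s_1}\cdots X_m^{s_m}\,d\boldsymbol{\Lambda}=\rho_1^{s_1}\cdots\rho_m^{s_m}\mu_{s_1,\dots,s_m}$, exactly as required.

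For step (ii), the computation of $\varphi(\mathbf{z})$ is a direct generalization of the univariate calculation in the proof of Proposition~\ref{MOMSEQPropEigenschaften}~(c):
\begin{equation*}
\varphi(\mathbf{z})=\sum_{\ell_1,\dots,\ell_m\ge 0}e^{\sum_i z_i\ell_i}\,\P\{Y_1=\ell_1,\dots,Y_m=\ell_m\}
=\int_{(\R^+)^m}\prod_{i=1}^{m}\Big(\sum_{\ell_i\ge 0}\frac{(\rho_i X_i e^{z_i})^{\ell_i}}{\ell_i!}\Big)e^{-\rho_i X_i}\,d\boldsymbol{\Lambda},
\end{equation*}
which sums to $\int_{(\R^+)^m}\exp\!\big(\sum_i \rho_i(e^{z_i}-1)X_i\big)d\boldsymbol{\Lambda}=\psi\big(\rho_1(e^{z_1}-1),\dots,\rho_m(e^{z_m}-1)\big)$, valid for $\mathbf{z}$ in a neighbourhood of $\boldsymbol{0}$. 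For the pmf, I would combine the definition $\P\{\mathbf{Y}=\boldsymbol{\ell}\}=\frac{\rho_1^{\ell_1}\cdots\rho_m^{\ell_m}}{\ell_1!\cdots\ell_m!}\E(X_1^{\ell_1}\cdots X_m^{\ell_m}e^{-\sum_i\rho_i X_i})$ with the observation that
\begin{equation*}
\E\big(X_1^{\ell_1}\cdots X_m^{\ell_m}e^{-\sum_i\rho_i X_i}\big)=\partial_{z_1}^{\ell_1}\cdots\partial_{z_m}^{\ell_m}\psi(\mathbf{z})\Big|_{\mathbf{z}=-\boldsymbol{\rho}},
\end{equation*}
then expand $\psi(\mathbf{z})=\sum_{j_1,\dots,j_m\ge 0}\mu_{j_1,\dots,j_m}\prod_i z_i^{j_i}/j_i!$ in its Taylor series about $\boldsymbol{0}$, differentiate termwise, and read off the stated alternating sum after substituting $\mathbf{z}=-\boldsymbol{\rho}$.

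For step (iii), the hypothesis that $\psi$ exists in a neighbourhood of $\boldsymbol{0}$ makes $\psi$ analytic in a polydisc, so the distribution of $\mathbf{X}$ is uniquely pinned down by $(\mu_{s_1,\dots,s_m})$ (this is the standard multivariate analogue of the condition used in the proof of Proposition~\ref{MOMSEQthe1}; it follows, for instance, from the fact that the characteristic function $\psi(i\mathbf{t})$ is then determined). Combined with the formula $\varphi(\mathbf{z})=\psi(\rho_1(e^{z_1}-1),\dots,\rho_m(e^{z_m}-1))$ derived in step (ii), this shows that $\varphi$ is analytic in a neighbourhood of $\boldsymbol{0}$, so the joint distribution of any vector with factorial moments~\eqref{MOMSEQ-eqn3} is uniquely determined; it must therefore coincide with the multivariate mixed Poisson distribution of Definition~\ref{MOMSEQdef2}. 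The main subtlety to watch for is justifying the multivariate uniqueness in step (iii): in the univariate case Carleman's criterion suffices, while here the cleanest route is to invoke joint analyticity of $\psi$ rather than attempt a marginal-by-marginal Carleman argument, which would not automatically transfer to the joint law.
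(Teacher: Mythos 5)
Your proposal is correct and follows essentially the same route as the paper, which likewise proceeds "similarly to the proof of Proposition~\ref{MOMSEQthe1}": identify $\varphi(\mathbf{z})$ with the multivariate Stirling transform of $\psi(\mathbf{z})$, conclude from analyticity of $\varphi$ near $\boldsymbol{0}$ that the factorial moments determine the law uniquely, and obtain the pmf by differentiating the Taylor expansion of $\psi$ at $-\boldsymbol{\rho}$. The only (immaterial) difference is that the paper computes $\varphi(\mathbf{z})$ directly from the prescribed power moments via the Stirling-number generating function~\eqref{MOMSEQWilf}, whereas you compute it by integrating the Poisson kernels in the representation of Definition~\ref{MOMSEQdef2}; both computations already appear in the univariate Proposition~\ref{MOMSEQPropEigenschaften}~(c).
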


\begin{proof}
We proceed similarly to the proof of Proposition~\ref{MOMSEQthe1}. Using 
\[
Y_1^{s_1}\dots Y_{m}^{s_m}=\sum_{\ell_1=0}^{s_1}\dots\sum_{\ell_m=0}^{s_m}\bigg(\prod_{i=1}^{m}\fallfak{Y_i}{s_i}\bigg)
\]
and~\eqref{MOMSEQWilf}, the moment generating function of $\mathbf{Y}$ is readily computed:
\begin{align*}
\varphi(\mathbf{z}) & = \sum_{s_1,\dots,s_m\ge 0}\E(Y_1^{s_1}\dots Y_{m}^{s_m})\frac{z_1^{s_1}\dots z_m^{s_m}}{s_1!\dots s_m!}\\
& = \sum_{\ell_1,\dots,\ell_m\ge 0}\mu_{\ell_1,\dots,\ell_m}\prod_{i=1}^{m}\Big(\rho_i(e^{z_i}-1)\Big)^{\ell_i}.
\end{align*}
Moreover, it coincides with the Stirling transform of $\psi(\mathbf{z})$. This proves that the random vector has a multivariate mixed Poisson law, 
since $\varphi(\mathbf{z})$ is analytic in a neighbourhood of $\boldsymbol{0}$. Moreover, the probability mass function is obtained according to
\[
\P\{Y_1=\ell_1,\dots, Y_m=\ell_m\}= \frac{\rho_1^{\ell_1}\dots\rho_m^{\ell_m}}{\ell_1!\dots\ell_m!}\cdot\bigg(\frac{\partial^{\sum_{k=1}^{m}\ell_k}}{\partial z_1^{\ell_1}\dots \partial z_m^{\ell_m}}\varphi(\mathbf{z})\bigg)\bigg|_{\mathbf{z}=-\boldsymbol{\rho}}.
\]
\end{proof}

Moreover, we note that the basic limit theorem for the univariate case given in Lemma~\ref{MOMSEQMainLemma} can be readily extended to limit laws for random vectors. 

\section{Outlook and extensions}
\subsection{Mixed Poisson distributions in Analytic Combinatorics - compositions\label{Compositions}}
Looking at the examples in Sections~\ref{Ssec:Records} and ~\ref{Ssec:Dyck} leading 
to mixed Poisson laws with Rayleigh mixing distribution, and the example in Section~\ref{MOMSEQExampleBlocks} from~\cite{PanKuCPC}, it is desirable to find a unifying combinatorial scheme leading to mixed Poisson distributions. The generating functions appearing in the examples mentioned aforehand indicate how to do so: we
can use the critical compositions (see Flajolet and Sedgewick~\cite[Proposition IX.24, page~712]{FlaSed2009}, based on the pioneering works of Soria et al.~\cite{BFSS01,DS97,DS95,FS90,FS91,FS93}.
Assumed that generating functions $G(z)$ and $H(z)$ are the counting series of certain combinatorial 
families. We are interested in compositions of generating functions of the form $G(H(z))$. Combinatorially, this amounts to a substitution between structures of the form $\mathcal{F}=\mathcal{G}\circ\mathcal{H}$, We measure the size of the so-called core $X_n$, and additionally taking into account the contribution of parts of size $j$ to the core:
\[
F(z,u,v)=G\big(u(H(z)-(v-1)H_j z^j)\big).
\] 
Here the variable $u$ marks as usual the total size of the so-called core $X_n$, 
\[
\P\{X_{n}=m\}=\frac{[z^nu^k]F(z,u,1)}{[z^n] G(H(z))},
\]
and the new variable $v$ marks the contribution of parts of size $j$ measured by the random variable $X_{n,j}$ to the core,
\[
\P\{X_{n,j}=m\}=\frac{[z^n v^j]F(z,1,v)}{[z^n] G(H(z))},
\]
such that $X_n=\sum_{j=1}^n X_{n,j}$. Using the semi-large power theorem stated in~\cite[Theorem IX.16, page~709]{FlaSed2009}, one can study this $j$-part core $X_{n,j}$. More generally, it is desirable to study the joint distributions $(X_n;X_{n,j_1},\dots,X_{n,j_k})$ via
\[
F(z,u,\mathbf{v})=G\big(u(H(z)-\sum_{\ell=1}^{k}(v_\ell-1)H_{j_\ell}z^{j_\ell})\big).
\] 
We will report on our findings on this refined analysis of compositions elsewhere~\cite{KuCo}.

\subsection{Extensions and open problems}
Several of the results presented in this work can be extended to a multivariate analysis involving a random vector $\mathbf{X}_{n,\mathbf{j}}=(X_{n,j_1},\dots,X_{n,j_r})$ by its mixed moments.
It should be possible, at least for some of the examples presented, to use mixed Poisson approximation techniques to derive distances, i.e., the total variation distance, between the random
variables of interest and the corresponding mixed Poisson distributions, refining the results stated in this work.

\smallskip

A natural question is to ask for a direct explanation of the critical growth rates of the second parameter, say $j$, when the limit laws of the random variables $X_{n,j}$ change from the law of the mixing distribution $X$ to the mixed Poisson distributions $\MPo(\rho X)$. 

\section*{Acknowledgements}
The authors thank Svante Janson for very helpful remarks on mixed Poisson distributions, in particular regarding notation and the correct approach to Lemma~\ref{MOMSEQLemmaUniqueness}. Furthermore, the authors warmly thank Walter Kuba for several motivating discussions and for providing a concise argument used in the series related part of Lemma~\ref{MOMSEQLemmaUniqueness}.

\end{document}